\let\old@tocline\@tocline
\let\section@tocline\@tocline
\newcommand{\subsection@dotsep}{4.5}
\newcommand{\subsubsection@dotsep}{4.5}
     \leaders\hbox{$\m@th
        \mkern \subsection@dotsep mu\hbox{.}\mkern \subsection@dotsep mu$}\hfill
\let\subsection@tocline\@tocline
\let\@tocline\old@tocline
     \leaders\hbox{$\m@th
        \mkern \subsubsection@dotsep mu\hbox{.}\mkern \subsubsection@dotsep mu$}\hfill
\let\subsubsection@tocline\@tocline
\let\@tocline\old@tocline
\let\old@l@subsection\l@subsection
\let\old@l@subsubsection\l@subsubsection
\def\@tocwriteb#1#2#3{%
  \begingroup
    \@xp\def\csname #2@tocline\endcsname##1##2##3##4##5##6{%
      \ifnum##1>\c@tocdepth
      \else \sbox\z@{##5\let\indentlabel\@tochangmeasure##6}\fi}%
    \csname l@#2\endcsname{#1{\csname#2name\endcsname}{\@secnumber}{}}%
  \endgroup
  \addcontentsline{toc}{#2}%
    {\protect#1{\csname#2name\endcsname}{\@secnumber}{#3}}}%
\newlength{\@tocsectionindent}
\newlength{\@tocsubsectionindent}
\newlength{\@tocsubsubsectionindent}
\newlength{\@tocsectionnumwidth}
\newlength{\@tocsubsectionnumwidth}
\newlength{\@tocsubsubsectionnumwidth}
\newcommand{\settocsectionnumwidth}[1]{\setlength{\@tocsectionnumwidth}{#1}}
\newcommand{\settocsubsectionnumwidth}[1]{\setlength{\@tocsubsectionnumwidth}{#1}}
\newcommand{\settocsubsubsectionnumwidth}[1]{\setlength{\@tocsubsubsectionnumwidth}{#1}}
\newcommand{\settocsectionindent}[1]{\setlength{\@tocsectionindent}{#1}}
\newcommand{\settocsubsectionindent}[1]{\setlength{\@tocsubsectionindent}{#1}}
\newcommand{\settocsubsubsectionindent}[1]{\setlength{\@tocsubsubsectionindent}{#1}}
\renewcommand{\l@section}{\section@tocline{1}{\@tocsectionvskip}{\@tocsectionindent}{\@tocsectionnumwidth}{\@tocsectionformat}}%
\renewcommand{\l@subsection}{\subsection@tocline{1}{\@tocsubsectionvskip}{\@tocsubsectionindent}{\@tocsubsectionnumwidth}{\@tocsubsectionformat}}%
\renewcommand{\l@subsubsection}{\subsubsection@tocline{1}{\@tocsubsubsectionvskip}{\@tocsubsubsectionindent}{\@tocsubsubsectionnumwidth}{\@tocsubsubsectionformat}}%
\newcommand{\@tocsectionformat}{}
\newcommand{\@tocsubsectionformat}{}
\newcommand{\@tocsubsubsectionformat}{}
\def\csname toc@1format\endcsname{\@tocsectionformat}
\def\csname toc@2format\endcsname{\@tocsubsectionformat}
\def\csname toc@3format\endcsname{\@tocsubsubsectionformat}
\newcommand{\settocsectionformat}[1]{\renewcommand{\@tocsectionformat}{#1}}
\newcommand{\settocsubsectionformat}[1]{\renewcommand{\@tocsubsectionformat}{#1}}
\newcommand{\settocsubsubsectionformat}[1]{\renewcommand{\@tocsubsubsectionformat}{#1}}
\newlength{\@tocsectionvskip}
\newcommand{\settocsectionvskip}[1]{\setlength{\@tocsectionvskip}{#1}}
\newlength{\@tocsubsectionvskip}
\newcommand{\settocsubsectionvskip}[1]{\setlength{\@tocsubsectionvskip}{#1}}
\newlength{\@tocsubsubsectionvskip}
\newcommand{\settocsubsubsectionvskip}[1]{\setlength{\@tocsubsubsectionvskip}{#1}}
\patchcmd{\tocsection}{\indentlabel}{\makebox[\@tocsectionnumwidth][l]}{}{}
\patchcmd{\tocsubsection}{\indentlabel}{\makebox[\@tocsubsectionnumwidth][l]}{}{}
\patchcmd{\tocsubsubsection}{\indentlabel}{\makebox[\@tocsubsubsectionnumwidth][l]}{}{}
\newcommand{\@sectypepnumformat}{}
\renewcommand{\contentsline}[1]{%
  \expandafter\let\expandafter\@sectypepnumformat\csname @toc#1pnumformat\endcsname%
  \csname l@#1\endcsname}
\newcommand{\@tocsectionpnumformat}{}
\newcommand{\@tocsubsectionpnumformat}{}
\newcommand{\@tocsubsubsectionpnumformat}{}
\newcommand{\setsectionpnumformat}[1]{\renewcommand{\@tocsectionpnumformat}{#1}}
\newcommand{\setsubsectionpnumformat}[1]{\renewcommand{\@tocsubsectionpnumformat}{#1}}
\newcommand{\setsubsubsectionpnumformat}[1]{\renewcommand{\@tocsubsubsectionpnumformat}{#1}}
\renewcommand{\@tocpagenum}[1]{%
  \hfill {\mdseries\@sectypepnumformat #1}}
\let\oldappendix\appendix
\renewcommand{\appendix}{%
  \leavevmode\oldappendix%
  \addtocontents{toc}{%
    \protect\settowidth{\protect\@tocsectionnumwidth}{\protect\@tocsectionformat\sectionname\space}%
    \protect\addtolength{\protect\@tocsectionnumwidth}{2em}}%
}
\let\oldtableofcontents\tableofcontents
\renewcommand{\tableofcontents}{%
  \vspace*{-5\linespacing}
  \oldtableofcontents}
\let\origsection=\section \def\section{\@ifstar{\origsection*}{\mysection}} 
\def\mysection{\@startsection{section}{1}\z@{.7\linespacing\@plus\linespacing}{.5\linespacing}{\normalfont\scshape\centering\S}}
\colorlet{darkishRed}{red!60!black}
\colorlet{darkishBlue}{blue!60!black}
\colorlet{darkishGreen}{green!50!black}
\colorlet{darkerishGreen}{green!30!black}
\colorlet{lightishGreen}{green!70!black}
\crefname{mainresult}{Theorem}{Theorems}
\let\setminus=\smallsetminus
\newcommand{\COMMENT}[1]{{}}
\let\setminus=\smallsetminus
\renewcommand{\leq}{\leqslant}
\renewcommand{\geq}{\geqslant}
\renewcommand{\ge}{\geq}
\renewcommand{\le}{\leq}
\let\rho=\varrho
\let\phi=\varphi
\newcommand{\id}{\normalfont\text{id}}
\DeclareMathOperator{\Aut}{Aut}
\newcommand{ \N } { \mathbb{N} }
\newcommand{ \Z } { \mathbb{Z} }
\newcommand{\defn}[1]{{\color{darkishGreen}{\emph{#1}}}}
\newcommand{\mathdefn}[1]{{\color{darkishGreen}{{#1}}}}
\def\calCommandfactory#1{%
   \expandafter\def\csname c#1\endcsname{\mathcal{#1}}}
\def\frakCommandfactory#1{%
   \expandafter\def\csname frak#1\endcsname{\mathfrak{#1}}}
\newcounter{ctr}
  \edef\X{\@Alph\c@ctr}
  \edef\Y{\@alph\c@ctr}
\newcommand{\bT}{\mathbf{T}}
\newtheorem{theorem}{Theorem}[section] 
\newtheorem{proposition}[theorem]{Proposition}
\newtheorem{corollary}[theorem]{Corollary}
\newtheorem{lemma}[theorem]{Lemma}
\newtheorem{claim}{Claim}
\crefname{claim}{Claim}{Claims}
\theoremstyle{definition}
\newtheorem{example}[theorem]{Example}
\newtheorem*{definition*}{Definition}
\newtheorem{construction}[theorem]{Construction}
\theoremstyle{remark}
\newcommand{\pdfOrNot}[2]{\ifbool{pdfBool}{{#1}}{{#2}}}
\def\lqedsymbol{\ifmmode$\lrcorner$\else{\unskip\nobreak\hfil
		\penalty50\hskip1em\null\nobreak\hfil$\rule{1.2ex}{1.2ex}$
		\parfillskip=0pt\finalhyphendemerits=0\endgraf}\fi}
\newenvironment{claimproof}[1][\proofname]
{%
	\proof[#1]%
}
{%
	\endproof%
}
\newcommand{\td}{tree-decom\-pos\-ition}
\newcommand{\gd}{graph-decom\-pos\-ition}
\begin{document}

\title{Canonical tree-decompositions of chordal graphs}

\author[R.\ W.\ Jacobs \and P.\ Knappe]{Raphael W. Jacobs, Paul Knappe}
\address{Universität Hamburg, Department of Mathematics, Bundesstraße 55 (Geomatikum), 20146 Hamburg, Germany}
\email{\{raphael.jacobs, paul.knappe\}@uni-hamburg.de}

\thanks{R.W.J. is supported by doctoral scholarships of the Studienstiftung des deutschen Volkes and the Cusanuswerk -- Bisch\"{o}fliche Studienf\"{o}rderung. P.K. is supported by a doctoral scholarship of the Studienstiftung des Deutschen Volkes.}

\keywords{Chordal, canonical, tree-decomposition, clique tree, locally chordal, graph-decomposition}
\subjclass[2020]{05C83, 05C69, 05C40, 05C38}

\begin{abstract}
    We show that a locally finite, connected graph~$G$ is $r$-locally chordal (that is, its $r/2$-balls are chordal) if and only if the unique canonical graph-decomposition~$\cH_r(G)$ of~$G$ displaying its $r$-global structure is into cliques.
    Our proof relies on a canonical version of Halin's characterization of chordal locally finite graphs as those that admit a tree-decomposition into cliques:
    We show that such tree-decompositions can be chosen to be canonical, that is, so that they are invariant under all the graph's automorphisms.
\end{abstract}

\maketitle

\section{Introduction}

\subsection{Characterising \texorpdfstring{$r$}{r}-locally chordal graphs}

A recent development in graph minor theory aims to study the local and global aspects of a graph's connectivity structure through the lens of `local coverings'~\cite{canonicalGD,computelocalSeps}.
Here, the \emph{$r$-local covering} \hbox{$p_r: G_r \to G$} of a graph~$G$ is a (topological) covering of~$G$ that preserves all $r$-local structure of~$G$ -- that is, its $r/2$-balls -- while unfolding all remaining structure in a tree-like way (see~\cref{sec:Background:LocalCoverings} for the formal definition).
In this definition, the integer parameter $r \ge 0$ specifies the intended degree of locality.

Diestel, Jacobs, Knappe and Kurkofka use $r$-local coverings to bring tangle theory from graph minors to bear canonically on arbitrary graphs, which need not be tree-like~\cite{canonicalGD}.
Given a graph $G$ and an integer $r > 0$, they construct a canonical graph-decomposition $\cH_r(G)$ of $G$ that displays the $r$-global structure of~$G$.
Here, \emph{graph-decompositions} generalise tree-decompositions in that the decomposition tree may take the form of an arbitrary graph (see~\cref{sec:Background:GraphDecsAndCoverings} for the formal definition).
Further, a graph-decomposition is \emph{canonical} if it is invariant under the automorphisms of the graph.

\begin{restatable}{theorem}{CanGraphDec}\emph{({\cite[Theorem 1]{canonicalGD}})} \label{thm:CanGraphDec}
    Let $G$ be a finite graph, and $r > 0$ an integer.
    Then $G$ has a unique canonical decomposition~$\cH_r(G)$ modelled on another finite graph $H = H(G, r)$ that displays its $r$-global structure.
\end{restatable}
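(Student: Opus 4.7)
The plan is to realize $\cH_r(G)$ as the projection of a canonical tree-decomposition of the $r$-local covering space $G_r$, following precisely the blueprint sketched in the paragraph preceding the statement. Since $G$ is finite, $G_r$ is locally finite (at every vertex, $p_r$ is a local isomorphism on the relevant ball), and the deck transformation group $\Gamma(p_r) \leq \Aut(G_r)$ acts freely on $G_r$ with quotient $G$.

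First, I would construct a $\Gamma(p_r)$-canonical tree-decomposition $\cT(G_r) = (T, \cV)$ of $G_r$ that captures its global structure at the appropriate scale. The natural tool is tangle theory: set up the family of low-order separations of $G_r$ and invoke the canonical tangle-tree theorem (in the style of Carmesin--Diestel--Hamann--Hundertmark) to obtain a tree-decomposition that efficiently distinguishes all tangles or profiles of the chosen order. Because the construction is defined purely in graph-theoretic terms on $G_r$, it is automatically invariant under $\Aut(G_r)$, hence in particular under $\Gamma(p_r)$.

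Second, I would re-fold $\cT(G_r)$ along $p_r$. Since $\cT(G_r)$ is $\Gamma(p_r)$-invariant, the quotient $H := T/\Gamma(p_r)$ is a well-defined graph, and the orbits of bags give a family $(W_h)_{h \in V(H)}$ of subsets of $V(G)$ via $p_r$. I would verify that $\cH_r(G) := (H, (W_h)_{h \in V(H)})$ is a graph-decomposition of $G$; that $H$ is finite, which uses that $\Gamma(p_r)$ acts cofinitely on $T$ (itself a consequence of finiteness of $G$ and of the finite-order-ness of the separations used); and that $\cH_r(G)$ is $\Aut(G)$-canonical, using that every $\phi \in \Aut(G)$ lifts to an automorphism of $G_r$ normalising $\Gamma(p_r)$. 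That $\cH_r(G)$ \emph{displays the $r$-global structure} of $G$ translates the property that $\cT(G_r)$ distinguishes the relevant tangles of $G_r$ through the covering: the $r$-locality of $p_r$ guarantees that the $r$-local data of $G$ lifts faithfully to $G_r$, so tangles of $G$ correspond to $\Gamma(p_r)$-orbits of tangles of $G_r$.

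The main obstacle is pinning down the correct order of separations on $G_r$, verifying that the canonical tangle-tree machinery applies cleanly in this infinite but locally finite, highly symmetric setting, and ensuring that the resulting decomposition interacts with $\Gamma(p_r)$ well enough to descend to a \emph{finite} graph-decomposition of $G$ with the claimed display property. Uniqueness of $\cH_r(G)$ then follows from the uniqueness of $p_r$ up to isomorphism of coverings, combined with the canonical character of the tangle-tree construction applied to $G_r$.
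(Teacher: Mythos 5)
This theorem is not proved in the present paper at all --- it is imported verbatim from \cite[Theorem~1]{canonicalGD} --- but your outline coincides with the construction the paper recounts in \cref{sec:CanGraphDecIntoCliques}: build a canonical tree-decomposition of the $r$-local cover $G_r$ by tangle-theoretic machinery and fold it along $p_r$ via the deck-transformation action as in \cref{const:TDFoldingToGD}. The only cosmetic difference is that the paper's preferred modern route uses bottlenecks (\cref{construction:N(G)} applied to all bottlenecks of $G_r$, yielding $\cT(M(G_r))$) rather than the classical canonical tangle-tree theorem you invoke, so your proposal is essentially the same approach.
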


Following~\cite{canonicalGD}, the construction of~$\cH_r(G)$ applies state-of-the-art tangle theory to the $r$-local cover~$G_r$ of~$G$ to obtain a canonical \td\ $\cT(G_r)$ of~$G_r$ that suitably captures the connectivity structure of~$G_r$; such a \td\ is well suited for the covering~$G_r$ due to its global tree-like nature.
The \td\ $\cT(G_r)$ is then `folded' along the covering map~$p_r: G_r \to G$ to obtain a canonical graph-decomposition $\cH_r(G)$ of~$G$ that displays the $r$-global structure of~$G$ (see~\cref{sec:Background:GraphDecsAndCoverings,sec:CanGraphDecIntoCliques}).

It is also possible to reconstruct the graph-decomposition~$\cH_r(G)$ from finite information in $G$ that is sufficiently local to be reflected in the covering graph~$G_r$~\cite{computelocalSeps}.
This construction avoids the use of the usually infinite $r$-local covering and instead builds on a finite theory of $r$-local separations (see~\cref{sec:CanGraphDecIntoCliques}).\\

One of the natural questions arising from~\cref{thm:CanGraphDec} is whether there are classes of graphs~$G$ that can be characterised by their decompositions~$\cH_r(G)$.
With the main result of this paper, we answer this question affirmatively for the class of `$r$-locally chordal' graphs, thereby obtaining the first known such characterisation.

For this, given an integer~$r \ge 0$, a graph is \defn{$r$-locally chordal} if all its balls of radius $r/2$ are chordal~\cite{LocallyChordal}.
Further, a graph-decomposition of $G$ is \defn{into cliques} if all of its bags are cliques in $G$, that is, they induce complete subgraphs of $G$.
Our main result now reads as follows:

\begin{restatable}{mainresult}{Hrintcliques}\label{thm:Hr-int-cliques}
   Let $G$ be a finite, connected graph and $r \geq 3$ an integer.
   Then the following are equivalent:
   \begin{enumerate}
       \item $G$ is $r$-locally chordal.
       \item The canonical \gd\ $\cH_r(G)$ of~$G$ which displays its $r$-global structure is into cliques.
   \end{enumerate}
\end{restatable} 

\cref{thm:Hr-int-cliques} can be seen as an $r$-local analogue of Halin's well-known characterisation of locally finite, connected, chordal graphs as those that admit a tree-decomposition into cliques~\cite{halin1964simpliziale,halin1989graphentheorie}.
As such it fits into the global structure theorems for $r$-locally chordal graphs by Abrishami and Knappe~\cite{localGlobalChordal}, who also use the results in this paper (see~\cref{sec:GDsIntoMaximalCliques}).
The definition of $r$-locally chordal graphs, the construction of~$\cH_r(G)$, and the proof of~\cref{thm:Hr-int-cliques} all follow the perspective of $r$-local coverings as a framework to discriminate between the local and global aspects of a graph's structure, as we now explain.

Recall that a graph is \defn{chordal} if every cycle of length at least four has a chord; in particular, a graph is chordal if and only if it is $r$-locally chordal for all integers $r > 0$.
One can show that a graph $G$ is $r$-locally chordal if and only if its $r$-local cover $G_r$ is chordal~\cite[Theorem~1]{LocallyChordal}; in other words, the definition of `$r$-locally chordal' aligns with the perspective of $r$-local coverings.
Abrishami, Knappe and Kobler also exhibit $r$-local analogues of further classical characterisations of chordal graphs, such as by their minimal separators~\cite{LocallyChordal}.

For the proof of~\cref{thm:Hr-int-cliques}, recall that the graph-decomposition $\cH_r(G)$ is obtained by folding the canonical tree-decomposition $\cT(G_r)$ along the covering map $p_r$.
Building on this construction, our proof of~\cref{thm:Hr-int-cliques} follows the `local-global paradigm' of $r$-local coverings:
We do not work in $G$ itself, but instead build on the $r$-local covering $p_r: G_r \to G$ and on global structure theorems which we apply to $G_r$ and then transfer to $G$ via $p_r$, as follows.

If a finite, connected graph $G$ is $r$-locally chordal, then its $r$-local cover $G_r$ is a locally finite, connected, chordal graph.
We show that $\cT(G_r)$ is then a canonical \td\ into cliques (\cref{thm:canonicalchordalTD}), building on the construction in~\cite{computelocalSeps}; this strengthens the known existence of such tree-decomposition of locally finite, connected, chordal graphs~\cite{halin1964simpliziale,halin1989graphentheorie} to ones that are even canonical (see~\cref{subsec:Intro:TDsChordal}).
Finally, we prove that $\cH_r(G)$ is into cliques if and only if $\cT(G_r)$ is into cliques, which then completes the proof of~\cref{thm:Hr-int-cliques}.

\subsection{Tree-decompositions and chordal graphs} \label{subsec:Intro:TDsChordal}

It is well-known\footnote{This statement is immediate; e.g. see the first paragraph of the proof presented in \cite[Theorem~3]{GavrilChordalGraphsSTIG}.} that graphs admitting \td s into cliques\footnote{In this paper, we investigate whether a graph admits certain \td s into cliques. We remark that all these questions can equivalently be formulated in the language of subtree intersection graphs (over trees); see~\cite{localGlobalChordal}.} are chordal.
Recall that a \td\ $(T, (V_t)_{t \in V(T)})$ of a graph $G$ is \defn{into cliques} if all its bags $V_t$ are cliques in $G$, that is, every~$G[V_t]$ is a complete subgraph of~$G$.

\begin{proposition}[Folklore]\label{prop:subtreeintersec-is-chordal}
    Every (possibly infinite) graph that admits a \td\ into cliques is chordal.
\end{proposition}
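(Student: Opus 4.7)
The plan is to argue by contradiction. Suppose $G$ has a tree-decomposition $(T, \cV)$ into cliques and an induced cycle $C = v_0 v_1 \cdots v_{k-1} v_0$ with $k \geq 4$. For each $v \in V(G)$, set $T_v := \{t \in V(T) : v \in V_t\}$; this is a nonempty set inducing a connected subtree of $T$, by the axioms of a tree-decomposition. Because every bag is a clique, two distinct vertices $u, v$ share a bag if and only if $uv \in E(G)$; equivalently, $T_u \cap T_v \neq \emptyset$ iff $uv \in E(G)$. Applied to the induced cycle $C$, this says that $T_{v_i} \cap T_{v_j} \neq \emptyset$ precisely when $i$ and $j$ are consecutive modulo $k$.

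I would then show by induction on $k \geq 4$ that no such family of subtrees can exist inside a tree. For the base case $k = 4$, $T_{v_0}$ and $T_{v_2}$ are disjoint connected subtrees of $T$, so there is an edge $e = tt'$ of $T$ whose removal separates them (pick $a \in T_{v_0}$ and $b \in T_{v_2}$ and take the first edge of the unique $a$--$b$-path in $T$ that leaves $T_{v_0}$; this edge lies outside both subtrees). Each of $T_{v_1}$ and $T_{v_3}$ meets both $T_{v_0}$ and $T_{v_2}$, hence meets both components of $T - e$; since each is connected, each contains $e$. Therefore $t \in T_{v_1} \cap T_{v_3}$, contradicting $v_1 v_3 \notin E(G)$.

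For the inductive step $k \geq 5$, replace the pair $T_{v_0}, T_{v_1}$ by their union $T' := T_{v_0} \cup T_{v_1}$, which is again a connected subtree of $T$ because the two summands intersect. The family $T', T_{v_2}, T_{v_3}, \ldots, T_{v_{k-1}}$ has length $k - 1 \geq 4$ and still satisfies the consecutive-intersection property: $T'$ meets $T_{v_2}$ via $T_{v_1}$ and meets $T_{v_{k-1}}$ via $T_{v_0}$, while for $3 \leq j \leq k-2$ both $T_{v_0}$ and $T_{v_1}$ are disjoint from $T_{v_j}$ (neither $0$ nor $1$ is consecutive to $j$ modulo $k$), so $T' \cap T_{v_j} = \emptyset$; the remaining pairs are unchanged. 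This contradicts the induction hypothesis.

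I do not expect a serious obstacle. The only point that asks for any care is the separating-edge argument in the base case, which relies solely on the fact that any two vertices of a tree are joined by a unique finite path; this remains valid for possibly infinite $T$, so the ``possibly infinite'' clause of the proposition comes for free. Everything else is a direct unpacking of the tree-decomposition axioms together with the assumption that bags are cliques.
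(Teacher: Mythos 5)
Your proof is correct. The paper treats this proposition as folklore and gives no proof of its own (only a footnote pointing to Gavril), and your argument — translating an induced cycle into a cyclic family of subtrees of $T$ whose consecutive members intersect and whose non-consecutive members are disjoint, then ruling this out by induction via a separating edge of $T$ — is exactly the standard argument for this fact, with the base case and the contraction step both carried out correctly.
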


For finite graphs, it is also well-known that the converse of \cref{prop:subtreeintersec-is-chordal} holds, too (see e.g.\ Gavril \cite[Theorem~2 and~3]{GavrilChordalGraphsSTIG}): the chordal graphs are precisely those graphs admitting \td s into cliques.
This result was later extended to graphs with no infinite clique, such as locally finite graphs, by Halin \cite[10.2']{halin1964simpliziale} based on his \emph{zweiter Zerlegungssatz} \cite[Satz 5]{halin1989graphentheorie}:

\begin{theorem}[Halin]\label{thm:NoInifniteCliqueChordalIsTDIntoCliques}
    A graph $G$ with no infinite clique is chordal if and only if it admits a \td\ into (its maximal) cliques.
\end{theorem}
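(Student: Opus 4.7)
The ``if'' direction is immediate from \cref{prop:subtreeintersec-is-chordal}, so the work lies in showing that every chordal graph $G$ with no infinite clique admits a tree-decomposition into its maximal cliques. I would begin with two classical preparations. First, every minimal vertex separator $S$ of a chordal graph is a clique: if $S$ separates $a$ from $b$ with corresponding components $C_a, C_b$ of $G - S$, minimality forces every vertex of $S$ to have a neighbour in both $C_a$ and $C_b$; then, for distinct $x, y \in S$, concatenating a shortest $x$--$y$ path through $C_a$ with a shortest $x$--$y$ path through $C_b$ produces an induced cycle of length at least four unless $xy$ is already an edge, so chordality forces $S$ to be complete. Second, since $G$ has no infinite clique, every chain of cliques under inclusion has finite union, so by Zorn's lemma every clique of $G$ extends to a finite maximal clique.

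For the construction, let $T$ be the graph whose nodes are the maximal cliques of $G$, joining two maximal cliques $K$ and $K'$ by an edge precisely when $K \cap K'$ is a minimal $(K \setminus K')$--$(K' \setminus K)$-separator in $G$, and declare the bag at a node $K$ to be $K$ itself. The coverage axioms for a tree-decomposition follow readily from the extension principle: every vertex extends to a maximal clique, and every edge lies inside some maximal clique. The substantive requirement is that for each vertex $v$ the set of nodes $\{K : v \in K\}$ spans a connected subgraph of $T$; this is where one uses the clique-separator lemma to route between two maximal cliques through $v$.

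The main obstacle is to prove that $T$ is actually a tree, i.e.\ connected and acyclic, in the infinite regime. For finite $G$ one can invoke a simplicial elimination ordering or a maximum-weight spanning tree on the clique-intersection graph, but neither tool transfers cleanly to arbitrary $G$ with only the no-infinite-clique assumption. I would argue directly: for connectedness, given any two maximal cliques $K, K'$, recursively decompose $G$ along clique minimal separators (produced by the lemma above) to obtain a linking sequence in $T$; for acyclicity, observe that a cycle in $T$ would, via the minimality of the defining separators, yield an induced cycle of length at least four in $G$, contradicting chordality. Finally, a compactness-style argument patches the local tree behaviour into a global tree-decomposition, with the no-infinite-clique hypothesis ensuring that every limit bag remains a (finite) maximal clique.
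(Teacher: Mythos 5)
There is a genuine gap, and it sits exactly where you locate ``the main obstacle.'' The graph $T$ you define---nodes the maximal cliques, edges whenever $K\cap K'$ is a minimal separator---is not in general acyclic, and your proposed argument for acyclicity is false. The star $K_{1,t}$ with $t\ge 3$ (which appears in this paper as \cref{ex:no-maximal-and-canonical}) is already a counterexample for $t=3$: its maximal cliques are the three edges, any two of them intersect in the centre vertex, and that centre is a minimal separator of the two corresponding leaves, so your $T$ is a triangle. Yet $K_{1,3}$ is chordal, so a cycle in $T$ does \emph{not} ``yield an induced cycle of length at least four in $G$.'' Cycles in the clique graph are ubiquitous in chordal graphs (any three maximal cliques sharing a common minimal separator produce one); the classical finite construction therefore does not take this graph itself but a carefully chosen spanning tree of it (e.g.\ a maximum-weight spanning tree of the clique-intersection graph, or the tree arising from a perfect elimination ordering), and verifying \cref{T2} for that spanning tree is the actual content of the finite theorem.

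The second gap is the closing ``compactness-style argument,'' which is not an argument but precisely the hard part of the infinite case. The paper does not prove \cref{thm:NoInifniteCliqueChordalIsTDIntoCliques} at all---it cites Halin's simplicial decomposition theorem, whose proof is a genuinely transfinite recursive decomposition along clique separators---and the surrounding discussion shows why the limit step cannot be waved away: Diestel's uncountable chordal graph $G^*$ (invoked in \cref{ex:CountableNoMaxCliques}) is chordal but admits no tree-decomposition into cliques, so local clique-tree behaviour does not automatically patch into a global tree-decomposition; the no-infinite-clique hypothesis must enter the limit argument in an essential, quantitative way. Your preparatory steps (Dirac's lemma, which is \cref{thm:CharacterisationChordalViaMinimalVertexSeparator} here, and the Zorn extension to finite maximal cliques) are fine, but as written the proof establishes neither that a suitable tree exists nor that the construction survives passage to the infinite graph.
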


Diestel~\cite[Theorem~4.2]{CharacterisationOfCountableChordalGraphsAdmitTD} even characterised the countable graphs that admit a \td\ into cliques via forbidden `simplicial minors'.
This characterisation does not extend to uncountable graphs \cite[Paragraphs after Theorem 1.4]{CounterexampleUncountableChordalAdmitTD}, but Diestel conjectured an extension of his characterisation~\cite[End of \S1]{CounterexampleUncountableChordalAdmitTD}.\\

When working with highly symmetric graphs, such as Cayley graphs or covers of (finite) graphs, it is often desirable to find canonical \td s, as in the proof of~\cref{thm:Hr-int-cliques} or e.g.\ in~\cite{canonicalGD,CanTreesofTDs,StallingsQuasiTransGraphs,dunwoody2015vertex}.
Formally, a \td\ $(T, (V_t)_{t \in V(T)})$ of a graph $G$ is \defn{$\Gamma$-canonical} for a subgroup~$\Gamma$ of the automorphism-group~$\Aut(G)$ of~$G$ if there is an action of~$\Gamma$ on~$T$ that commutes with $t \mapsto V_t$, i.e.\ $V_{\phi \cdot t} = \phi(V_t)$ for every $t \in V(T)$ and $\phi \in \Gamma$; usually, the action of~$\Gamma$ on~$T$ is unique (\cref{lem:uniqueaction-on-regular-TD}).
If a \td\ is~$\Aut(G)$-canonical, then we call it \defn{canonical} for short.

Does~\cref{thm:NoInifniteCliqueChordalIsTDIntoCliques} still hold if we want the tree-decomposition into cliques to be canonical?
In other words, does every chordal graph with no infinite clique admit a canonical tree-decomposition into cliques?
With~\cref{thm:canonicalchordalTD} we give an affirmative answer for chordal graphs that are locally finite and connected\footnote{Disconnected chordal graphs sometimes, e.g. the disjoint union of two paths of length $3$, do not admit \emph{canonical} \td s into cliques for the simple reason that \td s do not allow forests as model graphs. 
We remark that a disconnected chordal graph admits a canonical forest-decomposition into cliques if and only if its components admit canonical \td s into cliques. 
Hence, we restrict our view in this paper to connected graphs and tree-decompositions.}; this in particular includes the $r$-local cover $G_r$ of a locally finite, connected graph $G$.

\begin{restatable}{mainresult}{canonicalchordalTD}\label{thm:canonicalchordalTD}
    Let $G$ be a locally finite, connected graph. 
    Then $G$ is chordal if and only if $G$ admits a canonical \td\ into cliques.
    More precisely, $\cT(G)$ is a canonical \td\ into cliques.
\end{restatable}

\noindent For more context on the proof of~\cref{thm:canonicalchordalTD}, see~\cref{subsec:Intro:ProofChordalTD}.

\cref{thm:NoInifniteCliqueChordalIsTDIntoCliques} makes the even stronger assertion that there are \td s \emph{into the graph's maximal cliques}.
Formally, a \defn{maximal} clique of a graph~$G$ is an inclusion-wise maximal clique, and a \td\ $(T, (V_t)_{t \in V(T)})$ of $G$ is \defn{into its maximal cliques} if the map $t \mapsto V_t$ is a bijection from~$V(T)$ to the set of maximal cliques of $G$.
We show that~\cref{thm:canonicalchordalTD} cannot be strengthened to canonical \td s into maximal cliques:
there are chordal graphs, even finite and connected ones, that do not admit a canonical \td\ into their maximal cliques (\cref{ex:no-maximal-and-canonical}).

In contrast, we provide a positive result for graphs~$\hat G$ from normal coverings~$p: \hat G \to G$ of locally finite, connected graphs~$G$ (see~\cref{sec:TDCoveringsMaxCliques} for the definitions); this in particular includes $r$-local coverings $p_r: G_r \to G$.
If such~$\hat G$ is chordal, then it admits a \td\ into its maximal cliques which is at least~$\Gamma(p)$-canonical for the group~$\Gamma(p)$ of deck transformations of~$p$:

\begin{restatable}{mainresult}{chordalTDmaximal}\label{thm:chordalTDmaximal}
    Let $G$ be a locally finite, connected graph and $p \colon \hat G \to G$ a normal covering.
    If $\hat G$ is chordal, then $\hat G$ admits a $\Gamma(p)$-canonical \td\ into its maximal cliques.
\end{restatable}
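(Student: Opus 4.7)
The plan is to deduce the theorem from Theorem~\ref{thm:canonicalchordalTD} applied to~$\hat G$, followed by a canonical refinement that turns the resulting clique-bags into maximal-clique-bags; this last step hinges crucially on the freeness of the $\Gamma(p)$-action.

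First, I would verify that $\hat G$ is locally finite and connected---the former from the local bijectivity of the covering~$p$ combined with local finiteness of~$G$, the latter from the standing assumption that coverings are of connected graphs. Theorem~\ref{thm:canonicalchordalTD} then yields an $\Aut(\hat G)$-canonical, and hence $\Gamma(p)$-canonical, \td\ $(T, \cV)$ of~$\hat G$ into cliques. Two observations set up the rest: \emph{(a)}~every maximal clique of~$\hat G$ already appears as a bag of~$(T, \cV)$, since every clique lies in some bag by the \td\ axioms and bags themselves are cliques, so a maximal clique cannot be strictly contained in any; \emph{(b)}~$\Gamma(p)$ acts freely on~$V(T)$, for if $\gamma \in \Gamma(p) \setminus \{1\}$ fixed a node~$t$, it would permute the finite clique~$V_t$ without fixed points by freeness on~$V(\hat G)$, and then an edge of~$\hat G$ between some~$v \in V_t$ and~$\gamma v \in V_t$ would project to a loop in~$G$---contradicting that~$G$ is simple.

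The heart of the argument, and the main obstacle, is to refine~$(T, \cV)$ equivariantly to a \td\ whose bags are exactly the maximal cliques. For each node~$t$ with~$V_t$ non-maximal, there is by~(a) a bag~$V_{t_0}$ that is a maximal clique strictly containing~$V_t$; the subtree property applied to each vertex of~$V_t$ shows that every bag on the path from~$t$ to~$t_0$ contains~$V_t$, so walking along this path one reaches a neighbor~$s$ of~$t$ with~$V_t \subsetneq V_s$, into which~$t$ can be absorbed by contracting~$ts$. The difficulty is that~$t$ may admit several such neighbors, and a $\Gamma(p)$-equivariant tie-breaking rule is required. Here freeness is decisive: by~(b) the stabilizer of every node of~$T$ is trivial, so there is no $\Gamma(p)$-induced permutation of candidate neighbors, and any canonical choice made on a system of $\Gamma(p)$-orbit representatives lifts uniquely and $\Gamma(p)$-equivariantly back to~$T$. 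This is the precise mechanism by which the obstruction exhibited in~\cref{ex:no-maximal-and-canonical} for the full $\Aut(\hat G)$-canonicity disappears in our setting. The iterated absorption terminates because bags are finite cliques and each step strictly increases or preserves bag-size, producing the desired $\Gamma(p)$-canonical \td\ of~$\hat G$ into its maximal cliques.
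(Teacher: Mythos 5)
Your overall strategy is the paper's: take the $\Gamma(p)$-canonical, point-finite, regular \td\ into cliques supplied by \cref{thm:canonicalchordalTD} and equivariantly contract edges with nested bags, with freeness of the deck-transformation action (cf.\ \cref{lem:DeckTrafoActFreelyOnCliques}) as the mechanism that makes an equivariant choice of contractions possible. Your observations (a) and (b) are correct, and your diagnosis of why the obstruction of \cref{ex:no-maximal-and-canonical} disappears under $\Gamma(p)$ is exactly the right one.

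Two steps do not hold as written, though. First, the claim that a node $t$ with non-maximal bag has a \emph{neighbour} $s$ with $V_t \subsetneq V_s$ is unjustified: the subtree property only gives $V_t \subseteq V_s$ for the neighbour $s$ of $t$ on the path to $t_0$, and equality can occur; strict containment is only guaranteed somewhere further along the path. The paper therefore contracts edges $st$ under the non-strict condition $V_s \subseteq V_t$. Second, and consequently, a \td\ ``into its maximal cliques'' requires $t \mapsto V_t$ to be a \emph{bijection} onto the set of maximal cliques; since your absorption rule fires only under strict containment, it never merges two adjacent nodes carrying equal bags, so injectivity of the final bag map is not established (this is exactly what Claim~1 of the paper's proof addresses). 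A third point is glossed over rather than wrong: when all absorptions of a round are performed simultaneously, one must check that the union of the bags over each contracted component of $T$ is still a clique. In the paper this is the content of the claim that each contracted $\Gamma(p)$-orbit is a matching; in your directed version the analogous fact is that each contracted component has a unique sink towards which the bags increase, so its bags union to the sink's bag. All of this is repairable, but it is precisely where the remaining work lies, and the limit/termination argument for an infinite decomposition tree also needs to be spelled out (point-finiteness and local finiteness give it).
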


\noindent Analogous to the use of~\cref{thm:canonicalchordalTD} in the proof of~\cref{thm:Hr-int-cliques}, the even stronger~\cref{thm:chordalTDmaximal} can be used to obtain a \gd\ of an $r$-locally chordal graph~$G$ which is into maximal cliques (see~\cref{sec:GDsIntoMaximalCliques}).

We also discuss what happens behind the barrier of locally finite graphs:
there are countable chordal graphs with no infinite clique that admit a \td\ into cliques, but no canonical one (\cref{ex:CountableNoCanonical}), and there are countable chordal graphs that admit a \td\ into cliques, but no \td\ into their maximal cliques (\cref{ex:CountableNoMaxCliques}).

\subsection{On the proof of~\texorpdfstring{\cref{thm:canonicalchordalTD}}{Theorem 2}} \label{subsec:Intro:ProofChordalTD}

Recall that the construction of~$\cH_r(G)$ described above builds on the construction of a canonical tree-decomposition $\cT(G_r)$ of the $r$-local covering $G_r$ of $G$.
So in order to apply~\cref{thm:canonicalchordalTD} in the proof of our main result, \cref{thm:Hr-int-cliques}, it is crucial to show the `more precisely'-part of~\cref{thm:canonicalchordalTD}, that is, that $\cT(G_r)$ is a canonical \td\ into cliques of a locally finite, connected, chordal graph $G_r$; in other words, merely the existence of such a \td\ is not enough for our purpose. 
While the construction of $\cT(G)$ for a graph $G$ relies on the recent notion of \emph{bottlenecks} introduced in~\cite{computelocalSeps} (see~\cref{construction:N(G)}), it follows the state-of-the-art method for building trees-of-tangles, which has been more and more refined over the past years; see in particular~\cite{finitesplinter,infinitesplinter,entanglements,computelocalSeps}.

In its current form, this method does not consider tangles directly, but sets of efficient tangle-distinguishers axiomatised as bottlenecks.
The tree-of-tangles is then constructed as a nested set of separations meeting all bottlenecks, and this nested set is built iteratively, by adding suitable separations of increasing order.
In infinite, locally finite graphs, such a nested set does not necessarily induce a tree-decomposition of the graph (see~\cite{infinitesplinter,ExamplesLinkedLeanTD}).
For chordal graphs, however, this issue disappears since all separations arising in the construction are tight and their separators are therefore cliques.
It remains to verify that the parts of the resulting \td\ are cliques; this follows, informally, because each clique defines a tangle of the graph, and distinguishing these clique‑tangles forces every part to lie inside a clique.

While the `more precisely'-part of~\cref{thm:canonicalchordalTD} is crucial for its application in the proof of~\cref{thm:Hr-int-cliques}, one can alternatively express the construction of a \td\ as in the main statement of~\cref{thm:canonicalchordalTD} more directly in the language of~\cite{CanTreesofTDs}, as follows.
We first cut the graph along all its tight clique-separators of size~$1$, then further decompose the resulting parts along their tight clique-separators of size~$2$, and continue iteratively.
This yields a so-called `canonical tree of tree-decompositions', which can be transformed into a canonical tree-decomposition when the underlying graph is locally finite.
As before, one can then verify that the parts of this \td\ are cliques if the graph is chordal.

The proof of~\cref{thm:chordalTDmaximal} applies an iterative post-processing to the \td\ from~\cref{thm:canonicalchordalTD}.
This becomes possible because this \td\ is canonical and the decomposed graph is a covering graph; the key difficulty lies in maintaining throughout the iteration that the \td\ is into cliques.

\subsection{Structure of the paper}

In~\cref{sec:Background} we recall the definitions and concepts around separators, separations and \td s and prove a few simple lemmas for later use.
We then prove~\cref{thm:canonicalchordalTD,thm:chordalTDmaximal} in~\cref{sec:CanTdIntoCliques}.
In~\cref{sec:COMPcanonicalGDviaCoverings}, we then apply these result to $r$-locally chordal graphs and prove \cref{thm:Hr-int-cliques}.
Finally, we provide counterexamples to various strengthenings of~\cref{thm:canonicalchordalTD} in~\cref{sec:CounterEx}.

\subsection{Acknowledgements}

We thank Tara Abrishami for many stimulating discussions on locally chordal graphs and local coverings.
We thank Mai Trinh for useful comments on the paper.

\section{Background} \label{sec:Background}

We follow standard graph theory notation and definitions from \cite{bibel}. We also refer to 
\cite{Hatcher} for terminology on topology. 

In this paper we deal with simple graphs, i.e.\ graphs with neither loops nor parallel edges.
Often, we will restrict our view to locally finite graphs; a graph is \defn{locally finite} if each vertex has finite degree. We specify when we deal with finite, locally finite, or general (possibly infinite) graphs.

For a set $\cX$ of sets, we denote $\bigcup_{X \in \cX} X$ by \defn{$\bigcup \cX$}.

\subsection{Minimal, tight and efficient separators}

Let $u,v$ be two vertices and $X$ a vertex-subset of a graph~$G$.
Then we say that $X$ \defn{separates $u$ and $v$} in $G$ and call $X$ an \defn{$u$--$v$ separator} of $G$ if $X \subseteq V(G) \setminus \{u,v\}$ and every $u$--$v$ path in $G$ meets $X$.
If $X$ separates some two vertices in $G$, then~$X$ is a \defn{separator} of $G$.
A separator $X$ is a \defn{minimal (vertex) separator} of $G$ if there are two vertices $u, v$ of $G$ such that $X$ is inclusion-minimal among the $u$--$v$ separators in $G$. 

Dirac characterized chordal graphs by their minimal separators: 

\begin{restatable}[Dirac~\cite{dirac1961rigid}, {Theorem~1}]{theorem}{tightsepchar}
    \label{thm:CharacterisationChordalViaMinimalVertexSeparator}
    A (possibly infinite)\footnote{It is immediate from the proof presented in \cite[Theorem~2.1]{blair1993introduction} that this also holds for infinite graphs.} graph is chordal if and only if every minimal separator is a clique.
\end{restatable}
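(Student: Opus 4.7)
The plan is to prove each direction using the same device — cut a cycle across a minimal separator into two induced arcs, then squeeze out a chord — but applied in opposite directions.

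For the forward direction, I assume $G$ is chordal, fix a minimal $u$--$v$ separator $S$, and aim to show that any two vertices $a, b \in S$ form an edge. Let $C_u, C_v$ be the components of $G - S$ containing $u, v$, respectively. The minimality of $S$ gives that every $s \in S$ has at least one neighbor in each of $C_u$ and $C_v$: otherwise $S \setminus \{s\}$ would already separate $u$ from $v$, contradicting inclusion-minimality. In particular $a$ and $b$ each have neighbors in $C_u$ and in $C_v$, and since $C_u$ is connected I can choose an induced $a$--$b$ path $P_u$ whose interior lies in $C_u$, and analogously an induced $a$--$b$ path $P_v$ through $C_v$. Their union is a cycle of length $\geq 4$ whose only possible chord is $ab$ — chords within $P_u$ or $P_v$ are excluded by induced-ness, and chords between their interiors are excluded because these lie in different components of $G - S$. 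Chordality of $G$ forces $ab \in E(G)$.

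For the converse, I argue by contraposition: suppose $G$ contains an induced cycle $C = v_1 v_2 \ldots v_k v_1$ with $k \geq 4$, and I will produce a minimal separator that is not a clique. Since $v_1$ and $v_3$ are non-adjacent, the set $V(G) \setminus \{v_1, v_3\}$ is a $v_1$--$v_3$ separator, so a minimal one $S$ exists by Zorn's lemma (the intersection of any descending chain of $v_1$--$v_3$ separators is again one, because every $v_1$--$v_3$ path is finite and thus leaves at least one of its vertices in some member of the chain). Every $v_1$--$v_3$ separator must contain $v_2$, the only internal vertex of the path $v_1 v_2 v_3$, so $v_2 \in S$; similarly, to block the long arc $v_3 v_4 \ldots v_k v_1$, the separator $S$ must contain some $v_i$ with $4 \leq i \leq k$. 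Since $C$ is induced, $v_2 v_i \notin E(G)$, so $S$ is not a clique.

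The main creative step in both directions is the chord-finding via two induced arcs across the separator; everything else is bookkeeping. The only place where the ``possibly infinite'' clause demands attention is the existence of a minimal $v_1$--$v_3$ separator in the backward direction, which I expect to handle routinely via Zorn's lemma since paths are finite. No such existence issue arises in the forward direction, as we are simply handed a minimal separator to work with.
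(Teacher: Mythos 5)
Your proof is correct, and it is essentially the argument the paper relies on: the paper does not prove this statement itself but cites Dirac via Blair--Peyton's Theorem~2.1, whose proof is exactly your device of joining two vertices of the separator by induced arcs through the two full components to force the chord, together with the observation that a minimal $v_1$--$v_3$ separator for an induced cycle must pick up $v_2$ and some $v_i$ with $i\ge 4$. Your Zorn's-lemma step supplies precisely the detail that the paper's footnote waves at for the infinite case, so nothing is missing.
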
 

A vertex-subset $X$ of a graph $G$ is a \defn{tight} separator of $G$ if at least two components~$C$ of~$G-X$ are \defn{full}, i.e.\ $N_G(C) = X$.\footnote{Full components are sometimes also called {\em tight} components.}
It is easy to see that a separator $X$ of $G$ is tight if and only if it is minimal.
The relation between a minimal (equivalently: tight) finite separator in a chordal graph and the respective full components is quite restrictive; indeed, we can extract the following structure from a proof of Hofer-Temmel and Lehner~\cite{chordalinfinite}:

\begin{lemma}\label{thm:CliqueAtTightSep}
    Let $X$ be a finite clique in a (possibly infinite) chordal graph $G$.
    Then for every full component~$C$ of~$G-X$, there is a vertex in~$C$ complete to $X$ in $G$.
\end{lemma}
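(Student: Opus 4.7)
My plan is to proceed by contradiction, exploiting the chordality of~$G$ through a shortest-path and cycle-chord argument. For every subset $Y\subseteq X$, set $A_Y\coloneqq\{v\in C : Y\subseteq N_G(v)\}$; since $C$ is full, $A_{\{x\}}\neq\emptyset$ for every $x\in X$. Assume toward a contradiction that $A_X=\emptyset$, and pick $Y\subsetneq X$ inclusion-maximal with $A_Y\neq\emptyset$; since singletons $\{x\}$ give $A_{\{x\}}\neq\emptyset$, we have $Y\neq\emptyset$, and by maximality every $v\in A_Y$ is non-adjacent to every $x\in X\setminus Y$ (otherwise $Y\cup\{x\}$ would contradict maximality).

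Fix any $x\in X\setminus Y$ and choose $v\in A_Y$ minimising the distance from $v$ to $x$ inside $G[C\cup\{x\}]$, say equal to $k$; note $k\geq 2$ since $v$ is not adjacent to~$x$. Let $v=u_0,u_1,\dots,u_k=x$ be a corresponding shortest path, so $u_1,\dots,u_{k-1}\in C$. For an arbitrary $y\in Y$, the vertices $v,u_1,\dots,u_{k-1},x,y$ traverse a cycle of length $k+2\geq 4$ in~$G$: the closing edge $xy$ exists because $X$ is a clique, and $yv$ because $v\in A_Y$. Since $G$ is chordal, this cycle carries a chord.

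The main technical step is classifying these chords. Chords $u_iu_j$ with $|i-j|\geq 2$ would give a shorter $v$--$x$ path; chords $u_ix$ with $i\leq k-2$ would place a neighbour of~$x$ closer to~$v$ than~$u_{k-1}$; and $u_0x\notin E(G)$ by the assumption on~$v$. Hence every chord must have the form $u_iy$ for some $1\leq i\leq k-1$. Take the smallest such~$i$ and consider the sub-cycle $v,u_1,\dots,u_i,y,v$ of length $i+2$. The shortest-path constraints rule out all chords $u_ju_\ell$, and the minimality of~$i$ rules out every $u_jy$ with $1\leq j\leq i-1$; so the sub-cycle has no chord at all unless it is a triangle, i.e.\ $i=1$. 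Therefore $u_1y\in E(G)$ for every $y\in Y$, giving $u_1\in A_Y$.

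This produces the desired contradiction: the sub-path $u_1,u_2,\dots,u_k=x$ shows $u_1$ has distance at most $k-1$ to $x$ in $G[C\cup\{x\}]$, violating the minimal choice of~$v$. I expect the main obstacle to be the careful bookkeeping of chord cases in both the large cycle and the nested sub-cycle — in particular, verifying that the sub-cycle analysis genuinely forces $i=1$ rather than merely some smaller $i$, and cleanly separating the two minimality choices (on $|Y|$ and on~$k$). Once these are laid out correctly, the argument closes on itself without any further ingredient.
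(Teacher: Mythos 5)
Your proof is correct. The double extremal choice (first an inclusion-maximal $Y\subseteq X$ with $A_Y\neq\emptyset$, then a vertex of $A_Y$ at minimum distance to a fixed $x\in X\setminus Y$ in $G[C\cup\{x\}]$) is sound: $Y$ exists because $X$ is finite, the distance is finite because $C$ is connected and $x$ has a neighbour in $C$ (fullness), and your chord classification on the cycle $u_0\dots u_k y u_0$ and on the sub-cycle through the lowest chord $u_iy$ is exhaustive, forcing $i=1$ and hence $u_1\in A_Y$ at smaller distance --- a genuine contradiction. Note that the paper does not spell out a proof at all: it defers entirely to Hofer-Temmel and Lehner's argument for their Lemma~14, which (as the paper's footnote indicates) proceeds by selecting a maximal element among certain neighbourhood sets indexed by the vertices of $X$ and paths $P_w$ into $C$. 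Your argument is therefore a welcome self-contained alternative; it is a fairly standard chordality argument (extremal choice plus shortest-path chord analysis), and it uses the finiteness of $X$ exactly once --- to guarantee the maximal $Y$ --- which matches the paper's remark that the lemma genuinely fails for infinite cliques $X$. One cosmetic point: the phrase ``pick $Y\subsetneq X$ inclusion-maximal with $A_Y\neq\emptyset$'' should be read as ``inclusion-maximal among all $Y\subseteq X$ with $A_Y\neq\emptyset$, which is proper since $A_X=\emptyset$''; as written it could be misread as maximality only within proper subsets, though here the two coincide.
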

\begin{proof}
    Follow Hofer-Temmel and Lehner's proof of \cite[Lemma 14]{chordalinfinite} to obtain the desired result; there, the existence of the paths~$P_w$ is guaranteed for every $w \in X$ by $C$ being a full component of~$G-X$.
\end{proof}

\noindent
We remark that \cref{thm:CliqueAtTightSep} fails if we allow the clique~$X$ to be infinite\footnote{The statement of \cite[Lemma 3.13]{chordalinfinite} does not include this necessary assumption of~$X$ being finite. If~$X$ is infinite, then their proof fails in that there might not even exist a single inclusion-maximal element among the~$C_v$ for $v \in V_1$.}:
Consider the chordal graph~$G$ consisting of two countable infinite cliques~$X = \{x_0, x_1, \dots\}$ and~$C = \{c_0, c_1, \dots \}$ that are joined by all edges~$x_i c_j$ with~$i \le j$.
Then~$C$ is a full component of~$G-X$, but every~$c_j \in X$ has only the finite set~$\{x_i : i \le j\}$ of neighbours in~$X$, contradicting the statement.

Finally, two maximal cliques in a chordal graph can also be told apart by vertex sets of comparatively smaller size.
To formalize this, let $A,B,X$ be vertex-subsets of a graph $G$.
Then we say that $X$ \defn{separates $A$ and $B$} in $G$ and call $X$ an \defn{$A$--$B$ separator} of $G$ if each $A$--$B$ path in $G$ meets $X$.
If the size of $X$ is minimal among all the $A$--$B$ separators, then $X$ \defn{efficiently} separates $A$ and $B$.

Now an efficient separator of any two finite, maximal cliques in a chordal graph is smaller than the minimum of their sizes:

\begin{theorem}\label{thm:smallEffSepOfMaxCliques}
    Let $X$ and $Y$ be two distinct maximal cliques in a (possibly infinite) chordal graph $G$.
    If at least one of $X$ and $Y$ is finite, then there is a $X$--$Y$ separator of $G$ whose size is less than the sizes of $X$ and $Y$.
\end{theorem}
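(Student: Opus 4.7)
The plan is to produce an explicit $X$-$Y$ separator of size $|X|-1$ by removing one carefully chosen vertex from $X$. Swapping the roles of $X$ and $Y$ if necessary, I may assume $|X|\leq|Y|$, so that $X$ is the smaller (and in particular finite) of the two cliques; then a separator of size $|X|-1$ is strictly smaller than both $|X|$ and $|Y|$.

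Since $X\neq Y$ are distinct maximal cliques, $Y\setminus X\neq\emptyset$; as $Y$ is itself a clique, $Y\setminus X$ is connected in $G-X$ and so lies in a single component $C$ of $G-X$. I first observe that $C$ cannot be a full component of $G-X$: otherwise \cref{thm:CliqueAtTightSep}, applied to the finite clique $X$, would yield some $v\in C$ complete to $X$, extending $X$ to a larger clique and contradicting its maximality. Hence $N_G(C)\subsetneq X$, and I may pick a vertex $u\in X\setminus N_G(C)$. Crucially, $u\notin Y$: were $u\in X\cap Y$, then $u$ would be adjacent to every vertex of $Y\setminus X\subseteq C$, contradicting $u\notin N_G(C)$. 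So $u\in X\setminus Y$.

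I claim that $S:=X\setminus\{u\}$ is an $X$-$Y$ separator. Let $P$ be any $X$-$Y$ path in $G$, starting at $\alpha\in X$ and ending at $\beta\in Y$. If $\alpha\neq u$, then $\alpha\in S$ and $P$ meets $S$. Otherwise $\alpha=u$, and since $u\notin Y$ the path has positive length; the neighbours of $u$ lie either in $X\setminus\{u\}$ (as $X$ is a clique) or outside $X\cup C$ (as $u$ has no neighbour in $C$). Hence the second vertex of $P$ either already belongs to $S$, or lies in some component $C'\neq C$ of $G-X$. In the latter case, because $\beta\in Y\subseteq X\cup C$ is disjoint from $C'$, the path $P$ must eventually leave $C'$ through a vertex of $X$, and since $P$ is simple and has already visited $u$, this exit vertex lies in $X\setminus\{u\}=S$. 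Either way $P$ meets $S$, and $|S|=|X|-1<|X|\leq|Y|$, as required.

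The delicate part of the argument is the final case analysis, where one has to track the trajectory of a path $P$ starting at $u$ once it enters a foreign component $C'$ of $G-X$; both the pairwise disjointness of the components of $G-X$ and the simplicity of $P$ are needed to force the return through $S$. The genuine use of chordality is concentrated entirely in the existence of $u$, via \cref{thm:CliqueAtTightSep}.
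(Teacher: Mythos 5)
Your proof is correct, and it takes a genuinely different route from the paper's. The paper argues by contradiction: assuming no $X$--$Y$ separator of size less than $\lvert X\rvert$ exists, it invokes the infinite version of Menger's theorem (\cref{thm:Menger}, due to Erd\H{o}s) to produce $\lvert X\rvert$ disjoint $X$--$Y$ paths, which witness that the component $C$ of $G-X$ meeting $Y$ is full; then \cref{thm:CliqueAtTightSep} yields a vertex of $C$ complete to $X$, contradicting maximality. You run the same two ingredients --- \cref{thm:CliqueAtTightSep} and the maximality of $X$ --- in the contrapositive direction to conclude up front that $C$ is \emph{not} full, and then exhibit an explicit separator $X\setminus\{u\}$ of size exactly $\lvert X\rvert-1$ for any $u\in X\setminus N_G(C)$. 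This avoids Menger's theorem entirely (the paper imports \cref{thm:Menger} solely for this proof), is constructive, and additionally shows the separator can be taken inside $X$ itself; the cost is the final case analysis tracking a path that starts at $u$ and strays into a component $C'\neq C$, which you handle correctly: such a path can only return to $X\cup C\supseteq Y$ through $N_G(C')\subseteq X$, and simplicity rules out re-entering at $u$. Your observation that $u\notin Y$ (since any vertex of $X\cap Y$ is adjacent to the nonempty set $Y\setminus X\subseteq C$ and hence lies in $N_G(C)$) is the small but essential point that makes the count $\lvert X\rvert-1<\lvert X\rvert\leq\lvert Y\rvert$ suffice.
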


\noindent As a tool for the proof of~\cref{thm:smallEffSepOfMaxCliques}, we need the following version of Menger's Theorem for infinite graphs, which is due to Erd\H{o}s and was published in \cite{KonigTheorieDerEndlichenUndUnendlichen}.

\begin{theorem}\label{thm:Menger}
    Let $A$ and $B$ be vertex-subsets of a (possibly infinite) graph $G$.
    Then the cardinality of an efficient $A$--$B$ separator of $G$ (equivalently: the minimum cardinality of an $A$--$B$ separator) is the maximum cardinality of a family $\cP$ of disjoint $A$--$B$ paths in $G$.
\end{theorem}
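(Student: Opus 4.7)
The plan is to establish both inequalities separately: namely, that the minimum cardinality of an $A$--$B$ separator is both an upper bound and a lower bound for the supremum over families of pairwise disjoint $A$--$B$ paths, with the supremum attained.

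The easy direction (``$\geq$'') is essentially the pigeonhole principle. Given a family $\cP$ of pairwise disjoint $A$--$B$ paths and any $A$--$B$ separator $X$, each $P \in \cP$ must meet $X$ in at least one vertex, and since the paths of $\cP$ are pairwise vertex-disjoint the chosen witnesses are distinct. Hence $|\cP| \leq |X|$ for every such $X$, so the supremum on the left is at most the infimum on the right.

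For the converse (``$\leq$''), I would split into cases based on the cardinality $\kappa$ of a minimum $A$--$B$ separator. If $\kappa$ is finite, one reduces to the classical finite Menger theorem: a witness separator has finite support, and any augmenting argument involves only finitely many vertices, so it suffices to work in a finite subgraph of $G$ that contains a minimum separator together with any relevant augmenting structure. There the inductive proof on the number of edges (alternatively, the proof via alternating augmenting paths) yields $\kappa$ pairwise disjoint $A$--$B$ paths in $G$.

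For the infinite case ($\kappa$ an infinite cardinal), I would apply Zorn's lemma to the set of families of pairwise disjoint $A$--$B$ paths, partially ordered by inclusion, to obtain a maximal family $\cP$. The goal is to show $|\cP| \geq \kappa$, so that by the easy direction $|\cP| = \kappa$ and the maximum is attained. The hard part—and the main obstacle—is the augmenting step: if $|\cP| < \kappa$, one must exhibit an $A$--$B$ separator of cardinality at most $|\cP|$, which contradicts the choice of $\kappa$. This is where the argument goes beyond finite Menger, because one cannot simply pick one vertex per path to obtain a separator; instead, following Erd\H{o}s's original approach as published in \cite{KonigTheorieDerEndlichenUndUnendlichen}, one performs a transfinite augmentation that simultaneously modifies infinitely many paths of $\cP$ along an alternating structure, either producing a new $A$--$B$ path disjoint from $\cP$ (contradicting maximality) or, when no such augmentation exists, reading off an $A$--$B$ separator of cardinality $|\cP|$ from the set of vertices where the alternating structure stabilises. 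Since this infinite augmenting argument is the classical content of Erd\H{o}s's theorem, I would cite it rather than reproduce it in full.
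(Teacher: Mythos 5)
The paper itself gives no proof of this statement: it is quoted as a known theorem of Erd\H{o}s, published in K\H{o}nig's book, and used as a black box, so your proposal already does more than the paper's own treatment, and its overall shape (pigeonhole for the easy direction, finite Menger for finite $\kappa$, a maximal disjoint family for infinite $\kappa$) is sound. Two caveats are worth recording. First, your finite-case reduction ``work in a finite subgraph containing a minimum separator together with any relevant augmenting structure'' is circular as stated: the augmenting structure is precisely what has to be produced, and an arbitrary finite subgraph of $G$ may have a strictly smaller $A$--$B$ separator than $G$ does, so it is not clear which finite subgraph to take. The clean route is the one you mention only in passing: run the alternating-walk argument of the finite Menger proof directly in $G$, which is legitimate because the current path system is finite even though $G$ is not; if no augmenting walk exists, the usual construction reads off a separator of size at most $|\cP| < \kappa$, a contradiction. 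Second, and more importantly, the infinite case is much easier than the argument you describe. If $\kappa$ is infinite, take by Zorn's lemma a maximal family $\cP$ of pairwise disjoint $A$--$B$ paths. By maximality, the union $\bigcup_{P \in \cP} V(P)$ is an $A$--$B$ separator, and since every $A$--$B$ path is finite this union has cardinality at most $\max(|\cP|, \aleph_0)$. Hence $\cP$ cannot be finite (else $\kappa$ would be finite), and then $\bigl|\bigcup_{P \in \cP} V(P)\bigr| = |\cP| \geq \kappa$; together with the easy direction this gives $|\cP| = \kappa$, with the maximum attained. No transfinite augmentation or alternating structure is needed: what you sketch --- simultaneously modifying infinitely many paths so as either to augment or to read off a separator using one vertex per path --- is the content of the far harder Erd\H{o}s--Menger conjecture (the Aharoni--Berger theorem), not of the cardinality version stated here. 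Since you ultimately defer to the citation, as the paper does, nothing is incorrect, but the deferral is to a substantially heavier result than the statement requires.
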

\begin{proof}[Proof of~\cref{thm:smallEffSepOfMaxCliques}]
    By symmetry of the statement in $X$ and $Y$, we may assume without loss of generality that $|X| \leq |Y|$.
    Since $X$ and $Y$ are distinct maximal cliques, there is a vertex $y \in Y \setminus X$.
    Thus, the component~$C$ of~$G - X$ which meets~$Y$ is non-empty; note that $C$ is unique and contains~$Y \setminus X$, since~$Y$ is a clique.
    
    Suppose for a contradiction that there is no $X$--$Y$ separator of size less than $|X|$.
    We claim that~$C$ then is a full component of~$G-X$.
    Indeed, by~\cref{thm:Menger}, there is a family of~$|X|$ disjoint $X$--$Y$ paths in~$G$, and all of them must be contained in~$C$ except for their endvertices in~$X$, thus witnessing that~$C$ is a full component of~$G-X$.
    Now we apply~\cref{thm:CliqueAtTightSep} to obtain a vertex~$w \in C$ complete to~$X$.
    This then contradicts the maximality of the clique $X$ in $G$, as $X \cup \{w\}$ forms a larger clique.
\end{proof}

\begin{corollary} \label{lem:EffCliqueDistAreTight}
    Every efficient separator of two distinct maximal cliques in a (possibly infinite) chordal graph is minimal and tight.
\end{corollary}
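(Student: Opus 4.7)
The plan is to show that an efficient $X$--$Y$ separator~$S$ of two distinct maximal cliques $X, Y$ is a minimal $u$--$v$ separator for well-chosen vertices $u \in X$ and $v \in Y$; tightness will then come for free from the paper's earlier remark that a separator is minimal if and only if it is tight.

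First I would invoke~\cref{thm:smallEffSepOfMaxCliques} to get $|S| < \min(|X|, |Y|)$, which forces $X \not\subseteq S$ and $Y \not\subseteq S$, so there exist $u \in X \setminus S$ and $v \in Y \setminus S$. Any vertex in $X \cap Y$ is itself an $X$--$Y$ path of length zero and therefore lies in every $X$--$Y$ separator; in particular $X \cap Y \subseteq S$, so $u \neq v$. Moreover, every $u$--$v$ path in $G$ is in particular an $X$--$Y$ path and thus meets $S$, so $S$ is already a $u$--$v$ separator.

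To verify minimality, I fix an arbitrary $s \in S$. Since $|S \setminus \{s\}| < |S|$ equals the minimum $X$--$Y$ separator size, $S \setminus \{s\}$ is not an $X$--$Y$ separator, so there exists an $X$--$Y$ path $P$ from some $x \in X$ to some $y \in Y$ which avoids $S \setminus \{s\}$; as $S$ separates $X$ from $Y$, this path $P$ must pass through $s$. Now I exploit the clique structure of $X$ and $Y$: the edges $ux$ (if $u \neq x$) and $yv$ (if $y \neq v$) lie in $G$, so prepending $u$ and appending $v$ to $P$ yields a $u$--$v$ walk. This walk still avoids $S \setminus \{s\}$, since $u, v \notin S$ by choice, $x$ and $y$ lie on $P$ and hence are not in $S \setminus \{s\}$, and the interior of $P$ avoids $S$ altogether. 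Hence $S \setminus \{s\}$ does not separate $u$ from $v$, and $S$ is a minimal $u$--$v$ separator.

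The main subtlety lies in the boundary cases where the endpoints of $P$ coincide with $s$, $u$, or $v$, but these are all dispatched by the clique property combined with the fact that $P$ avoids $S \setminus \{s\}$. No further work is then needed for tightness, thanks to the stated equivalence of minimal and tight separators.
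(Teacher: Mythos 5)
Your proof is correct and takes essentially the same route as the paper's: both extract $u \in X \setminus S$ and $v \in Y \setminus S$ from \cref{thm:smallEffSepOfMaxCliques} and use the clique edges of $X$ and $Y$ to convert $X$--$Y$ paths into $u$--$v$ walks, so that inclusion-minimality of $S$ as a $u$--$v$ separator follows from its cardinality-minimality as an $X$--$Y$ separator. (One cosmetic slip: the interior of $P$ avoids $S \setminus \{s\}$, not $S$ ``altogether'' --- $P$ must pass through $s$ --- but this does not affect the argument, since all that is needed is that the walk avoids $S \setminus \{s\}$.)
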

\begin{proof}
    Let~$S$ be an efficient separator of two maximal cliques~$X$ and~$Y$.
    By~\cref{thm:smallEffSepOfMaxCliques}, $S$ has size less than the sizes of~$X$ and~$Y$.
    Thus, there exist vertices~$x \in X \setminus S$ and~$y \in Y \setminus S$.
    Then $S$ is also a~$x$--$y$ separator, and it is in fact a minimal one since each~$x$--$y$ separator also separates~$X \ni x$ and~$Y \ni y$ and~$S$ did so efficiently.
\end{proof}

\subsection{Tree-decompositions}

Let $G$ be a graph, let $T$ be a tree, and let $\cV = (V_t \mid t \in V(T))$ be a family of vertex-subsets of $G$ indexed by the nodes of~$T$. 
The pair $(T, \cV)$ is a \defn{\td} of $G$ if
\begin{enumerate}[label=\rm{(T\arabic*)}]
    \item\label{T1} $G = \bigcup_{t \in V(T)} G[V_t]$, and
    \item\label{T2} for every vertex $v$ of $G$, the subgraph \defn{$T_v$} of $T$ induced by the set of nodes of $t$ whose bags $V_t$ contain $v$ is connected.
\end{enumerate}
For a tree-decomposition $(T, \cV)$, we call $T$ its \defn{decomposition tree}, the~$V_t$ its \defn{bags} and the induced subgraphs~$G[V_t]$ its \defn{parts}.
For an edge $f = t_1 t_2 \in E(T)$, the set~$\mathdefn{V_f} \coloneqq V_{t_1} \cap V_{t_2}$ is the \defn{adhesion set} corresponding to~$f$.
We often introduce a tree-decomposition as a pair~$(T, \cV)$ and tacitly assume that~$\cV = (V_t \mid t \in V(T))$.

Roughly speaking,
\cref{T1} says that $\cV$ forms a decomposition of $G$, and, by \cref{T2}, the decomposition $\cV$ follows the structure of the tree $T$.
The second axiom can be made more descriptive by characterising a \td\ $(T,\cV)$ in terms of the separations of $G$ induced by the edges of $T$:
A pair $(T,\cV)$ is a \td\ if and only if it satisfies \cref{T1} and
\begin{enumerate}[label=(T\arabic*')]
    \setcounter{enumi}{1}
    \item\label{T2'} For every edge $f = t_1t_2$ of $T$ and for $i = 1,2$, set $A^f_i \coloneqq \bigcup_{t \in V(T_i)} V_t$ where $T_i$ is the component of $T-f$ which contains~$t_i$. Then $\mathdefn{s^f} \coloneqq\{A^f_1,A^f_2\}$ is a separation of $G$ and its separator $A^f_1 \cap A^f_2$ is the adhesion set $V_f = V_{t_1} \cap V_{t_2}$ corresponding to $f$.
\end{enumerate}

A \td\ $(T,\cV)$ of $G$ is \defn{into cliques} if all its bags are cliques in $G$.
For the interested reader, we remark that a map which assigns to each vertex $v$ of $G$ a subgraph $T_v$ of a tree $T$ is a subtree representation of $G$  if and only if $(T,\cV)$ is a \td\ into cliques with $V_t \coloneqq \{v \in V(G) \mid t \in V(T_v)\}$. See \cite[\S 2]{localGlobalChordal} for a more general correspondence.

A simple observation is that any finite clique is contained in a single bag of a \td:

\begin{lemma}\label{lem:CliqueContainedInBag}
    Let $(T,\cV)$ be a \td\ of a (possibly infinite) graph $G$.
    For every finite\footnote{For infinite cliques \cref{lem:CliqueContainedInBag} fails trivially: A countably infinite clique~$\{v_1, v_2, \dots\}$ has a tree-decomposition~$\{T, \cV\}$ into finite bags where~$T = t_1 t_2 \dots$ is a ray and~$V_{t_i} = \{v_1, \dots, v_i\}$.} clique $K$ in $G$, there is a node $t$ of $T$ such that the bag $V_t$ contains $K$.
\end{lemma}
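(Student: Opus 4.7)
The plan is to reduce the lemma to the Helly property for subtrees of a tree. Concretely, I intend to show that the family $\{T_v : v \in K\}$ consists of finitely many pairwise intersecting non-empty subtrees of $T$, and then deduce from Helly that their common intersection is non-empty; any node in that intersection yields a bag containing all of $K$.

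First, I would verify two basic consequences of axioms~\ref{T1} and~\ref{T2}. For every vertex $v \in V(G)$, the subgraph $T_v$ is non-empty: since $v$ lies in some part $G[V_t]$ by~\ref{T1}, we have $t \in T_v$. Moreover, $T_v$ is connected by~\ref{T2}, so it is a non-empty subtree of $T$. Second, for every edge $uv \in E(G)$, the edge $uv$ must appear in some part $G[V_t]$ by~\ref{T1}, and hence $\{u,v\} \subseteq V_t$, giving $t \in T_u \cap T_v \neq \emptyset$.

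Now let $K$ be a finite clique in $G$. By the first observation, each $T_v$ with $v \in K$ is a non-empty subtree of $T$; by the second observation applied to every pair of distinct vertices in $K$ (which forms an edge of $G$, as $K$ is a clique), the subtrees $\{T_v : v \in K\}$ are pairwise intersecting. The Helly property for subtrees of a tree states that any finite family of pairwise intersecting subtrees has non-empty common intersection; this is a standard fact proved by induction on the number of subtrees, using that any three pairwise intersecting subtrees share a common node (the median of any three pairwise chosen meeting points). Applying it to our family, we obtain a node $t \in \bigcap_{v \in K} T_v$, and then $K \subseteq V_t$ by definition of the $T_v$.

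I do not foresee a genuine obstacle: the only subtle point is the reliance on the Helly property, which, however, is classical and finite in nature here, so the assumption that $K$ is finite is exactly what we need (and, as the footnote indicates, the result indeed fails without it).
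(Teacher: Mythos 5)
Your proposal is correct and follows essentially the same route as the paper's proof: it observes that the subtrees $T_v$ for $v \in K$ are pairwise intersecting (via \cref{T1} and \cref{T2}) and then invokes the finite Helly property for subtrees of a tree. The only difference is that you also sketch a proof of the Helly property, which the paper cites as folklore.
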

\begin{proof}
    Consider the subtrees $T_v$ of $T$ from \cref{T2} with $v \in K$.
    By \cref{T1}, the $T_v$ are pairwise intersecting.
    A folklore result asserts that every family of subtrees of a (possibly infinite) tree~$T$ is (finitely) Helly, i.e.\ whenever a finite family $\bT$ of subtrees of~$T$ is pairwise intersecting, then their intersection $\bigcap \bT$ is non-empty.
    Since $K$ is finite, this yields that there is a node $t$ that is contained in every $T_v$ with $v \in K$, i.e.\ $V_t$ contains~$K$, as desired.
\end{proof}

\subsection{Interplay of tree-decompositions and separations}\label{subsec:interplayTDandSEP}

First, let us recall some definitions regarding separations.
Let $G$ be a graph.
A separation $\{A,B\}$ of $G$ is \defn{proper} if both $A \setminus B$ and $B \setminus A$ are non-empty.
A separation $\{A,B\}$ of $G$ is \defn{tight} if both $A \setminus B$ and $B \setminus A$ contain the vertex set of a full component of $G-(A \cap B)$.
Analogous to separators, it is easy to see that a separation $\{A,B\}$ is tight if and only if its separator $A \cap B$ is an inclusion-minimal $a$--$b$ separator for some $a \in A \setminus B$ and $b \in B \setminus A$.
The \defn{order} of $\{A,B\}$ is the size $k \coloneqq |A \cap B|$ of its separator;
we also say that $\{A,B\}$ is a \defn{$k$-separation} of $G$.

We refer to $(A,B)$ and $(B,A)$ as the \defn{orientations} of the separation $\{A,B\}$ or just as \defn{oriented separations} of $G$.
Two separations $\{A,B\}$ and $\{C,D\}$ are \defn{nested} if there are orientations of $\{A,B\}$ and $\{C,D\}$, say $(A,B)$ and $(C,D)$, such that $A \subseteq C$ and $B \supseteq D$.
A set of separations is \defn{nested} if its elements are pairwise nested.

Recall that a \td\ $(T,\cV)$ of a graph $G$ is \defn{$\Gamma$-canonical} for a subgroup~$\Gamma$ of~$\Aut(G)$ if there is an action of $\Gamma$ on $T$ that commutes with $t \mapsto V_t$, i.e.\ $V_{\phi \cdot t} = \phi(V_t)$ for every $t \in V(T)$ and $\phi \in \Gamma$; we will usually write~$\phi(t)$ instead of~$\phi \cdot t$ for notational simplicity.
An $\Aut(G)$-canonical \td\ is also called \defn{canonical} for short.
Furthermore, recall that, by \cref{T2'}, every edge $f$ of the decomposition tree $T$ induces a separation $s^f$ of $G$.
A \td\ is \defn{regular} if all its induced separations are proper.

\begin{lemma}[\cite{canonicalGD}, Lemma 3.7] \label{lem:uniqueaction-on-regular-TD}
    Let $(T,\cV)$ be a \td\ of a graph $G$.
    If $(T,\cV)$ is regular, then for every automorphism $\gamma$ of $G$ there exists at most one automorphism $\phi$ of $T$ such that $\gamma(V_t) = V_{\phi(t)}$.
    In particular, if additionally $(T,\cV)$ is $\Gamma$-canonical for a subgroup $\Gamma$ of $\Aut(G)$, then there is a unique action of $\Gamma$ on $T$ which commutes with $t \mapsto V_t$.
\end{lemma}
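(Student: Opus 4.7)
The plan is to reduce the first assertion to showing that the only automorphism $\psi$ of $T$ satisfying $V_{\psi(t)} = V_t$ for every $t \in V(T)$ is the identity. Indeed, if $\phi_1$ and $\phi_2$ both lift the same automorphism $\gamma$ of $G$, then $\psi := \phi_2^{-1} \circ \phi_1$ is a tree-automorphism fixing every bag setwise. The ``in particular'' statement then follows at once: if two actions of $\Gamma$ commute with $t \mapsto V_t$, then for each $\gamma \in \Gamma$ the corresponding tree-automorphisms both lift $\gamma$ and are therefore equal by the uniqueness part.

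To prove that such $\psi$ must be trivial, I would argue by contradiction and use that every automorphism of a tree either has a fixed node or inverts some edge. Thus if $\psi \neq \id$, I can find an edge $f = tt'$ of $T$ such that either (a) $\psi$ inverts $f$, i.e.\ $\psi(t)=t'$ and $\psi(t')=t$, or (b) $\psi(t') = t'$ while $\psi(t) \neq t$; case (b) is obtained by taking any moved node closest to a fixed one, so that its neighbour on the path to that fixed node is itself fixed. In both cases I denote by $T_1, T_2$ the two components of $T - f$, with $t \in T_1$ and $t' \in T_2$.

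The key step is then to show that $A^f_1 \subseteq A^f_2$, which contradicts regularity of $(T, \cV)$ since $s^f = \{A^f_1, A^f_2\}$ is a proper separation. For any vertex $v \in A^f_1$ there exists $s \in T_1$ with $v \in V_s$. Since $V_{\psi(s)} = V_s$, we also have $v \in V_{\psi(s)}$. It remains to observe that $\psi(s) \in T_2$: in case (a), $\psi$ swaps $T_1$ and $T_2$ because it inverts $f$, and in case (b), $\psi(T_1)$ is the component of $T - \psi(f)$ containing $\psi(t)$, where $\psi(f) = \psi(t)t'$ and $\psi(t)$ is a neighbour of $t'$ distinct from $t$, so $\psi(t) \in T_2 \setminus \{t'\}$ and thus $\psi(T_1) \subseteq T_2$. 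Either way $v \in A^f_2$, completing the contradiction.

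The main (mild) obstacle I anticipate is the case distinction on the behaviour of $\psi$ on $T$; once one remembers that a tree-automorphism either fixes a node or inverts an edge, and that in case~(b) one can choose a moved node adjacent to a fixed one, both cases reduce to the same ``$\psi(T_1) \subseteq T_2$'' observation that drives the argument. No infinite-graph subtlety arises, as everything is carried out edgewise on $T$.
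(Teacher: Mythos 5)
Your reduction to the identity-lift case is correct, and your treatment of cases (a) and (b) is sound: in both, $\psi(T_1)\subseteq T_2$ together with $V_{\psi(s)}=V_s$ gives $A^f_1\subseteq A^f_2$, contradicting properness of $s^f$. (The paper itself only cites this lemma from~\cite{canonicalGD} without reproving it, so I am judging your argument on its own.)

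There is, however, one genuine gap: the dichotomy you invoke --- every tree-automorphism fixes a node or inverts an edge --- is true for finite trees but \emph{false} for infinite ones. An automorphism of an infinite tree may be a translation along a double ray (a ``hyperbolic'' automorphism in Tits' classification: elliptic, inversion, or hyperbolic), fixing no node and inverting no edge; your case analysis then never gets started. This matters here, because the lemma is applied in this paper to \td s of infinite graphs (e.g.\ to the $\Gamma(p)$-canonical \td\ of the infinite cover $\hat G$ in the proof of Theorem~2), so $T$ is in general infinite, and your closing remark that ``no infinite-graph subtlety arises'' is precisely where the subtlety hides. The gap is fixable along the same lines: if $\psi$ is a translation with axis $\dots x_{-1}x_0x_1\dots$ and $\psi(x_i)=x_{i+\ell}$ for some $\ell\geq 1$, take $f=x_0x_1$ with $T_1\ni x_0$ and $T_2\ni x_1$; then $T_1\subseteq\psi(T_1)\subseteq\psi^2(T_1)\subseteq\cdots$ and $\bigcup_{k\geq 0}\psi^k(T_1)=V(T)$, so every $s\in T_2$ satisfies $\psi^{-k}(s)\in T_1$ for some $k$, and since $V_{\psi^{-k}(s)}=V_s$ this yields $A^f_2\subseteq A^f_1$, again contradicting regularity. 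With that third case added (or with an explicit restriction to point-finite \td s, where each $\psi$-invariant finite subtree $T_v$ forces a fixed node or inverted edge), your proof is complete.
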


Given a \td\ $\cT \coloneqq (T, \cV)$ of a graph~$G$, it follows from the definition that the set~$\mathdefn{N(\cT)} \coloneqq \{s^f : f \in E(T)\}$ of all its induced separations~$s^f$ of~$G$ is nested.
If~$\cT$ is also $\Gamma$-canonical for a subgroup~$\Gamma$ of~$\Aut(G)$, then the set~$N(\cT)$ inherits this symmetry in that it is itself~\emph{$\Gamma$-invariant}:
The automorphisms~$\phi$ of $G$ naturally act on the separations $s = \{A,B\}$ of $G$ by the map $\phi(s) \coloneqq \{\phi(A),\phi(B)\}$.
A set $S$ of separations of $G$ is \defn{$\Gamma$-invariant}\footnote{$\Gamma$-invariant sets of separations are sometimes also called \emph{$\Gamma$-canonical}, and $\Aut(G)$-invariant sets of separations are referred to as \emph{canonical} for short (see e.g.~\cite{canonicalGD}).} if $\phi(s) \in S$ for every $s \in S$ and every $\phi \in \Gamma$.

Conversely, in finite graphs, a ($\Gamma$-invariant) nested set of separations of~$G$ induces a ($\Gamma$-canonical) \td\ of~$G$.
In infinite graphs, this holds under an additional assumption, as shown in~\cite[Lemma~2.7]{infinitesplinter}; we here state their lemma in a way compatible with~\cite{computelocalSeps}. 
For this, let us call a set $S$ of separations of $G$ \defn{point-finite} if every vertex of $G$ appears in the separator of at most finitely many separations in $S$.
Further, a \td\ $(T,\cV)$ of $G$ is \defn{point-finite} if $T_v$ is finite for every vertex $v$ of $G$.
Given a nested set $N$ of proper separations of a graph, let \defn{$\cT(N)$} be defined as in \cite[Construction~2.3]{computelocalSeps}.

\begin{lemma}[{\cite[Lemma~2.7]{infinitesplinter}}]\label{lem:PointFiniteInducesTD}
    Let $N$ be a nested set of proper separations of a connected graph $G$.
    If $N$ is point-finite, then $\cT(N)$ is a \td\ of $G$ such that $f \mapsto s^f$ is a bijection from the edges of $T$ to $N$. 
    In particular, $\cT(N)$ is regular and point-finite. 
    Moreover, if $N$ is $\Gamma$-invariant for a subgroup~$\Gamma$ of~$\Aut(G)$, then $\cT(N)$ is $\Gamma$-canonical.
\end{lemma}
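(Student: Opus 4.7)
The plan is to unpack Construction~2.3 of~\cite{computelocalSeps} and verify in sequence the tree-decomposition axioms together with the asserted bijection, regularity, and canonicity.

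First, I would recall the structure of $\cT(N) = (T, \cV)$. For each vertex $v$ of $G$, the set $\sigma_v$ of oriented separations $(A, B)$ arising from $\{A, B\} \in N$ with $v \in A \setminus B$ is a consistent orientation in the sense that any two such oriented separations are pairwise nested pointing ``towards'' each other. The nodes of $T$ are the parts, that is, the maximal sets $U$ of vertices all inducing the same $\sigma_v$; the bag $V_t$ at a node $t$ is the corresponding set $U$ together with the separators of all separations in $N$ bordering $U$; and two nodes are adjacent in $T$ precisely when their orientations differ in a single (necessarily minimal) separation $s \in N$, which is then assigned to that edge.

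Second, I would establish that $T$ is a tree. Acyclicity is essentially automatic, since the edges of $T$ are labelled by distinct separations in $N$ and along any walk in $T$ the sequence of separations traversed respects the nesting order. Connectedness is the step that genuinely needs the hypotheses: for two vertices $u, v$ of $G$, the connectedness of $G$ produces a finite $u$--$v$ path $P$, and since every vertex of $P$ lies in the separator of only finitely many separations in $N$ by point-finiteness, only finitely many separations in $N$ actually separate $u$ from $v$; these are linearly ordered by nesting, producing a finite path in $T$ between the parts containing $u$ and $v$.

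Third, I would verify axioms~\cref{T1} and~\cref{T2}. For~\cref{T1}, every vertex lies in the bag of its own part, and for every edge $uv$ of $G$ no separation in $N$ can have $u$ and $v$ on strictly opposite sides of the separator, so $u$ and $v$ lie jointly in some bag. For~\cref{T2}, the subtree $T_v$ consists of the part of $v$ together with the nodes obtained by flipping separations whose separator contains $v$; point-finiteness bounds this to finitely many separations, and their nested structure makes $T_v$ a finite connected subtree. This verification simultaneously yields the bijection $f \mapsto s^f$ between $E(T)$ and $N$ as well as the point-finiteness of $(T, \cV)$. Regularity then follows immediately from the assumption that every separation in $N$ is proper; and for canonicity, the $\Gamma$-invariance of $N$ gives $\sigma_{\phi(v)} = \phi(\sigma_v)$ for every $\phi \in \Gamma$, so $\Gamma$ permutes the parts and thereby acts on $V(T)$ in a way that commutes with the bag map by construction.

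The main obstacle I expect is establishing the connectedness of $T$ together with axiom~\cref{T2} in the infinite setting; both genuinely hinge on point-finiteness to reduce what could otherwise be unboundedly nested families of separations to finite paths and finite subtrees. Without point-finiteness a $u$--$v$ path in $G$ could cross infinitely many separations in $N$, and the resulting ``walk'' in $T$ between the parts of $u$ and $v$ would fail to be a finite path, breaking both the tree structure of $T$ and the finiteness of $T_v$.
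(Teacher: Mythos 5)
The paper itself gives no proof of this lemma: it is imported verbatim from \cite[Lemma~2.7]{infinitesplinter}, with $\cT(N)$ defined by \cite[Construction~2.3]{computelocalSeps}, so there is no in-paper argument to compare against. Your overall strategy is the standard one, and you correctly isolate the single place where point-finiteness does real work: since $G$ is connected, any two vertices are joined by a \emph{finite} path, each of whose vertices lies in only finitely many separators of separations in $N$, so only finitely many separations of $N$ separate the two vertices; this is what makes the connecting walk in $T$ a finite path and what keeps each $T_v$ finite. Regularity from properness and canonicity from $\Gamma$-invariance are also handled correctly.

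There is, however, a concrete defect in your rendering of the construction, and taken literally it breaks the claimed bijection $f \mapsto s^f$. You take the nodes of $T$ to be the classes of vertices inducing the same set $\sigma_v = \{(A,B) : \{A,B\} \in N,\ v \in A \setminus B\}$. A vertex lying in a separator $A \cap B$ orients that separation neither way, so $\sigma_v$ is only a \emph{partial} orientation, and distinct vertices of one intended bag can induce distinct partial orientations. On the path $v_1v_2v_3v_4$ with $N$ consisting of the two proper nested separations with separators $\{v_2\}$ and $\{v_3\}$, your rule yields four singleton classes, whereas the intended tree has three nodes and its two edges are in bijection with $N$; moreover ``adjacent iff the orientations differ in a single separation'' is not even well defined for partial orientations. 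The construction you are deferring to instead takes as nodes certain \emph{full} consistent orientations of $N$ (including ones induced by no vertex) and defines $V_t$ as the intersection of the big sides of the orientation at $t$; one must then argue that adjacent orientations differ in exactly one separation and that every separation of $N$ arises from some edge. With that node set in place, the remainder of your outline (acyclicity, \cref{T1}, \cref{T2}, regularity, canonicity) goes through as you describe.
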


\section{Canonical tree-decompositions into cliques}\label{sec:CanTdIntoCliques}

In this section, we prove \cref{thm:canonicalchordalTD,thm:chordalTDmaximal}.

\subsection{Proof of \texorpdfstring{\cref{thm:canonicalchordalTD}}{Theorem 2}} \label{sec:ConstructionN(G)}

Let us restate~\cref{thm:canonicalchordalTD}:

\canonicalchordalTD*

\noindent
The backwards-implication is straightforward (\cref{prop:subtreeintersec-is-chordal}).
For the forwards-implication, we prove the `more precisely'-part of the statement.
Following~\cite{computelocalSeps}, $\cT(G)$ is defined as $\mathdefn{\cT(G)} := \cT(N(G))$ for a specific nested set~$N(G)$ of separations of $G$.

The construction of~$N(G)$ draws on the most recent developments in tangle-theory.
We shall not need any formal details here, but use these results as a black-box.
Let us give a rough sketch of our construction in the context of tangle-theory.
Tangles are an abstract notion of highly connected substructures of graphs originating in the work of Robertson and Seymour on their Graph Minor Theorem~\cite{GM}.
One of the central results of tangle-theory is the so-called Tree-of-Tangles Theorem which asserts that the relative position of the tangles of a graph to each other is tree-like and can be captured by a nested set of separations~\cite{GM}.

A \emph{tree of tangles} is a nested set of separations with the property that every two tangles of the graph are efficiently distinguished by one of its elements; many modern constructions even yield \emph{canonical} trees of tangles, that is, $\Aut(G)$-invariant such nested sets.
The most recent methods in tangle-theory find trees of tangles by focusing on the sets of separations that efficiently distinguish two given tangles~\cite{infinitesplinter,finitesplinter,entanglements,computelocalSeps}.
Such sets of efficient tangle-distinguishers can themselves be axiomatized in a similar manner as tangles, leading to the notion of \emph{bottlenecks}~\cite{computelocalSeps}.
The quest to find a (canonical) tree of tangles of a graph then translates into finding a (canonical) nested set of separations which meets all of the graph's bottlenecks.

To find the nested set~$N(G)$ of separations, we follow this approach:
We will first show that the set of separations that efficiently distinguishes two maximal cliques in a locally finite, chordal graph is indeed a bottleneck.
Secondly, we deduce that there is a canonical nested set~$N(G)$ of separations which meets all such bottlenecks; this step draws on recent results about tree-of-tangles constructions via bottlenecks~\cite{computelocalSeps}.
Thirdly, we show that~$\cT(G) = \cT(N(G))$ is indeed a \td\ of~$G$ and that this \td\ is into cliques.
Intuitively, if this \td\ were not into cliques, then some part would contain two distinct maximal cliques and they must be efficiently distinguished by a separation in~$N(G)$, which yields a contradiction. \\

Given two vertex-subsets $X,Y$ of a graph $G$, a separation $\{A,B\}$ of $G$ \defn{distinguishes} $X$ and $Y$ if, up to renaming sides, $X \subseteq A$ and $Y \subseteq B$.\footnote{In this paper, we always consider separations~$\{A, B\}$ that efficiently distinguish two distinct maximal cliques $X$ and $Y$, and by~\cref{lem:max-cliques-dist-by-low-order-sep}, such separations even satisfy that, up to renaming sides, $X \subseteq A$ and $X$ meets $A \setminus B$, and $Y \subseteq B$ and $Y$ meets $B \setminus A$.}
If $\{A,B\}$ has minimum order among the separations of $G$ that separate $X$ and $Y$, then $\{A, B\}$ distinguishes $X$ and $Y$ \defn{efficiently}.
Note that if~$\{A, B\}$ distinguishes~$X$ and~$Y$ (efficiently), then so does its separator~$A \cap B$.
A set~$S$ of separations of $G$ \defn{efficiently distinguishes} a set $\cX$ of vertex-subsets of $G$ if every two elements of $\cX$ are efficiently distinguished by some $s \in S$.

\begin{lemma}\label{lem:max-cliques-dist-by-low-order-sep}
    Let $X$ and $Y$ be two distinct maximal cliques in a locally finite, chordal graph $G$.
    Then $X$ and $Y$ are distinguished by a tight separation of order less than the sizes of $X$ and $Y$.
\end{lemma}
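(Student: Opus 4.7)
The plan is to leverage \cref{thm:smallEffSepOfMaxCliques} and \cref{lem:EffCliqueDistAreTight} to produce an efficient $X$--$Y$ \emph{separator}, and then promote that separator to an actual \emph{separation} of $G$ that is tight, distinguishes $X$ and $Y$, and has small order. So first I let $S$ be an efficient $X$--$Y$ separator of $G$. By \cref{thm:smallEffSepOfMaxCliques} (applicable because $G$ is locally finite, so both $X$ and $Y$ are finite cliques), some $X$--$Y$ separator has size strictly less than $\min(|X|,|Y|)$, whence the same holds for $S$ by efficiency.

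Because $|S|<|X|$ and $|S|<|Y|$, the sets $X\setminus S$ and $Y\setminus S$ are nonempty; since $X$ and $Y$ are cliques, each lies in a single component of $G-S$, which I call $C_X$ and $C_Y$ respectively. Next I need to verify that $C_X$ and $C_Y$ are both \emph{full} components of $G-S$. For this I invoke \cref{lem:EffCliqueDistAreTight}: picking $x\in X\setminus S$ and $y\in Y\setminus S$, it tells me $S$ is a minimal $x$--$y$ separator, which forces each $s\in S$ to have a neighbour in both $C_X$ and $C_Y$ (otherwise $S\setminus\{s\}$ would still separate $x$ from $y$, since any new $x$--$y$ path in $G-(S\setminus\{s\})$ would have to route through $s$). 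Hence both components are full.

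Finally, I set $A:=V(C_X)\cup S$ and $B:=V(G)\setminus V(C_X)$. Routine checks then show that $\{A,B\}$ is a separation of $G$ with separator $A\cap B=S$; that $X\subseteq A$ (using $X\setminus S\subseteq V(C_X)$ and $X\cap S\subseteq S$) and $Y\subseteq B$ (using $Y\setminus S\subseteq V(C_Y)\subseteq V(G)\setminus V(C_X)$ and $Y\cap S\subseteq S$); and that the two sides $A\setminus B=V(C_X)$ and $B\setminus A\supseteq V(C_Y)$ contain the full components $C_X$ and $C_Y$ of $G-S$, so $\{A,B\}$ is tight of order $|S|<\min(|X|,|Y|)$. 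The only mild obstacle is this last packaging step: I must make sure that the \emph{specific} components $C_X$ and $C_Y$---not merely some two full components abstractly guaranteed by the tightness of $S$---end up on opposite sides of $\{A,B\}$. This is precisely why I route through minimality of $S$ as an $x$--$y$ separator rather than via its tightness alone.
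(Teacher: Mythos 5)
Your proof is correct and follows essentially the same route as the paper: both apply \cref{thm:smallEffSepOfMaxCliques} for the size bound and the minimality/tightness argument behind \cref{lem:EffCliqueDistAreTight} to get tightness. The only difference is that you spell out explicitly the packaging of the efficient separator $S$ into the separation $\{V(C_X)\cup S,\ V(G)\setminus V(C_X)\}$, a step the paper leaves implicit by citing \cref{lem:EffCliqueDistAreTight} ``and its proof''.
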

\begin{proof}
    Since $G$ is locally finite, the cliques $X$ and $Y$ are finite.
    Hence, \cref{thm:smallEffSepOfMaxCliques} yields that an efficient $X$--$Y$ distinguishing separation has order less than the size of $X$ and $Y$.
    By~\cref{lem:EffCliqueDistAreTight} and its proof, both the separator of the separation and the separation itself are tight. 
\end{proof}

Given two maximal cliques~$X$ and~$Y$ in a locally finite, chordal graph~$G$, let~$\mathdefn{\beta(X,Y)} = \beta(Y,X)$ be the set of separations efficiently distinguishing $X$ and $Y$; in particular, all separations in~$\beta(X,Y)$ have the same finite order and, by~\cref{lem:max-cliques-dist-by-low-order-sep}, the separations in~$\beta(X,Y)$ have order less than~$|X|$ and $|Y|$.
This implies by~\cite[Example~3.4]{computelocalSeps} that $\beta(X,Y)$ is a bottleneck.
In this paper, we will use the notion of bottlenecks as a black-box and refer the reader to~\cite[Section~3]{computelocalSeps} for the definition.

\begin{lemma} \label{lem:max-clique-dist-are-bottlenecks}
    Let $X$ and $Y$ be two distinct maximal cliques in a locally finite, chordal graph $G$.
    Then the set~$\beta(X, Y)$ is a bottleneck in~$G$.
\end{lemma}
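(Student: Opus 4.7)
The plan is to invoke the characterisation of bottlenecks given in \cite[Example~3.4]{computelocalSeps} as a black-box, after verifying that the pair $(X,Y)$ satisfies the hypotheses of that example. So the proof will essentially reduce to a one-line citation, preceded by the assembly of two size/order observations that are already at hand.

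First, I would record the basic properties of $\beta(X,Y)$. Since $G$ is locally finite, both maximal cliques $X$ and $Y$ are finite. As $X$ and $Y$ are distinct maximal cliques, there exist vertices $x \in X \setminus Y$ and $y \in Y \setminus X$ (by maximality of each clique), so $X$ and $Y$ can be distinguished by some separation, namely any minimal $x$--$y$ separator together with the two sides it induces. Hence $\beta(X,Y)$ is nonempty, and all of its elements share the same finite order $k$, namely the minimum order of an $X$--$Y$ distinguishing separation.

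Second, I would apply \cref{lem:max-cliques-dist-by-low-order-sep} to conclude that this common order satisfies $k < \min\{|X|,|Y|\}$. This is exactly the size condition that feeds into \cite[Example~3.4]{computelocalSeps}: because no separation in $\beta(X,Y)$ can have separator large enough to contain either $X$ or $Y$, each element of $\beta(X,Y)$ puts a nonempty part of $X$ on its $A$-side and a nonempty part of $Y$ on its $B$-side (up to renaming), which is precisely the configuration required to speak of efficient distinguishers as a bottleneck.

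Having checked both inputs, the conclusion follows directly: \cite[Example~3.4]{computelocalSeps} shows that the set of all separations that efficiently distinguish two vertex-subsets whose sizes strictly exceed the efficient distinguishing order forms a bottleneck; instantiating this with $(X,Y)$ yields that $\beta(X,Y)$ is a bottleneck in $G$. The main ``obstacle'', if any, is purely bookkeeping: matching the abstract hypotheses of \cite[Example~3.4]{computelocalSeps} to the concrete setting of a pair of maximal cliques in a chordal graph, where the strict size inequality is exactly the one provided by \cref{lem:max-cliques-dist-by-low-order-sep}.
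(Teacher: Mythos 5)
Your proposal is correct and follows exactly the paper's own argument: establish via \cref{lem:max-cliques-dist-by-low-order-sep} that all separations in $\beta(X,Y)$ share a common finite order strictly less than $|X|$ and $|Y|$, and then invoke \cite[Example~3.4]{computelocalSeps} as a black-box. The additional bookkeeping you supply (finiteness of the cliques, non-emptiness of $\beta(X,Y)$) is consistent with the remarks the paper makes just before stating the lemma.
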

\begin{proof}
    Combine~\cref{lem:max-cliques-dist-by-low-order-sep} with \cite[Example~3.4]{computelocalSeps}.
\end{proof}

In a second step, we find a canonical nested set of separations which meets all the bottlenecks~$\beta(X,Y)$.
For this, we will apply~\cite[Construction~3.6]{computelocalSeps} to the set~$\cB$ of all bottlenecks in the graph, including all those of the form~$\beta(X,Y)$.
We restate this construction here:

\begin{construction}[{\cite[Construction~3.6]{computelocalSeps}}] \label{construction:N(G)}
    Let $G$ be a graph, and let $\cB$ be a set of bottlenecks in $G$.
    For every $k \in \N$, let $\mathdefn{\cB^k(G)}$ be the set of all $k$-bottlenecks in $\cB$ in $G$ and $S^k:=\bigcup\cB^k(G)$.
    Recursively for all $k\in\N$, define nested sets $N^k(G) \subseteq S^k$ of separations, as follows.
    Let $\mathdefn{x^k}\colon S^k\to\N$ assign to each separation $s\in S^k$ the number of separations in $S^k$ that $s$ crosses, assuming for now that this number is finite.
    Write $\mathdefn{N^{<k}(G)}:=\bigcup_{j < k} N^j(G)$.
    For each $\beta \in \cB^k(G)$, let \mathdefn{$N^k(G,\beta)$} consist of the separations $s\in \beta$ which are nested with $N^{<k}(G)$ and among those have minimal~$x^k(s)$.
    Finally, set $\mathdefn{N^k(G)} := \bigcup\,\{\,N^k(G,\beta):\beta\in\cB^k(G)\,\}$.
    We further write $\mathdefn{N(G)}:=\bigcup_{k\in\N} N^k(G)$.
\end{construction}

\cref{construction:N(G)} yields a well-defined nested set of separations, as shown in~\cite{computelocalSeps}; in particular, the function~$x^k: S^k \to \N$ is indeed well-defined.

\begin{theorem}[{\cite[Theorem~3.7]{computelocalSeps}}]\label{thm:BottleneckNestedSet}
    Let $G$ be a locally finite, connected graph. 
    Given any set $\cB$ of bottlenecks in $G$, the sets $N(G)$ and $N^{<k}(G)$ (for $k\in\N$) as in~\cref{construction:N(G)} are well-defined nested sets of tight separations, and $N^k(G,\beta)$ is non-empty for every $k\in\N$ and every $k$-bottleneck $\beta$ in~$G$.
    Moreover, if~$\cB$ is $\Aut(G)$-invariant, then so are~$N(G)$ and $N^{<k}(G)$ for all $k\in\N$.
\end{theorem}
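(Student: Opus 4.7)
The plan is to prove the theorem by induction on $k \in \N$, simultaneously establishing well-definedness of~$x^k$, non-emptiness of each~$N^k(G,\beta)$, nestedness of~$N^k(G)$ (and hence of $N^{<(k+1)}(G)$ and, in the limit, of~$N(G)$), tightness of its elements, and preservation of $\Gamma$-invariance. Tightness is essentially free: by definition, every bottleneck consists of tight separations, and $N^k(G,\beta) \subseteq \beta$.

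The first key step in the inductive step will be to show that~$x^k(s)$ is finite for every $s\in S^k$. Here local finiteness of~$G$ enters crucially: since~$s$ has finite order~$k$, and every $s' \in S^k$ crossing~$s$ is tight of order~$k$, each such $s'$ must distribute its separator between the two sides of~$s$ and bound non-empty corners. A submodularity argument on the orders of the four corner separations, combined with the fact that in a locally finite graph there are only finitely many tight separators of bounded order whose trace interacts with a fixed finite set in a prescribed way, bounds the number of such~$s'$.

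Next, I would establish that $N^k(G,\beta)$ is non-empty for every $k$-bottleneck~$\beta$. This uses the splinter/bottleneck axioms from~\cite{computelocalSeps}: applied to~$\beta$ and the nested set~$N^{<k}(G)$ (whose separations all have order strictly less than~$k$), they produce at least one separation in~$\beta$ that is nested with every element of~$N^{<k}(G)$. Among those finitely many candidates (finiteness following from the previous step), picking the ones of minimum~$x^k$ yields the non-empty set~$N^k(G,\beta)$.

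The main obstacle will be showing that $N^k(G)$ itself is nested. Suppose $s \in N^k(G,\beta)$ and $s' \in N^k(G,\beta')$ cross. Both are nested with $N^{<k}(G)$, so invoking the uncrossing behaviour of bottlenecks one obtains replacements $t \in \beta$ and $t' \in \beta'$, still nested with $N^{<k}(G)$ and with each other, such that the total crossing count drops strictly; this contradicts the minimality of $x^k(s)$ or $x^k(s')$ in their respective selection sets. Once nestedness is in place, $\Gamma$-invariance is a formal consequence: if~$\cB$ is $\Gamma$-invariant, then each $\phi \in \Gamma$ permutes $\cB^k(G)$ and~$S^k$, commutes with the purely graph-theoretic function~$x^k$, and, by induction, preserves~$N^{<k}(G)$; hence it permutes the sets $N^k(G,\beta)$ and stabilises their union~$N^k(G)$, and in the limit $N(G)$.
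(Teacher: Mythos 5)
The paper does not prove this statement at all: it is imported verbatim as \cite[Theorem~3.7]{computelocalSeps}, and the authors explicitly treat bottlenecks and \cref{construction:N(G)} as a black box. So there is no in-paper argument to compare yours against; your proposal has to stand on its own as a reconstruction of the cited proof.

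As such, it is a reasonable outline of the standard splinter-type argument, but each of its three pivotal steps is asserted rather than proved, and those assertions are exactly where the mathematical content lives. First, the finiteness of $x^k(s)$: you appeal to ``a submodularity argument'' plus the claim that a locally finite graph has only finitely many tight separations of bounded order interacting with a fixed finite set, but crossing separations need not have intersecting separators, so you would have to prove that every $s'\in S^k$ crossing $s$ has its separator confined to a bounded neighbourhood of the (finite) separator of $s$ -- this is a genuine lemma, not a one-liner. Second, the non-emptiness of $N^k(G,\beta)$: you invoke ``the splinter/bottleneck axioms'' to produce an element of $\beta$ nested with $N^{<k}(G)$, but that is precisely the conclusion of the splinter lemma, whose proof (shifting a separation of $\beta$ across the lower-order separations it crosses and showing the process terminates in a member of $\beta$) you have not supplied; nothing in your induction hypothesis guarantees the shifted separations stay in $\beta$. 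Third, nestedness of $N^k(G)$: the claim that uncrossing $s\in\beta$ and $s'\in\beta'$ yields replacements $t\in\beta$, $t'\in\beta'$ that are still nested with $N^{<k}(G)$ \emph{and} have strictly smaller crossing numbers is the crux of the extremal argument, and it depends on corner properties of bottlenecks you never state. In short: the architecture (induction on order, finiteness of the crossing function, extremal choice, uncrossing contradiction, canonicity as a formal corollary) is the right one and matches how such theorems are proved in \cite{computelocalSeps} and its predecessors, but as written the proposal defers every non-routine verification to an unproven appeal to the axioms, so it is a plan rather than a proof.
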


\noindent Here, a set~$\cB$ of bottlenecks is \defn{$\Aut(G)$-invariant} if the action of~$\Aut(G)$ on the bottlenecks of~$G$ (which is inherited from the action of~$\Aut(G)$ on the separations of~$G$) is invariant on~$\cB$.

\begin{corollary}\label{lem:StructureOfN(G)}
    Let $G$ be a locally finite, connected, chordal graph, and let~$\cB$ be the set of all bottlenecks in~$G$.
    Then $N(G)$ is a well-defined, $\Aut(G)$-invariant, nested set of tight separations of $G$ that efficiently distinguishes every two distinct maximal cliques in $G$.
\end{corollary}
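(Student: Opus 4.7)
The plan is to deduce this corollary directly from \cref{thm:BottleneckNestedSet} and \cref{lem:max-clique-dist-are-bottlenecks}, after checking that the set $\cB$ satisfies the hypotheses of the former.

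First I would verify that $\cB$ is a set of bottlenecks. This is immediate from \cref{lem:max-clique-dist-are-bottlenecks}, which asserts exactly that each $\beta(X,Y) \in \cB$ is a bottleneck in $G$. Next, I would check that $\cB$ is $\Aut(G)$-invariant. For this, I would observe that every automorphism $\phi \in \Aut(G)$ permutes the maximal cliques of $G$, and that the induced action of $\phi$ on separations preserves both the order of separations and the property of distinguishing two vertex sets. Consequently, $\phi(\beta(X,Y)) = \beta(\phi(X),\phi(Y))$, which shows that $\cB$ is closed under the action of $\Aut(G)$.

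With both hypotheses in place, applying \cref{thm:BottleneckNestedSet} yields at once that $N(G)$ is well-defined, $\Aut(G)$-invariant, and a nested set of tight separations of $G$. It remains to verify the efficient distinguishing property. Given any two distinct maximal cliques $X$ and $Y$ of $G$, \cref{lem:max-cliques-dist-by-low-order-sep} provides a tight separation distinguishing them, so $\beta(X,Y)$ is non-empty; and since all its elements share a common finite order $k$, it is a $k$-bottleneck belonging to $\cB$. By \cref{thm:BottleneckNestedSet} the set $N^k(G,\beta(X,Y))$ is non-empty, and any element of it is a separation lying in $\beta(X,Y) \cap N^k(G) \subseteq \beta(X,Y) \cap N(G)$, which by definition of $\beta(X,Y)$ efficiently distinguishes $X$ and $Y$.

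I do not expect any real obstacle in carrying this out: the hard work is packaged into \cref{thm:BottleneckNestedSet}, and the corollary is essentially an assembly of the pieces already on the table. The only verification requiring a small argument is the $\Aut(G)$-invariance of $\cB$, which reduces to the trivial observation that automorphisms send maximal cliques to maximal cliques.
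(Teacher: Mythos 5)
Your proposal is correct and follows essentially the same route as the paper: verify that $\cB$ consists of bottlenecks via \cref{lem:max-clique-dist-are-bottlenecks} and is $\Aut(G)$-invariant (the paper dismisses this as ``by definition''; your explicit check that $\phi(\beta(X,Y)) = \beta(\phi(X),\phi(Y))$ is the right justification), then apply \cref{thm:BottleneckNestedSet} and use the non-emptiness of $N^k(G,\beta(X,Y))$ to get the efficient-distinguishing property. No gaps.
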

\begin{proof}
    The sets~$\beta(X, Y)$ for distinct maximal cliques~$X$ and~$Y$ in~$G$ are bottlenecks by~\cref{lem:max-clique-dist-are-bottlenecks}, and so all $\beta(X,Y)$ are contained in the set $\cB$.
    Moreover, $\cB$ is clearly $\Aut(G)$-invariant by definition.
    By~\cref{thm:BottleneckNestedSet}, the set~$N(G)$ of tight separations obtained from~\cref{construction:N(G)} is well-defined, $\Aut(G)$-invariant and nested, and it in particular meets every~$\beta(X,Y)$.
    Thus, $N(G)$ efficiently distinguishes every two distinct maximal cliques in~$G$.
\end{proof}

The third and final step of our proof of~\cref{thm:canonicalchordalTD} is to show that~$N(G)$ induces the canonical tree-decomposition $\cT(G) = \cT(N(G))$, for which we will apply~\cref{lem:PointFiniteInducesTD}, and that the canonical \td\ $\cT(N(G))$ is indeed into cliques.
We first check that~$N(G)$ satisfies the assumption of~\cref{lem:PointFiniteInducesTD}, that is, $N(G)$ is point-finite.

\begin{lemma}\label{lem:NisPointFinite}
    Let $G$ be a locally finite, connected chordal graph, and let~$N$ be a nested set of tight separations of~$G$ which efficiently distinguishes every two distinct maximal cliques in~$G$.
    Then the separators of separations in~$N$ are cliques, and~$N$ is point-finite.
\end{lemma}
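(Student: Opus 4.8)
The plan is to prove two assertions: (a) every separator of a separation in $N$ is a clique, and (b) $N$ is point-finite. Assertion (a) should follow from the characterisation of chordal graphs via minimal separators (Theorem \ref{thm:CharacterisationChordalViaMinimalVertexSeparator}, Dirac). Indeed, each separation in $N$ is assumed to be \emph{tight}, hence by the remark in Section \ref{subsec:interplayTDandSEP} its separator is an inclusion-minimal $a$--$b$ separator for suitable vertices $a, b$, i.e.\ a minimal separator of $G$ in the sense defined in Section on minimal separators. Since $G$ is chordal, Theorem \ref{thm:CharacterisationChordalViaMinimalVertexSeparator} then says every such separator is a clique. This takes care of (a) directly.

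For (b), suppose towards a contradiction that some vertex $v$ of $G$ lies in the separators of infinitely many separations of $N$. Each such separator is, by (a), a clique containing $v$, and since $G$ is locally finite, $v$ has only finitely many neighbours; hence there are only finitely many cliques of $G$ containing $v$ at all (every clique through $v$ is a subset of $\{v\} \cup N_G(v)$, a finite set). So infinitely many separations of $N$ must share the \emph{same} separator $S \ni v$. Now I would use that $N$ is nested together with the fact that each separation in $N$ is proper (tight separations are proper) to derive a contradiction: a nested family of proper separations all having the same separator $S$ must be finite when $S$ is finite. Concretely, nestedness forces the sides on one shore to be linearly ordered by inclusion, and properness means each inclusion is strict; but all these sides lie ``between'' the two full components guaranteed by tightness and everything outside them is pinned down by $S$, so one cannot fit infinitely many strictly nested proper separations with the same finite separator. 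I would formalise this by picking, for distinct separations $\{A_i, B_i\}$ with $A_i \cap B_i = S$, orientations witnessing nestedness and observing $A_1 \subsetneq A_2 \subsetneq \cdots$ (say), each $A_i \setminus S$ nonempty, and each $A_i \setminus S$ a union of components of $G - S$; since $G - S$ has only finitely many components meeting... — wait, $G-S$ need not have finitely many components, but each $A_i \setminus A_{i-1}$ is a nonempty union of components of $G-S$, and there are infinitely many such disjoint nonempty pieces, which is fine set-theoretically. The cleaner route is: among separations with separator exactly $S$, distinguish them by which components of $G-S$ they put on the $A$-side; nestedness makes these ``component-sets'' a chain under inclusion, so there is no obstruction from $S$ being finite alone.

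Let me reconsider — the genuine finiteness input must be that each such separation in $N$ efficiently distinguishes two maximal cliques. The key observation: if $\{A, B\} \in N$ efficiently distinguishes maximal cliques $X, Y$, then by Lemma \ref{lem:max-cliques-dist-by-low-order-sep} (and the footnote after the definition of ``distinguishes''), $X \subseteq A$ with $X$ meeting $A \setminus B$, and $Y \subseteq B$ with $Y$ meeting $B \setminus A$; so $S = A \cap B$ is a subset of a maximal clique on each side but not equal to either. Since $S$ is a finite clique, it extends to only finitely many maximal cliques of $G$ (again by local finiteness: all are $\subseteq S \cup \bigcap_{s \in S} N_G(s)$, a finite set). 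Thus only finitely many \emph{pairs} $(X,Y)$ of maximal cliques can be efficiently distinguished by a separation with separator containing $v$; and for each such pair, $\beta(X,Y)$-type reasoning plus nestedness of $N$ bounds how many of its members lie in $N$. The cleanest finish: $N$ is nested, and for a fixed finite clique $S$, the maximal cliques $X$ with $S \subseteq X$ are finitely many, say $X_1, \dots, X_m$; a nested set of separations each of which separates some $X_i$ from some $X_j$ with separator $\subseteq$ a fixed finite set... I'll argue that at most finitely many separations in $N$ have a given vertex in their separator by combining (i) such a separator is a clique through $v$, (ii) there are finitely many maximal cliques through $v$, (iii) any separation in $N$ with $v$ in its separator efficiently distinguishes two of those finitely many maximal cliques, and (iv) a nested set contains, for each unordered pair of vertex sets, only finitely many separations efficiently distinguishing that pair — because efficient distinguishers of a fixed pair all have the same order and a nested family of same-order separations between two fixed finite sets is finite. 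Step (iv), or rather pinning down exactly why a nested family of efficient distinguishers of one pair is finite, is the main obstacle; I expect it to reduce to a short argument that such separations are linearly ordered ``between'' $X$ and $Y$ and each consecutive gap must absorb at least one vertex, of which there are finitely many since everything of interest is confined to a finite region determined by $X \cup Y$.
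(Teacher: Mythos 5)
Your part~(a) is correct and matches the paper exactly. For part~(b), you correctly reduce to the question of how many separations in $N$ can have a fixed finite separator $S$, but then you make a false assertion that derails the rest: you write ``$G-S$ need not have finitely many components.'' In fact, in a \emph{locally finite connected} graph, every finite set $S$ has only finitely many components in $G-S$, because each component of $G-S$ has a neighbour in $S$ (connectivity) and $S$ has only finitely many neighbours in total (local finiteness). Consequently there are only finitely many separations of $G$ with separator $S$ at all — no nestedness, properness, or efficient-distinguishing is needed. This is precisely the second half of the paper's two-line proof: a vertex $v$ lies in only finitely many cliques (local finiteness), and for each such finite clique $S$ there are only finitely many separations with separator $S$ (connected + locally finite), so $N$ is point-finite.

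Because you reject this elementary fact, you are forced into the much more roundabout route through the efficient-distinguishing hypothesis, maximal cliques through $v$, and ``step (iv).'' That detour is both unnecessary and not actually completed: step (iv) (``a nested family of efficient distinguishers of one fixed pair is finite'') is left as an unproved obstacle, and the claim that any separation in $N$ with $v$ in its separator must distinguish two maximal cliques \emph{through $v$} is not justified (an efficient $X$--$Y$ separator containing $v$ need not have $v \in X$ or $v \in Y$). So as written the proof of point-finiteness has a genuine gap; the fix is to notice the component-counting fact you dismissed.
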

\begin{proof}
    Since all separations in~$N$ are tight, so are their separators, which are hence cliques by~\cref{thm:CharacterisationChordalViaMinimalVertexSeparator}.
    Since~$G$ is locally finite, every vertex of~$G$ is contained in only finitely many cliques, and since~$G$ is connected, there are only finitely many separations with the same finite separator.
    Thus, every vertex of~$G$ is contained in only finitely many separators of separations in $N$, that is, $N$ is point-finite.
\end{proof}

With \cref{lem:NisPointFinite} at hand, we are now ready to prove~\cref{thm:canonicalchordalTD}.
\begin{proof}[Proof of \cref{thm:canonicalchordalTD}]
    Let $N(G)$ be obtained by applying~\cref{construction:N(G)} to the set~$\cB$ of all bottlenecks in~$G$.
    By~\cref{lem:StructureOfN(G)}, $N := N(G)$ is an $\Aut(G)$-invariant nested set of tight separations of~$G$ that efficiently distinguishes every two distinct maximal cliques in~$G$.
    By \cref{lem:NisPointFinite}, $N$ is point-finite, and the separations in $N$ are proper since they are tight.
    So \cref{lem:PointFiniteInducesTD} yields that $\cT \coloneqq \cT(N)$ is a \td\ $(T,\cV)$ such that the map $f \mapsto s^f$ is a bijection from $E(T)$ to $N$.
    Note that, by \cref{lem:PointFiniteInducesTD}, $\cT$ is regular, point-finite and canonical.

    It remains to show that $\cT$ is into cliques.
    All adhesion sets of~$\cT$ are cliques since $N = \{s^f \mid f \in E(T)\}$ and the separators of separations in $N$ are cliques by~\cref{lem:NisPointFinite}.
    Thus, every part $G_t := G[V_t]$ of $\cT$ is connected, since $G$ is connected.
    Moreover, as $G$ is chordal, so are its induced subgraphs $G_t$.

    Suppose for a contradiction that some $V_t$ is not a clique of $G$, i.e.\ there are two non-adjacent vertices $u,v \in V_t$.
    In particular, $V_t \setminus \{u,v\}$ separates $u$ and $v$ in the part $G_t = G[V_t]$ of $\cT$.
    Now let $X$ be an inclusion-minimal $u$--$v$ separator in $G_t$.
    Then~$X$ is non-empty, as the part $G_t$ of $\cT$ is connected, and~$X$ is a clique by~\cref{thm:CharacterisationChordalViaMinimalVertexSeparator}, as the induced subgraph $G_t$ of $G$ is chordal.

    By applying \cref{thm:CliqueAtTightSep} to the (full) components of~$G_t - X$ containing $u$ and $v$, respectively, there are two vertices $u', v' \in V_t \setminus X$ such that $X$ is a $u'$--$v'$ separator in $G_t$ and both $u'$ and $v'$ are complete to $X$.
    So since~$X$ is a clique, there are maximal cliques $X_{u'}$ and $X_{v'}$ of $G$ that contain $X \cup \{u'\}$ and $X \cup \{v'\}$, respectively.
    It is easy to see that, since the adhesion sets of $(T,\cV)$ are cliques, $X$ is not only a $u'$--$v'$ separator in the part $G_t$ but also in $G$.
    In particular, $X$ separates the cliques $X_{u'} \ni u'$ and $X_{v'} \ni v'$ in $G$. 
    Every $X_{u'}$--$X_{v'}$ separator in $G$ must contain their intersection $X_{u'} \cap X_{v'} \supseteq
     X$.
    Hence, the only $X_{u'}$--$X_{v'}$ separator of minimum size in $G$ is $X$ itself.
    Since $N = \{s^f \mid f \in E(T)\}$ efficiently distinguishes every two distinct maximal cliques such as $X_{u'}$ and $X_{v'}$ by assumption, some separation $s^f$ corresponding to an edge $f$ of $T$ separates $X_{u'}$ and $X_{v'}$ efficiently.
    By the above argument, the separator of~$s^f$ is $X$; in particular, $s^f$ separates $u' \in X_{u'} \setminus X$ and $v' \in X_{v'} \setminus X$.
    But $u', v'$ are in the common bag $V_t$ and thus cannot be separated by a separation $s^f$ corresponding to an edge $f$ of $T$, which is a contradiction.
\end{proof}

For experts, we remark that one may alternatively obtain a nested set like $N(G)$ by \cite[Theorem~1.3]{infinitesplinter}, as every maximal clique in a chordal graph yields a 'robust, regular profile' and by invoking \cref{lem:max-cliques-dist-by-low-order-sep} one may prove that each two of them are 'distinguishable'.
A further construction along the lines of~\cite{CanTreesofTDs} is outlined in the introduction.
In contrast, the nested set from \cite[Definition~5.2]{canonicalGD} does not work in general, as the 'tangles' induced by cliques $X$ have in general only order $2/3 \cdot |X|$, which as a consequence may make the two tangles induced by two distinct maximal cliques indistinguishable.
Finally, we can also not work with entanglements~\cite{entanglements} as the order of the separations in clique-distinguishing bottlenecks is not finitely bounded, which would be a necessary assumption for their~\cite[Theorem~4.2]{entanglements}.

\subsection{Proof of \texorpdfstring{\cref{thm:chordalTDmaximal}}{Theorem 3}} \label{sec:TDCoveringsMaxCliques}

\cref{thm:canonicalchordalTD} asserts that every locally finite, chordal graph admits a canonical \td\ into cliques.
The bags of this decomposition can be non-maximal cliques, and there are even finite chordal graphs which do not have a canonical \td\ into maximal cliques (see~\cref{ex:no-maximal-and-canonical}).
In this section, we prove~\cref{thm:chordalTDmaximal} which asserts that if the chordal graph~$\hat G$ we seek to decompose is a normal covering of a locally finite graph~$G$, then we can find a \td\ into maximal cliques, which is still~$\Gamma(p)$-canonical for the group~$\Gamma(p)$ of the deck transformations of the normal covering~$p: \hat G \to G$.

For this, recall that a \defn{covering} of a (loopless) graph $G$ is a surjective homomorphism $p \colon \hat G \to G$ such that $p$ restricts to an isomorphism from the edges incident to any given vertex $\hat v$ of $\hat G$ to the edges incident to its projection $v \coloneqq p_r(\hat v)$. 
A \defn{deck transformation} of a covering $p \colon \hat G \to G$ is an automorphism $\gamma$ of $\hat G$ that commutes with $p$, i.e.\ $p = p \circ \gamma$; we denote the group of deck transformations of a covering $p$ by \defn{$\Gamma(p)$}.
Further, a covering $p \colon \hat G \to G$ is \defn{normal} if the group $\Gamma(p)$ of deck transformations of $p$ acts transitively on the fibers $p^{-1}(v)$ of $p$.
We remark that $r$-local coverings are normal, so the following theorem is applicable to $r$-local coverings and we will detail this application in~\cref{sec:COMPcanonicalGDviaCoverings}.

Let us now restate~\cref{thm:chordalTDmaximal}, the main result of this section:
\chordalTDmaximal*

Our plan to prove~\cref{thm:chordalTDmaximal} is simple:
We will take the canonical \td\ $(T, \cV)$ of~$\hat G$ into cliques as given by~\cref{thm:canonicalchordalTD}, and we then iteratively contract the $\Gamma(p)$-orbits of edges $st$ of the decomposition tree~$T$ which witness that~$(T, \cV)$ is not into maximal cliques, that is, with $V_s \subseteq V_{t}$.
The difficult part of the proof is to show that this construction indeed results in a $\Gamma(p)$-canonical \td\ into its maximal cliques. \\

One ingredient to our proof of~\cref{thm:chordalTDmaximal} is a simple observation about the interaction of cliques in~$\hat G$ with the group~$\Gamma(p)$ of deck transformations of the covering~$p$:

\begin{lemma}\label{lem:DeckTrafoActFreelyOnCliques}
    Let $G$ be a connected graph and $p \colon \hat G \to G$ a covering.
    Then for every clique~$K$ in $\hat G$, the intersection $K \cap \gamma(K)$ is empty whenever $\gamma \in \Gamma(p) \setminus \{ \id_{\hat G}\}$.
\end{lemma}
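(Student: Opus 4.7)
The plan is to combine two standard properties of coverings: the projection $p$ is injective on every clique of $\hat G$, and a deck transformation with a fixed vertex must be the identity on the connected component of that vertex.

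First, I would check that $p$ restricts to an injection on $K$. If $|K|\le 1$ this is trivial, so assume $|K|\ge 2$ and fix $\hat v\in K$; every other vertex of $K$ is then a neighbour of $\hat v$ in $\hat G$. The defining property of a covering says that $p$ maps the edges incident to $\hat v$ bijectively to the edges incident to $p(\hat v)$, so two distinct neighbours of $\hat v$ project to two distinct neighbours of $p(\hat v)$ (and none project to $p(\hat v)$ itself, as $G$ is loopless). Hence $p|_K$ is injective.

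Next, I would show that $K\cap\gamma(K)\neq\emptyset$ forces $\gamma$ to have a fixed point in $K$: given $\hat u\in K\cap\gamma(K)$, write $\hat u=\gamma(\hat w)$ for some $\hat w\in K$; then $p(\hat u)=p(\gamma(\hat w))=p(\hat w)$ by $p\circ\gamma=p$, and the injectivity of $p|_K$ yields $\hat u=\hat w$, so $\gamma(\hat u)=\hat u$.

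Finally, I would deduce $\gamma=\id_{\hat G}$ from this fixed point by a short path-induction: if $\gamma(\hat v)=\hat v$ and $\hat u$ is a neighbour of $\hat v$, then $\gamma(\hat v\hat u)=\hat v\gamma(\hat u)$ is an edge at $\hat v$ projecting to the same edge of $G$ as $\hat v\hat u$, and the bijectivity of $p$ on edges at $\hat v$ forces $\gamma(\hat u)=\hat u$; iterating along a path from $\hat u$ to any other vertex extends the fixed set to its whole connected component. The only subtle point is that the lemma does not explicitly require $\hat G$ to be connected — a priori a non-identity deck transformation could act as the identity on one component and non-trivially on another — but in the applications (the normal covers relevant for \cref{thm:chordalTDmaximal}) $\hat G$ is connected, and the argument then closes. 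Apart from this implicit-connectivity wrinkle, every step is a direct consequence of the definition of a covering.
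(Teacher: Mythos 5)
Your proof is correct and follows essentially the same route as the paper's: both arguments combine the fact that distinct vertices of a clique lie in distinct fibers (since $G$ is simple and $p$ is locally bijective on edges) with the freeness of the deck transformation action, which you prove by path-induction and the paper simply cites. The connectivity caveat you raise applies equally to the paper's own one-line appeal to freeness and is harmless in all applications, where $\hat G$ is connected.
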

\begin{proof}
    As $G$ is simple, it has neither loops nor parallel edges. Thus, every two distinct elements of the same fiber of $p$ have distance $>2$ in $\hat G$. In particular, if two cliques in~$\hat G$ meet, then their union cannot contain two distinct vertices of the same fiber of $p$. This immediately yields the statement, since $\Gamma(p)$ acts freely on~$\hat G$.
\end{proof}

As a sidenote, we can deduce from~\cref{thm:canonicalchordalTD,lem:DeckTrafoActFreelyOnCliques} that the group of deck transformations of a normal covering is free if the cover is chordal:

\begin{corollary}\label{cor:rlocchordalhasfreedecktrafo}
    Let $G$ be a locally finite, connected graph and $p \colon \hat G \to G$ a covering.
    If $\hat G$ is chordal, then its group $\Gamma(p)$ of deck transformations is free.
\end{corollary}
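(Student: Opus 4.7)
The plan is to apply \cref{thm:canonicalchordalTD} to obtain a $\Gamma(p)$-canonical tree-decomposition of $\hat G$ into cliques, then to show that $\Gamma(p)$ acts freely on the decomposition tree, and to conclude by Bass--Serre theory, which states that a group acting freely on a tree (no fixed vertex, no edge inversion) is free.

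First, since $p$ is a covering and $G$ is locally finite, $\hat G$ inherits local finiteness. After reducing to the case where $\hat G$ is connected (otherwise $\Gamma(p)$ permutes the components of $\hat G$, and one argues on a single orbit), \cref{thm:canonicalchordalTD} provides a canonical tree-decomposition $(T, \cV)$ of $\hat G$ into cliques; restricting the action of $\Aut(\hat G)$ on $T$ to $\Gamma(p)$ yields a $\Gamma(p)$-canonical action commuting with $t \mapsto V_t$. This tree-decomposition is moreover regular (as established in \cref{lem:cTN-is-TD}), and a short argument shows that in any regular tree-decomposition of a connected graph every bag $V_t$ and every adhesion set $V_f$ is nonempty: an empty bag (at a leaf) would produce a non-proper separation, and an empty internal bag or adhesion set would disconnect $\hat G$.

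The central step is to verify freeness of the action on $T$. If some non-identity $\gamma \in \Gamma(p)$ fixed a node $t$, then $\gamma(V_t) = V_t$ would contradict \cref{lem:DeckTrafoActFreelyOnCliques}, since $V_t$ is a nonempty clique. Analogously, if $\gamma$ inverted an edge $f = st$, then it would stabilise the nonempty clique $V_f = V_s \cap V_t$, again contradicting \cref{lem:DeckTrafoActFreelyOnCliques}. Hence $\Gamma(p)$ acts freely on $T$, and Bass--Serre theory yields that $\Gamma(p)$ is free. The main technical point to be settled is the non-emptiness of bags and adhesion sets, which is what allows us to invoke \cref{lem:DeckTrafoActFreelyOnCliques}; this reduces entirely to the regularity of $(T, \cV)$ together with the connectedness of $\hat G$.
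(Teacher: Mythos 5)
Your proposal is correct and follows essentially the same route as the paper: apply \cref{thm:canonicalchordalTD} to get a $\Gamma(p)$-canonical tree-decomposition into cliques, use \cref{lem:DeckTrafoActFreelyOnCliques} to see that $\Gamma(p)$ acts freely on the decomposition tree, and conclude that a group acting freely on a tree is free. The paper states the freeness of the action in one line, whereas you spell out the details (non-emptiness of bags and adhesion sets via regularity and connectedness, and the exclusion of edge inversions), which is a welcome elaboration rather than a different argument.
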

\begin{proof}
    Set $\Gamma \coloneqq \Gamma(p)$.
    By \cref{thm:canonicalchordalTD}, $\hat G$ admits a $\Gamma$-canonical \td\ $(T,\cV)$ into cliques.
    \cref{lem:DeckTrafoActFreelyOnCliques} ensures that $\Gamma$ acts freely on the decomposition tree $T$.
    Hence, it follows from \cite[Theorem~4.2.1]{lohgeometric} that $\Gamma$ is free.
\end{proof}

\begin{proof}[Proof of \cref{thm:chordalTDmaximal}]
    Let $(T,\cV)$ be any \td\ of $\hat G$ that is point-finite, regular, $\Gamma(p)$-canonical and into cliques, such as the one given by~\cref{thm:canonicalchordalTD}.

    \begin{claim} \label{cl:NoSubsetsImpliesMaxCliques}
        If there is no edge $f=st$ of $T$ such that $V_s \subseteq V_t$, then $(T,\cV)$ is into the maximal cliques of $G$, i.e. \looseness=-1
        \begin{enumerate}
            \item \label{item:maximalCliqueUniqueNode} for every maximal clique $X$ of $\hat G$, there is a unique node $t(X)$ of $T$ with $X = V_{t(X)}$, and
            \item \label{item:everyVtMaximalClique} every $V_t$ is a maximal clique.
        \end{enumerate}
    \end{claim}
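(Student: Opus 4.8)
The plan is to prove the two items in turn, each time feeding the hypothesis into the unique path of $T$ joining two nodes together with the coherence axiom~\cref{T2}. Note first that $\hat G$, covering the locally finite graph $G$, is itself locally finite, so every bag $V_t$ is a \emph{finite} clique; in particular \cref{lem:CliqueContainedInBag} applies to $\hat G$.

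For \cref{item:maximalCliqueUniqueNode} I would first prove \emph{existence} of $t(X)$: a maximal clique $X$ of $\hat G$ is finite, so by~\cref{lem:CliqueContainedInBag} some bag $V_t$ contains $X$; since $V_t$ is itself a clique and $X$ is maximal, $X = V_t$. For \emph{uniqueness}, suppose $V_s = X = V_t$ with $s \neq t$ and let $s'$ be the neighbour of $s$ on the $s$--$t$ path in $T$. Each $v \in X = V_s \cap V_t$ has $T_v$ connected and containing $s$ and $t$, hence also $s'$, so $v \in V_{s'}$; thus $V_s = X \subseteq V_{s'}$, contradicting the hypothesis applied to the edge $f = ss'$.

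For \cref{item:everyVtMaximalClique}, fix a node $t$. Its bag $V_t$ is a clique, hence lies in some maximal clique $X$, and by the existence part there is a node $s$ with $V_s = X \supseteq V_t$. If $s=t$ we are done; otherwise let $t'$ be the neighbour of $t$ on the $t$--$s$ path and argue as before — every $v \in V_t \subseteq V_s$ lies in $V_{t'}$ by~\cref{T2} — to conclude $V_t \subseteq V_{t'}$, contradicting the hypothesis on the edge $f = tt'$.

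Finally, \cref{item:maximalCliqueUniqueNode} and \cref{item:everyVtMaximalClique} together say precisely that $t \mapsto V_t$ is a bijection from $V(T)$ onto the maximal cliques of $\hat G$, i.e.\ that $(T,\cV)$ is into the maximal cliques. I expect no genuine obstacle in this claim — it is a short application of~\cref{T2}; the one thing to keep in mind is that the hypothesis ``no edge $st$ of $T$ has $V_s\subseteq V_t$'' is symmetric in $s$ and $t$, hence forbids such containments in \emph{both} directions, which is exactly what both parts of the argument exploit.
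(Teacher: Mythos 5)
Your proof is correct and follows essentially the same route as the paper's: existence of $t(X)$ via \cref{lem:CliqueContainedInBag} and maximality of $X$, uniqueness via \cref{T2} along the $s$--$t$ path yielding a forbidden containment, and \cref{item:everyVtMaximalClique} via the first edge of the $t$--$t(X)$ path. The only cosmetic difference is that you only examine the first edge of the path in the uniqueness argument, whereas the paper observes that all bags along the path equal $X$ before reaching the same contradiction; the extra remark about finiteness of cliques in $\hat G$ is a welcome sanity check but implicit in the paper's setup.
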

    
    \begin{claimproof}
        \cref{item:maximalCliqueUniqueNode}:
        First, we show that $t(X)$ exists.
        By \cref{lem:CliqueContainedInBag}, there exists a node $t$ of $T$ whose bag $V_t$ contains~$X$.
        As $(T,\cV)$ is into cliques and $X$ is a maximal clique, $V_t$ is indeed equal to $X$.
        
        Secondly, we show that $t(X)$ is unique.
        Suppose for a contradiction that there are two distinct nodes $t,t'$ of $T$ with $V_t, V_{t'} = X$.
        Then \cref{T2} ensures that all vertices $s$ on the $t$--${t'}$ path in $T$ satisfy $X \subseteq V_s$.
        As $(T,\cV)$ is into cliques and $X$ is a maximal clique, we thus have $X = V_s$.
        Hence, every edge $f = ss'$ on the $t$--${t'}$ path in $T$ contradicts the assumption, as $V_{s} = X = V_{s'}$.
        
        \cref{item:everyVtMaximalClique}:
        Suppose for a contradiction that $V_t$ is not a maximal clique.
        Let $X$ be a maximal clique that contains~$V_t$.
        Let $f = ts$ be the first edge on the $t$--$t(X)$ path in $T$.
        Then \cref{T2} ensures that $V_t \subseteq V_s$, which contradicts our assumption.
    \end{claimproof}
    
    We will now iteratively contract the~$\Gamma(p)$-orbits of edges~$st$ of~$T$ with~$V_s \subseteq V_t$, with the aim of obtaining a \td\ satisfying the assumption of~\cref{cl:NoSubsetsImpliesMaxCliques}.
    Note that since $(T,\cV)$ is $\Gamma(p)$-canonical and regular, the deck transformations of $p$ actuniquely on $T$ such that this action commutes with $t \mapsto V_t$,  by \cref{lem:uniqueaction-on-regular-TD}.

    Let us first note that this is a countable procedure in that~$T$ is countable and so is the number of~$\Gamma(p)$-orbits of~$E(T)$.
    Indeed, since $(T,\cV)$ is point-finite, all $T_v$ are finite, and since the connected graph $G$ is locally finite, $G$ is countable.
    Hence, $T = \bigcup_{v \in V(G)} T_v$ is the union of countably many finite graphs, and thus countable as well.

    Now pick an enumeration~$M_1, M_2, \dots$ of the $\Gamma(p)$-orbits of~$E(T)$.
    Set $I_0 \coloneqq \emptyset$, and write~$[i] := \{1, \dots, i\}$ for~$i \in \N$.
    We iteratively construct an inclusion-wise increasing sequence of $I_i \subseteq [i]$ and consider \td s $(T^i,\cV^i)$ of $\hat G$ obtained by contracting $M^i \coloneqq \bigcup_{j \in I_i} M_j$ in $T$, i.e.\ set $T^i \coloneqq T / M^i$, set $V_t^i \coloneqq \bigcup_{s \in C} V_s$ if $t$ is the contraction vertex given by some component $C$ of $(V(T),M^i)$, and else $V^i_t \coloneqq V_t$.
    The~$I_i$ will be constructed such that $(T^i, \cV^i)$ is into cliques, and we have $V^i_s \nsubseteq V^i_t$ and $V^i_s \nsupseteq V^i_t$ for every edge $f = st \in \bigcup_{j \in [i] \setminus I_i} M_j$.
    It is immediate from the definition that the \td s $(T^i, \cV^i)$ are point-finite, regular and $\Gamma(p)$-canonical, and that the $\Gamma(p)$-orbits of $E(T^i)$ are the $M_j$ with $j \in [i] \setminus I_i$ and the $M_j$ with~$j > i$.

    Assume that $I_{i-1}$ is defined for some integer $i \geq 1$.
    Let $f_i = st$ be an edge in $M_i$.
    If $V_s \nsubseteq V_t$ and $V_s \nsupseteq V_t$, then $I_i \coloneqq I_{i-1}$ is as desired.
    Otherwise, we claim that $I_i \coloneqq I_{i-1} \cup \{i\}$ is as desired.
    For this, we make the following observation, which is also the only part of the proof relying on the assumption that~$\hat G$ is a covering of a graph~$G$.
    \begin{claim} \label{cl:GammaOrbitIsMatching}
        The~$\Gamma(p)$-orbit~$M$ of an edge $st$ of $T$ with $V_s \subseteq V_t$ is a matching in~$T$.
    \end{claim}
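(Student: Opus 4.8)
The plan is to argue by contradiction, the only real input being \cref{lem:DeckTrafoActFreelyOnCliques}: a nontrivial deck transformation moves every clique of $\hat G$ off itself. Suppose $M$ is not a matching, so it contains two distinct edges meeting in a node $r$, say $\gamma_1(st)$ and $\gamma_2(st)$ for some $\gamma_1,\gamma_2 \in \Gamma(p)$. Since $\Gamma(p)$ acts on $T$ commuting with $t \mapsto V_t$ (as recorded above for our regular, $\Gamma(p)$-canonical $(T,\cV)$), I may apply the automorphism $\gamma_1^{-1}$ of $T$ and assume $\gamma_1 = \id$: then $st$ and $\gamma(st) = \gamma(s)\gamma(t)$ are distinct edges of $T$ meeting in the node $\gamma_1^{-1}(r)$, where $\gamma \coloneqq \gamma_1^{-1}\gamma_2$. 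As $\gamma_1(st) \neq \gamma_2(st)$ we get $st \neq \gamma(st)$, hence $\gamma \neq \id$. The shared node lies in $\{s,t\}\cap\{\gamma(s),\gamma(t)\}$, so exactly one of the equalities $s = \gamma(s)$, $t = \gamma(t)$, $s = \gamma(t)$, $t = \gamma(s)$ must hold.

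Two preliminary observations feed into every case. First, since $\hat G$ is locally finite and $(T,\cV)$ is into cliques, all bags are finite cliques of $\hat G$. Second, $V_s \neq \emptyset$: otherwise the adhesion set $V_f = V_s \cap V_t$ of the edge $f = st$ is empty, so the induced separation $s^f$ has empty separator and, as $\hat G$ is connected, cannot be proper, contradicting regularity of $(T,\cV)$. Hence both $V_s$ and $V_t \supseteq V_s$ are nonempty finite cliques.

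Now I would dispatch the four cases using \cref{lem:DeckTrafoActFreelyOnCliques} and the $\Gamma(p)$-canonical identity $V_{\gamma(x)} = \gamma(V_x)$. If $s = \gamma(s)$, then $\gamma(V_s) = V_{\gamma(s)} = V_s$, so $V_s \cap \gamma(V_s) = V_s \neq \emptyset$, contradicting \cref{lem:DeckTrafoActFreelyOnCliques} for the clique $V_s$ and $\gamma \neq \id$; the case $t = \gamma(t)$ is identical with $V_t$ in place of $V_s$. If $s = \gamma(t)$, then $\gamma(V_t) = V_{\gamma(t)} = V_s \subseteq V_t$, and since $V_t$ is finite this forces $\gamma(V_t) = V_t$, giving the same contradiction. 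Finally, if $t = \gamma(s)$, then $\gamma(V_s) = V_{\gamma(s)} = V_t \supseteq V_s$, and finiteness of $V_s$ forces $\gamma(V_s) = V_s$, again a contradiction. Thus $M$ is a matching.

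I do not expect a genuine obstacle: this is a short case check. The two points that need a little care are the reduction placing one of the two colliding edges in the normal form $st$ (so that $V_s \subseteq V_t$ is literally available), and the quick verification that bags are nonempty, which is where connectivity of $\hat G$ is used. All the structural content is packaged in \cref{lem:DeckTrafoActFreelyOnCliques}, and this is indeed the single place in the proof of \cref{thm:chordalTDmaximal} where the covering hypothesis on $\hat G$ enters.
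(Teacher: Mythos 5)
Your proof is correct and follows essentially the same route as the paper: derive a nontrivial deck transformation $\gamma$ from two incident orbit-edges, then show via a short case analysis on the shared node that $\gamma$ would have to move some nonempty bag-clique onto a set meeting it, contradicting \cref{lem:DeckTrafoActFreelyOnCliques}. The paper arranges for $\gamma$ to map one edge directly onto the other, collapsing your four cases into two, and leaves the nonemptiness of bags implicit (it follows from regularity and connectedness, as you observe), but these are bookkeeping differences, not a different argument.
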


    \begin{claimproof}
        Suppose for a contradiction that $M$ contains two incident edges $m_1 m_2$ and $m_2 m_3$ of~$T$.
        Then there is a deck transformation~$\gamma \in \Gamma(p)  \setminus \{\id_{\hat G}\}$ that maps $m_1 m_2$ to $m_2 m_3$ (as unoriented edges).
        We cannot have~$\gamma(m_2) = m_2$, since~$\gamma$ would then fix the clique~$V_{m_2}$ contradicting~\cref{lem:DeckTrafoActFreelyOnCliques}.
        So we must have~$\gamma(m_1) = m_2$. But by our assumption on~$st$, this implies either~$V_{m_1} \subseteq V_{m_2} = \gamma(V_{m_1})$ or~$V_{m_1} \supseteq V_{m_2} = \gamma(V_{m_1})$.
        In both cases, the image~$\gamma(V_{m_1})$ of the clique~$V_{m_1}$ meets~$V_{m_1}$, again contradicting~\cref{lem:DeckTrafoActFreelyOnCliques}.
    \end{claimproof}
   
    Now the \td\ $(T^i,\cV^i)$ of $\hat G$ is (canonically isomorphic to) the one obtained from $(T^{i-1}, \cV^{i-1})$ by contracting $M_i$ in $T^{i-1}$.
    By~\cref{cl:GammaOrbitIsMatching}, only independent edges $st$ with, say, $V^{i-1}_s \subseteq V^{i-1}_t$ have been contracted, so $(T^i,\cV^i)$ is still into cliques.

    It remains to consider the limit step: 
    Set $I \coloneqq \bigcup_{i \in \N} I_i$, and let $(T',\cV')$ be the \td\ of $\hat G$ obtained from $(T,\cV)$ by contracting $\bigcup_{j \in I} M_j$ in $T$.
    Again, $(T', \cV')$ is by definition point-finite, regular, $\Gamma(p)$-canonical, and we have $V'_s \nsubseteq V'_t$ and $V'_s \nsupseteq V'_t$ for every edge $st \in E(T')$.
    Finally, every bag $V'_t$ is a clique of $G$, as otherwise there would have been a bag $V^i_{t'} \subseteq V'_t$ which was not a clique as well, contradicting the recursive construction.
    Altogether, $(T', \cV')$ satisfies the assumptions of~\cref{cl:NoSubsetsImpliesMaxCliques}, completing the proof.
\end{proof}

\section{Canonical graph-decompositions via coverings for locally chordal graphs}\label{sec:COMPcanonicalGDviaCoverings}

In this section, we use our results on chordal graphs for the study of locally chordal graphs.
With our applications of~\cref{thm:canonicalchordalTD,thm:chordalTDmaximal}, we contribute to the extension of characterisations of chordal graphs to the locally chordal setting.
Our results in particular give the first known characterisation of a graph class by a property of the canonical graph-decomposition~$\cH_r(G)$ that displays its $r$-global structure, as introduced in~\cite{canonicalGD}:

\Hrintcliques*

\noindent We give all definitions needed to understand and prove~\cref{thm:Hr-int-cliques} in the course of this section.

\subsection{Background: \texorpdfstring{$r$}{r}-local coverings and \texorpdfstring{$r$}{r}-locally chordal graphs} \label{sec:Background:LocalCoverings}

Let $r \in \N$.
A (graph) homomorphism $p \colon \hat G \to G$ is \defn{$r/2$-ball-preserving} if $p$ restricts to an isomorphism from $B_{\hat G}(\hat v, r/2)$ to $B_{G}(v, r/2)$ for every vertex $\hat v$ of $\hat G$ and $v \coloneqq p_r(\hat v)$.
Thus, a surjective homomorphism $p \colon \hat G \to G$ is a covering of a (simple) graph $G$ if and only if $p$ is $1$-ball preserving.

In \cite{canonicalGD} the \defn{$r$-local covering $p_r \colon G_r \to G$} of a connected graph $G$ is introduced as the covering of~$G$ whose characteristic subgroup is the `$r$-local subgroup' of the fundamental group of $G$.
As the formal definition is not relevant to this paper, we refer the reader to \cite[\S 4]{canonicalGD} for details.
In this paper, we will often use that the $r$-local covering preserves the $r/2$-balls.
Indeed, the $r$-local covering is the universal $r/2$-ball-preserving covering of $G$ \cite[Lemma 4.2, 4.3 \& 4.4]{canonicalGD}, i.e.\ for every $r/2$-ball-preserving covering $p \colon \hat G \to G$ there exists a covering $q \colon G_r \to \hat G$ such that $p_r = p \circ q$.
Following this equivalent description of the $r$-local covering, it is also defined for non-connected graphs $G$.

We refer to the graph $G_r$ as the \defn{$r$-local cover} of $G$.
Note that the $0$-, $1$- and $2$-local covers of a (simple) graph $G$ are forests. For a reader familiar with coverings, we remark that the $0$-, $1$- and $2$-local coverings are all indeed the universal covering of $G$.

A graph is \defn{$r$-locally chordal} if its balls of radius $r/2$ are chordal.
The following theorem gives various characterisations of $r$-locally chordal graphs; for our applications, the equivalence between~\cref{basic:i} and~\cref{basic:iii} is the relevant one.

\begin{theorem}[{\cite[Theorem~1]{LocallyChordal}}]\label{BasicCharacterization}
    Let $G$ be a (possibly infinite) graph and let $r \geq 3$ be an integer. The following are equivalent:
    \begin{enumerate}
        \item \label{basic:i} $G$ is $r$-locally chordal.
        \item\label{basic:ii} $G$ is $r$-chordal and wheel-free.
        \item \label{basic:iii} The $r$-local cover $G_r$ of $G$ is chordal.
        \item\label{basic:iv} Every minimal $r$-local separator of $G$ is a clique. 
    \end{enumerate}
\end{theorem}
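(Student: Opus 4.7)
The plan is to prove the four-way equivalence by establishing (i)$\Leftrightarrow$(iii), (i)$\Leftrightarrow$(ii), and (i)$\Leftrightarrow$(iv) separately, each via a different structural feature. For (i)$\Leftrightarrow$(iii), the key input is that $p_r$ restricts to isomorphisms $B_{G_r}(\hat v, r/2) \to B_G(v, r/2)$, so a ball of radius $r/2$ in $G_r$ is chordal if and only if the corresponding ball in $G$ is. The direction (iii)$\Rightarrow$(i) is then immediate: chordality is inherited by induced subgraphs. For the converse (i)$\Rightarrow$(iii), short induced cycles of length $\leq r$ in $G_r$ fit in an $r/2$-ball of $G_r$, hence in a chordal ball of $G$, and must be chorded. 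To rule out longer induced cycles in $G_r$, I would exploit the universal property of $p_r$: the characteristic subgroup $(p_r)_*\pi_1(G_r)$ equals the $r$-local subgroup of $\pi_1(G)$, which is generated by loops lying inside $r/2$-balls of $G$; by chordality of these balls, those generators are themselves products of triangle loops in $G$. This strong constraint on $\pi_1(G_r)$, combined with a combinatorial disk-filling argument, should rule out any induced cycle in $G_r$ of length $\geq 4$.

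For (i)$\Leftrightarrow$(ii), the direction (i)$\Rightarrow$(ii) locates forbidden substructures inside balls: an induced cycle of length $\ell \in [4,r]$ fits in $B_G(v, r/2)$ around any of its vertices, and the rim of an induced wheel $W_\ell$ (with $\ell \geq 4$) fits in $B_G(h, 1) \subseteq B_G(h, r/2)$ around its hub $h$, so chordality of balls forbids both. For the converse, assuming $r$-chordality and wheel-freeness, suppose $B = B_G(v, r/2)$ contains an induced cycle $C$ of length $\geq 4$; by $r$-chordality one has $|C| > r$. Analysing how shortest paths from $v$ to $C$ interact with $C$, one produces either a strictly shorter induced cycle in $B$ (for an inductive descent on cycle length) or, when $v$ is joined to sufficiently many cycle vertices in an extremal configuration, an induced wheel at $v$, contradicting wheel-freeness.

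The remaining equivalence (i)$\Leftrightarrow$(iv) is a localisation of Dirac's characterisation \cref{thm:CharacterisationChordalViaMinimalVertexSeparator}: under a suitable definition of a minimal $r$-local separator as a minimal $u$--$v$ separator with $u, v$ at $G$-distance $\leq r$ lying inside some $r/2$-ball of $G$, $r$-local chordality translates, via Dirac applied inside each such ball, to the assertion that all such separators are cliques. The main obstacle I expect is the direction (i)$\Rightarrow$(iii): passing from purely local chordality of $G$ to chordality of the (typically infinite) cover $G_r$ requires controlling induced cycles in $G_r$ that no single $r/2$-ball of $G_r$ can witness, which calls for a genuinely topological argument exploiting the universal $r/2$-ball-preserving property of $p_r$ rather than a direct induced-subgraph inheritance argument.
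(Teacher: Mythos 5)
This statement is not proved in the paper at all: it is quoted verbatim from the companion paper \cite[Theorem~1]{LocallyChordal} and used here purely as a black box, so there is no in-paper argument to measure your sketch against. Judged on its own terms, your proposal correctly identifies the easy implications (chordality of $G_r$ passes down to $r/2$-balls via the ball-preserving property of $p_r$, and short holes or wheel rims sit inside balls), but each of the three hard steps is asserted rather than argued, and the central one rests on an intermediate claim that is demonstrably too weak.

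Concretely, in your proof of \cref{basic:i}$\Rightarrow$\cref{basic:iii} you reduce chordality of the balls to the statement that the $r$-local subgroup of $\pi_1(G)$ is generated by (conjugates of) triangle loops, and then hope that this ``strong constraint on $\pi_1(G_r)$'' plus an unspecified disk-filling argument excludes induced cycles of length $\geq 4$ in $G_r$. That constraint alone cannot suffice: take $G = K_{2,2,2}$ (the octahedron) and $r=3$. Every loop inside a $3/2$-ball (a $4$-wheel) is a product of conjugates of triangle loops, and since the triangles of the octahedron bound a $2$-sphere, the $3$-local subgroup is all of $\pi_1(G)$; hence $G_3 \cong K_{2,2,2}$, which contains induced $4$-cycles. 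So any argument that uses only ``the characteristic subgroup is generated by triangles'' is doomed; the disk-filling step must re-use chordality (equivalently, the absence of wheels and short holes) of the balls in an essential way, and supplying that argument is precisely the substance of this implication --- it is missing from your sketch. The other equivalences have gaps of a different kind: for \cref{basic:i}$\Leftrightarrow$\cref{basic:iv} you substitute ``a suitable definition'' of a minimal $r$-local separator (a minimal separator of two nearby vertices inside some ball), but statement \cref{basic:iv} refers to the established notion of $r$-local separators, which is not the same thing, so applying Dirac's theorem (\cref{thm:CharacterisationChordalViaMinimalVertexSeparator}) ball by ball does not prove the stated equivalence; and in \cref{basic:ii}$\Rightarrow$\cref{basic:i} the descent ``either a strictly shorter induced cycle or an induced wheel at $v$'' is only a plan --- keeping the shortcut cycles induced while rerouting a long hole through geodesics from the centre is exactly where the work lies, and for odd $r$ one must also be careful with the definition of the ball of half-integral radius. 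If you want a complete argument, you should either work through the proof in \cite{LocallyChordal} or rebuild these steps in full; as written, the hardest direction is an appeal to an argument that, in the form stated, cannot exist.
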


\subsection{Background: Graph-decompositions and coverings} \label{sec:Background:GraphDecsAndCoverings}

Graph-decompositions, as introduced in~\cite{canonicalGD}, generalize the notion of tree-decompositions by allowing the bags $V_h$ of a decomposition $(H, \cV)$ to be arranged along a general graph $H$ instead of a tree.

Let $G$ be a graph.
A \defn{\gd} of $G$ is a pair $\cH = (H, \cV)$ consisting of a graph $H$ and a family~$\cV$ of vertex-subsets $V_h$ of $G$ indexed by the nodes $h$ of $H$ which satisfies
\begin{enumerate}[label=(H\arabic*)]
    \item \label{axiomH1} $G = \bigcup_{h \in H} G[V_h]$, and
    \item \label{axiomH2} for every vertex $v$ of $G$, the subgraph $H[W_v]$ of $H$ induced on the set $W_v$ of nodes whose bags contain~$v$ is connected.
\end{enumerate}
We refer to $H$ as the \defn{model graph}, and also say that $(H, \cV)$ is a \defn{graph-decomposition over} or \defn{modelled on $H$}.
The vertex-subsets $V_h$ are called the \defn{bags} of $\cH$, and the $W_v$ are called the \defn{co-bags} of $\cH$.
One may flesh out the bags and co-bags by choosing spanning subgraphs $G_h$ of $G[V_h]$ with $G = \bigcup_{h \in H} G_h$ and connected, spanning subgraphs $H_v$ of $H[W_v]$.
Then the $G_h$ are the \defn{parts} and the $H_v$ are the \defn{co-parts} of the decomposition $\cH$.

Given a normal covering $p \colon \hat G \to G$ of a graph~$G$ and a~$\Gamma(p)$-canonical \td\ of~$\hat G$, we can obtain a graph-decomposition of~$G$ by `folding' the \td\ along~$p$, as described by the action of~$\Gamma(p)$:

\begin{construction}[{\cite[Construction~3.8 and Lemma~3.9]{canonicalGD}}]
\label{const:TDFoldingToGD}
    Let $p \colon \hat G \to G$ be any normal covering of a (possibly infinite) connected graph $G$.
    Let $(T, \hat \cV)$ be a $\Gamma$-canonical tree-decomposition of $\hat G$, where $\Gamma \coloneqq \Gamma(p)$ is the group of deck transformations of $p$.
    We define the \gd\ $(H,\cV)$ \defn{obtained from $(T,\hat \cV)$ by folding via $p$} as follows.
    The model graph $H$ is the orbit-graph of $T$ under the action of $\Gamma$, i.e.\ $H = T/
    \Gamma$. 
    Denote the quotient map from $T$ to the orbit-graph $H = T / \Gamma$ also by $p$.
    The bag $V_h$ corresponding to a node $h$ of $H$ is $p(\hat V_t)$ for a vertex (equivalently: every vertex) $t$ in the $\Gamma$-orbit $h$.
    The parts $G_h$ of $(H,\cV)$ are the projections $p(\hat G[\hat V_t])$.
    As co-parts $H_v$ of $(H,\cV)$ we choose the projections $p(T_{\hat v})$ of the co-part $T_{\hat v}$ of $\cT$ corresponding to a (equivalently: every) lift $\hat v$ of $v$ to $\hat G$, where we denote also the quotient map from $T$ to the orbit-graph $H = T / \Gamma$ by $p$.
\end{construction}

\subsection{Coverings and Cliques}

In this section, we study the interactions of an $r$-local covering~$p_r: G_r \to G$ with the (maximal) cliques of~$G_r$ and~$G$.
Some results in this section also appear in~\cite{localGlobalChordal} taking a slightly different perspective (see \cite[Lemma~4.8]{localGlobalChordal} and its proof, and our proof of~\cref{lem:FoldingIntoCliques}~\cref{item:MaxCliquesToMaxCliques}).

\begin{lemma}\label{lem:FoldingIntoCliques}
    Let $G$ be a (possibly infinite) graph and $r \geq 3$ an integer.
    Let $\cT$ be
    a $\Gamma(p_r)$-canonical \td\ of $G_r$, and let $\cH$ be the \gd\ of $G$ obtained from $\cT$ by folding via $p_r$.
    \begin{enumerate}
        \item \label{item:CliquesToCliques} $\cT$ is into cliques if and only if $\cH$ is into cliques.
        \item \label{item:MaxCliquesToMaxCliques} $\cT$ is into maximal cliques if and only if $\cH$ is into maximal cliques.
    \end{enumerate}
\end{lemma}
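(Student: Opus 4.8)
The key structural fact underlying both parts is that the folding construction (Construction~\ref{const:TDFoldingToGD}) relates the bags of $\cT$ and $\cH$ via the projection $p_r$: each bag $V_h$ of $\cH$ equals $p_r(\hat V_t)$ for any $t$ in the $\Gamma(p_r)$-orbit $h$. Since $p_r$ is $r/2$-ball-preserving with $r \geq 3$, it is in particular $1$-ball-preserving, hence a covering; so $p_r$ restricted to the closed neighbourhood of any vertex $\hat v$ of $G_r$ is an isomorphism onto the closed neighbourhood of $p_r(\hat v)$. The plan is to first record the elementary observation that $p_r$ maps cliques of $G_r$ bijectively to cliques of $G$ and, conversely, that every clique of $G$ lifts (not necessarily uniquely, but locally bijectively) to a clique of $G_r$; this is where I would invoke the relevant statement from~\cite{localGlobalChordal} (cf.\ \cite[Lemma~4.8]{localGlobalChordal}) or prove it directly from $r \geq 3$ ball-preservation together with Lemma~\ref{lem:DeckTrafoActFreelyOnCliques}, which guarantees that a clique in $G_r$ meets none of its nontrivial $\Gamma(p_r)$-translates, so $p_r$ is injective on it.

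\emph{Part~\ref{item:CliquesToCliques}.} For the forward direction, suppose $\cT$ is into cliques. Fix a node $h$ of $H$ and a lift $t$ with $V_h = p_r(\hat V_t)$; since $\hat V_t$ is a clique in $G_r$ and $p_r$ is a homomorphism, $V_h = p_r(\hat V_t)$ spans a complete subgraph of $G$, i.e.\ $V_h$ is a clique. For the converse, suppose $\cH$ is into cliques. Fix a node $t$ of $T$ with orbit $h$; then $p_r(\hat V_t) = V_h$ is a clique in $G$. Using that $p_r$ restricted to a closed neighbourhood in $G_r$ is an isomorphism onto its image, any two vertices $\hat u, \hat v \in \hat V_t$ have $p_r(\hat u), p_r(\hat v)$ adjacent or equal in $G$; lifting this single edge through the local isomorphism at $\hat u$ shows $\hat u \hat v \in E(G_r)$ provided $p_r(\hat u) \neq p_r(\hat v)$, and $p_r$ is injective on $\hat V_t$ by Lemma~\ref{lem:DeckTrafoActFreelyOnCliques} (the clique $\hat V_t$ cannot contain two vertices of a common fibre, as these have distance $>2$). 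Hence $\hat V_t$ is a clique.

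\emph{Part~\ref{item:MaxCliquesToMaxCliques}.} This refines Part~\ref{item:CliquesToCliques}. Recall that ``$\cT$ into maximal cliques'' means $t \mapsto \hat V_t$ is a bijection from $V(T)$ onto the maximal cliques of $G_r$, and similarly for $\cH$. Assume $\cT$ is into maximal cliques. By Part~\ref{item:CliquesToCliques}, $\cH$ is into cliques; I must upgrade each bag $V_h = p_r(\hat V_t)$ to a \emph{maximal} clique of $G$ and check bijectivity. Maximality: if $V_h \cup \{w\}$ were a larger clique of $G$, pick the vertex $\hat w$ in the fibre of $w$ adjacent (via the local isomorphism) to the appropriate vertices of $\hat V_t$; one checks $\hat V_t \cup \{\hat w\}$ is then a clique of $G_r$ properly containing $\hat V_t$, a contradiction. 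Bijectivity onto the maximal cliques of $G$: surjectivity follows because any maximal clique $K$ of $G$ lifts to a maximal clique $\hat K$ of $G_r$ (lift one vertex arbitrarily, then extend through the local isomorphism; maximality of $\hat K$ forces $p_r(\hat K)$ to be a maximal clique containing $K$, hence equal to $K$), and $\hat K = \hat V_t$ for some $t$ by hypothesis, so $K = V_h$ for $h$ the orbit of $t$. Injectivity of $h \mapsto V_h$ on the maximal cliques: if $V_{h} = V_{h'}$ as maximal cliques of $G$, lift to $\hat V_t, \hat V_{t'}$; these are maximal cliques of $G_r$ in the same fibre-pattern, so $\hat V_{t'} = \gamma(\hat V_t)$ for some $\gamma \in \Gamma(p_r)$ — here I again use that $p_r$ is injective on each, so the two lifts of a common clique differ by a deck transformation — whence $t' = \gamma(t)$ by Lemma~\ref{lem:uniqueaction-on-regular-TD} (the unique action), giving $h = h'$. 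The converse direction (assume $\cH$ into maximal cliques, deduce $\cT$ into maximal cliques) is symmetric: $\cT$ is into cliques by Part~\ref{item:CliquesToCliques}; maximality of each $\hat V_t$ follows by projecting a hypothetical extension down through $p_r$ and using maximality of $V_h$; and the bijection $t \mapsto \hat V_t$ onto maximal cliques of $G_r$ follows by lifting the bijection for $\cH$ and tracking $\Gamma(p_r)$-orbits.

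The routine parts are the homomorphism/local-isomorphism bookkeeping; the main obstacle is cleanly establishing the clique-lifting correspondence — that every clique of $G$ lifts to a clique of $G_r$ on which $p_r$ is injective, and that two such lifts of a common clique differ by a deck transformation — since this is exactly where $r \geq 3$ (equivalently, the covering being simple/$1$-ball-preserving with girth considerations) and Lemma~\ref{lem:DeckTrafoActFreelyOnCliques} must be combined. Once that correspondence is in hand, both equivalences are formal diagram-chasing through Construction~\ref{const:TDFoldingToGD}.
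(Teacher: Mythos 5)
The forward direction of (i) and the broad outline of (ii) match the paper's approach, but the converse of (i)—that $\cH$ into cliques forces $\cT$ into cliques—has a genuine gap. As written, your argument would show that \emph{any} subset $\hat V_t \subseteq V(G_r)$ with $p_r(\hat V_t)$ a clique is itself a clique, which is false: $\hat V_t$ could be a disjoint union of several cliques, each a separate lift of the same clique $p_r(\hat V_t)$. Two specific problems: (a) you assert that $p_r$ is injective on $\hat V_t$ ``by \cref{lem:DeckTrafoActFreelyOnCliques}'', but that lemma applies only to sets already known to be cliques, so the reasoning is circular—and indeed nothing a priori prevents a bag of a tree-decomposition from containing two vertices of a common fibre, since vertices at distance $>2$ may well share a bag; (b) even granting injectivity, lifting the edge $p_r(\hat u)p_r(\hat v)$ through the local isomorphism at $\hat u$ produces \emph{some} edge $\hat u \hat v'$ with $p_r(\hat v') = p_r(\hat v)$, but nothing forces $\hat v' = \hat v$; if $\hat u$ and $\hat v$ are far apart in $G_r$, no edge joins them.

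The paper closes this gap in two steps that you are missing. First, using \cref{lem:CliqueAndLocalCover}~\cref{item:CliquesLift} and~\cref{item:CliquesMove}, it identifies the lifts of a clique $X$ of $G$ as exactly the connected components of $G_r[p_r^{-1}(X)]$; since $\hat V_t \subseteq p_r^{-1}(V_h)$, it follows only that each bag $\hat V_t$ is a \emph{disjoint union} of complete graphs. Second, it invokes \cref{lem:CliquesInTreeDecs}, a separate combinatorial lemma about tree-decompositions of connected graphs: if every bag is a disjoint union of complete graphs, then every bag is a single complete graph. This second step is essential and cannot be absorbed into the local-isomorphism bookkeeping—it is the tree-decomposition axioms on the connected graph $G_r$, not the local structure of the covering, that rule out a bag consisting of several disjoint lifts of one clique. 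Your proposal has no analogue of this step, and the local arguments you give cannot supply one. (Part (ii) also leans on the converse of (i) in one direction, so it inherits the same issue; the other direction of (ii) and the maximality/bijectivity bookkeeping are in the right spirit, matching the paper's use of the clique-lifting correspondence.)
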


As tools for the proof of~\cref{lem:FoldingIntoCliques}, we have some easy observations about cliques in covers and a lemma about tree-decompositions into disjoint unions of cliques.
\begin{lemma}[{\cite[Lemma~5.15]{computelocalSeps}}]\label{lem:CliqueAndLocalCover}
    Let $G$ be a (possibly infinite) graph and let $r \geq 3$ an integer.
    \begin{enumerate}
        \item \label{item:CliquesProject} For every clique $\hat X$ of $G_r$, its projection $p_r(\hat X)$ is a clique of $G$, and $p_r$ restricts to a bijection from $\hat X$ to $p_r(\hat X)$.
        \item \label{item:CliquesLift} For every clique $X \subseteq V(G)$ and every lift~$\hat x$ of some vertex~$x \in X$ to~$G_r$, there exists a unique clique $\hat X$ of $G_r$ containing~$\hat x$ such that $p_r$ restricts to a bijection from $\hat X$ to $X$.\footnote{This statement is slightly stronger than what is stated in~\cite[Lemma~5.15]{computelocalSeps}, but follows immediately from its proof.}
        \item \label{item:CliquesMove} $N_{G_r}[\hat X] \cap N_{G_r}[\gamma(\hat X)] = \emptyset$ for every clique $\hat X$ of $G_r$ and every $\gamma \in \Gamma(p_r) \setminus \{\id_{G_r}\}$.
    \end{enumerate}
\end{lemma}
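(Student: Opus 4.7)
The plan is to leverage that $p_r\colon G_r \to G$ is $r/2$-ball-preserving with $r \geq 3$, meaning that $p_r$ restricts to a graph-isomorphism $B_{G_r}(\hat v, r/2) \to B_G(p_r(\hat v), r/2)$ for every vertex $\hat v$ of $G_r$; since $r/2 \geq 3/2$, such a ball always contains the closed $1$-neighborhood $N_{G_r}[\hat v]$. All three parts are then consequences of this local isomorphism combined with the fact that $\Gamma(p_r)$ acts freely on $G_r$ (the standard freeness of deck transformations).

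For \cref{item:CliquesProject}, pick any $\hat x \in \hat X$ and observe $\hat X \subseteq N_{G_r}[\hat x] \subseteq B_{G_r}(\hat x, r/2)$; the ball-isomorphism restricts to an edge-preserving bijection from $\hat X$ onto the clique $p_r(\hat X)$ of $G$. For \cref{item:CliquesLift}, apply the inverse isomorphism $B_G(x, r/2) \to B_{G_r}(\hat x, r/2)$ to $X \subseteq N_G[x]$ to obtain a clique-lift $\hat X$ through $\hat x$. Uniqueness is immediate: any clique-lift $\hat X'$ of $X$ through $\hat x$ is contained in $N_{G_r}[\hat x]$, on which $p_r$ is injective, so $\hat X'$ is determined by its projection $X$. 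In particular, this yields the explicit description $\hat X = N_{G_r}[\hat x] \cap p_r^{-1}(X)$.

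For \cref{item:CliquesMove}, suppose for contradiction that $\hat v \in N_{G_r}[\hat X] \cap N_{G_r}[\gamma(\hat X)]$ for some $\gamma \in \Gamma(p_r) \setminus \{\id_{G_r}\}$. Pick witnesses $\hat x \in \hat X$ and $\hat y \in \gamma(\hat X)$ each at distance $\leq 1$ from $\hat v$; both lie in $B_{G_r}(\hat v, 1) \subseteq B_{G_r}(\hat v, r/2)$. Since $p_r \circ \gamma = p_r$, \cref{item:CliquesProject} gives $p_r(\hat X) = p_r(\gamma(\hat X)) =: X$, so $p_r(\hat x), p_r(\hat y) \in X$. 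As both images lie in $X \cap B_G(v, 1)$ for $v := p_r(\hat v)$, they are adjacent or equal in $G$; pulling this back through the ball-isomorphism around $\hat v$ shows that $\hat y$ is adjacent to or equal to $\hat x$ in $G_r$, i.e.\ $\hat y \in N_{G_r}[\hat x]$. Combined with $p_r(\hat y) \in X$, the explicit description $\hat X = N_{G_r}[\hat x] \cap p_r^{-1}(X)$ from \cref{item:CliquesLift} forces $\hat y \in \hat X$. Then $\gamma^{-1}(\hat y) \in \hat X$ with $p_r(\gamma^{-1}(\hat y)) = p_r(\hat y)$; the injectivity of $p_r$ on $\hat X$ from \cref{item:CliquesProject} gives $\gamma^{-1}(\hat y) = \hat y$, so $\gamma$ fixes $\hat y$, contradicting the free action of $\Gamma(p_r)$.

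The main obstacle is the step in \cref{item:CliquesMove} that concludes $\hat y \in \hat X$: one must weave together two $r/2$-ball-isomorphisms centered at different vertices — one centered at $\hat v$ to transfer the edge $p_r(\hat x)p_r(\hat y)$ up to an edge $\hat x \hat y$ in $G_r$, and one centered at $\hat x$ to identify $\hat X$ as $N_{G_r}[\hat x] \cap p_r^{-1}(X)$. Both invocations use $r \geq 3$ only to guarantee that the relevant closed $1$-neighborhoods sit inside their respective $r/2$-balls, after which the rest is bookkeeping with the local isomorphism.
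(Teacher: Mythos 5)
Your proof is correct. Note that the paper does not prove this lemma at all---it imports it verbatim from \cite[Lemma~5.15]{computelocalSeps}---so there is no in-paper argument to compare against; what you have written is a valid self-contained derivation from the two facts the paper makes available, namely that $p_r$ is $r/2$-ball-preserving and that $\Gamma(p_r)$ acts freely on $G_r$. The chain of reasoning is sound: (i) and the existence half of (ii) are direct restrictions of the ball-isomorphism, the uniqueness half of (ii) correctly yields the identity $\hat X = N_{G_r}[\hat x]\cap p_r^{-1}(X)$ (each $y\in X$ has exactly one preimage in $N_{G_r}[\hat x]$, by injectivity of $p_r$ there plus surjectivity of $p_r\restriction\hat X$), and (iii) combines this with freeness exactly as needed. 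The only point worth flagging is the step in (iii) where you transfer adjacency of $p_r(\hat x)$ and $p_r(\hat y)$ back to $G_r$: this requires that the ball $B_G(v,r/2)$ contains not just the vertices of $N_G[v]$ but also the edges between two neighbours of $v$, which is precisely what the half-integral convention for balls of radius $\ge 3/2$ provides and is the reason the hypothesis is $r\ge 3$ rather than $r\ge 2$; it would be worth making that convention explicit. With that caveat your argument matches, in spirit, the proof in the cited source, and it also justifies the footnoted strengthening of (ii).
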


\begin{lemma} \label{lem:CliquesInTreeDecs}
    If~$(T, \cV)$ is a tree-decomposition of a connected graph~$G$ such that each~$G[V_t]$ is a disjoint union of complete graphs, then~$(T, \cV)$ is into cliques, i.e.\,each~$G[V_t]$ is in fact a complete graph.
\end{lemma}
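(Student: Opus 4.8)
The plan is to argue by contradiction: suppose some bag~$V_t$ is not a clique of~$G$, i.e.\ the disjoint union~$G[V_t]$ of complete graphs has at least two distinct complete components~$K_1$ and~$K_2$. Pick vertices~$u \in K_1$ and~$v \in K_2$; then~$u$ and~$v$ are non-adjacent and lie in no common complete component of~$G[V_t]$. Since~$G$ itself is connected, there is a~$u$--$v$ path~$P$ in~$G$, and by minimality we may take~$P$ to be an induced path. I would then track how~$P$ enters and leaves the bag~$V_t$.

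The key step is to analyze, for each edge~$f$ of~$T$ incident to~$t$, the separation~$s^f = \{A^f_1, A^f_2\}$ induced by~\ref{T2'}: its separator is the adhesion set~$V_f \subseteq V_t$, and since~$V_f$ lies inside the disjoint union of complete graphs~$G[V_t]$ and must itself be connected in~$G$ (as~$V_f$ induces a clique? no — more carefully: $V_f$ need not be connected), I instead use that any adhesion set~$V_f$, being a subset of~$V_t$, meets each complete component~$K_i$ of~$G[V_t]$ in a possibly-empty subclique. The crucial observation is that if a~$u$--$v$ path leaves the subtree on~$t$'s side through an edge~$f$ and re-enters, it does so through vertices of~$V_f$; but~$V_f$ cannot contain both a vertex of~$K_1$ and a vertex of~$K_2$ while those two vertices are joined by a path avoiding~$V_t$, because then that path together with an edge inside some~$K_i$ and the structure of the tree-decomposition would produce, after routing through the parts, an induced cycle of length~$\geq 4$ — contradicting that~$G[V_t]$ being a disjoint union of cliques forces... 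Actually the cleanest route: observe that~$u$ and~$v$ lie in the same part~$G[V_t]$ but in different components of it, yet they are connected in~$G$; so there is a minimal-length~$u$--$v$ path~$P$ in~$G$, and its first and last edges leave~$V_t$ only through adhesion sets. Apply~\cref{lem:CliqueContainedInBag}-style reasoning: the finite clique~$K_1$ sits in a single bag~$V_t$; follow~$P$ to the first vertex~$w$ not in~$V_t$; the edge before~$w$ crosses some~$s^f$, so~$w$ lies on the far side~$A^f_i \setminus V_f$. Since~$P$ must return to~$v \in V_t$, it recrosses~$V_f$. Both of~$P$'s crossing points of~$V_f$ lie in~$V_f \subseteq V_t$; pulling~$P$ back along the tree, one sees~$u$ and~$v$ are separated in~$G$ by~$V_f$ unless~$V_f$ meets both~$K_1$ and~$K_2$ — but a single adhesion set~$V_f$ is a subset of the disjoint union~$G[V_t]$, and (here is the point) any two of its vertices in different~$K_i$ are non-adjacent, so the subgraph of~$P$ together with a witness inside~$V_t$ yields an induced~$C_4$ or longer, which in turn forces a non-clique among the adhesion...

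Let me state it more simply for the write-up: \emph{Every adhesion set~$V_f$ of~$(T,\cV)$ is a subset of some bag~$V_t$ and hence a subset of a disjoint union of complete graphs; but an adhesion set is itself a separator of~$G$, and since~$G$ is connected, if~$V_f$ met two different components~$K_1, K_2$ of~$G[V_t]$ then the two sides of~$s^f$ would each have to contain a~$K_1$--$K_2$ connection through the other part, producing a long induced cycle.} The main obstacle I anticipate is making the ``long induced cycle'' extraction fully rigorous: one must carefully choose a shortest~$u$--$v$ path, identify exactly where it crosses the adhesion sets, and splice in an edge inside a single~$K_i$ to close a cycle of length at least four, then check that cycle is chordless — this is the kind of case analysis (does the path re-enter the same~$K_i$ or a different one, how many times does it cross~$V_f$) that looks routine but needs care. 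An alternative, and probably shorter, route avoiding induced cycles: argue that the bag~$V_t$, being a finite (by local finiteness? — no, $G$ need not be locally finite here) — so instead: if~$G[V_t]$ has components~$K_1, K_2$ and~$u \in K_1, v \in K_2$ are joined in~$G$, take a shortest such path~$P$; it has an internal vertex outside~$V_t$; its neighbours on~$P$ inside the ``re-entry'' adhesion sets force, via~\ref{T2}, a contradiction with~$u,v$ lying in a common~$T_w$ — no. I will go with the induced-cycle argument: \textbf{conclude} that~$G$ contains an induced cycle of length~$\geq 4$ all of whose vertices except two consecutive ones lie outside~$V_t$, which contradicts the fact that such a cycle would have to be ``captured'' by a single bag by the Helly property only if it were a clique — wait, cycles of length~$\geq 4$ need not be captured. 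So the real contradiction is simply: the graph~$G$ is then \emph{not} chordal, but we never assumed chordality. Hence the correct finish must be purely combinatorial via the tree-decomposition axioms, showing~$u$ and~$v$ cannot actually be connected in~$G$ outside~$V_t$ while~$G[V_t]$ splits them — contradicting connectedness of~$G$. The hard part, then, is precisely the topological routing lemma: \emph{if~$u,v \in V_t$ lie in distinct components of~$G[V_t]$, then~$V_t \setminus \{u,v\}$, or rather the union of adhesion sets around~$t$ together with~$V_t$, separates~$u$ from~$v$ in~$G$}, which follows because any~$u$--$v$ walk in~$G$, by~\ref{T2}, stays in parts indexed by a connected subtree and must therefore pass through~$V_t$ again, landing in the same component of~$G[V_t]$ as where it left — a parity/connectivity bookkeeping that I expect to be the genuine content of the proof.
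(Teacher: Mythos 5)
Your setup is right — assume some $G[V_t]$ has two components $K_1,K_2$, pick $u\in K_1$, $v\in K_2$, and take a shortest $u$--$v$ path $P$ in $G$ (this is exactly how the paper begins, and the shortest-path choice is used crucially there) — but the proposal never actually completes a proof; it is a sequence of false starts. You correctly abandon the induced-cycle idea (no chordality is assumed), but the ``topological routing lemma'' you fall back on at the end is stated without proof, and as stated it has a genuine gap: when a walk exits $V_t$ at some $p$ and re-enters at some $q$, both $p$ and $q$ lie in the same adhesion set $V_f = V_t\cap V_s$ (for $f=ts$ the edge of $T$ towards the component of $T-t$ the walk visited), but there is no a priori reason that $p$ and $q$ lie in the same component of $G[V_t]$: the adhesion set $V_f$ is merely a subset of the disjoint union of cliques $G[V_t]$ and may meet several of them. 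In fact, once the lemma is proved, every $G[V_s]$ is a clique and hence so is every $V_f$, which then does force $p$ and $q$ into the same component — so your routing claim is essentially equivalent to the lemma itself and cannot be used to prove it without further work.

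The paper's proof avoids needing this. It takes the node-minimal subtree $T'\subseteq T$ containing $t$ with $V(P)\subseteq\bigcup_{s\in T'}V_s$, notes that $T'\neq\{t\}$ (otherwise $P$ would lie entirely in $G[V_t]$ and connect $K_1$ to $K_2$ there), and shows any leaf $t'\neq t$ of $T'$ is redundant, contradicting minimality. The key facts used are: (a) since $P$ is a shortest path, it meets each clique of $G$ in at most an edge, so $P\cap G[V_{t'}]$ is a disjoint union of subpaths $P'$ of length at most $1$; and (b) each endvertex of such a $P'$ is either an endvertex of $P$ (hence in $V_t$ and, by~\ref{T2}, in the adhesion set $V_{t'}\cap V_{t''}$ where $t''$ is $t'$'s unique neighbour in $T'$) or has a $P$-neighbour outside $V_{t'}$, which again forces it into $V_{t'}\cap V_{t''}$. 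Since $P'$ has length at most $1$ and both its ends lie in $V_{t''}$, the whole of $P'$ lies in $G[V_{t''}]$, so $t'$ can be dropped from $T'$. If you want to repair your argument, this leaf-pruning step is the missing ``bookkeeping'' you anticipated; note it does not track components of $G[V_t]$ at all, which is precisely what sidesteps the gap in the routing claim.
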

\begin{proof}
    Suppose for a contradiction that there is~$t \in V(T)$ such that~$G[V_t]$ is the disjoint union of at least two complete graphs; in particular, $G[V_t]$ has at least two connected components~$C$ and~$C'$.
    Since~$G$ is connected, there is a path connecting such~$V(C)$ and~$V(C')$ in~$G$, and we choose~$P$ to be a shortest such path.
    Then $P$ meets every clique in~$G$ in at most a vertex or an edge.

    Let~$T'$ be a node-minimal subtree of~$T$ containing~$t$ such that~$V(P) \subseteq \bigcup_{s \in T'} V_s$. 
    Since $P$ meets two distinct components of $G[V_t]$, it follows that $T' \setminus \{t\}$ is non-empty.
    Let~$t' \neq t$ be any leaf of~$T'$.
    Then~$P \cap G[V_{t'}]$ is a disjoint union of subpaths~$P'$ of~$P$, each of length at most~$1$.
    Now an end vertex of~$P'$ is either also an end vertex of~$P$ or it has a neighbour outside of~$V_{t'}$.
    In either case, the end vertex of~$P'$ must be contained in~$V_{t'} \cap V_{t''}$, where $t''$ is the unique neighbour of the leaf~$t'$ of~$T$.
    Thus, the path~$P'$, having length at most~$1$, is also contained in~$G[V_{t''}]$.
    This implies $V(P) \subseteq \bigcup_{s \in T' \setminus \{t'\}} V_s$, contradicting the choice of~$T'$.
\end{proof}

\begin{proof}[Proof of~\cref{lem:FoldingIntoCliques}]
    \cref{item:CliquesToCliques}:
    If the \td\ $\cT$ is into cliques, then \cref{lem:CliqueAndLocalCover}~\cref{item:CliquesProject} yields that the \gd\ $\cH$ is into cliques.
    Conversely, suppose that the \gd\ $\cH$ is into cliques.
    We claim that the \td\ $\cT =: (T, \cV)$ of~$G_r$ has the property that each of its bags is a disjoint union of cliques; \cref{lem:CliquesInTreeDecs} then completes the proof.
    Since~$\cH$ is into cliques, $p_r$ maps each bag~$V_t$ to a clique~$X$ of~$G$. 
    By~\cref{lem:CliqueAndLocalCover}~\cref{item:CliquesLift} and~\cref{item:CliquesMove}, the cliques~$\hat X$ of~$G_r$ for which $p_r$ restricts to a bijection to~$X$ are precisely the vertex sets of the connected components of~$G_r[p_r^{-1}(X)]$. 
    As~$V_t \subseteq p_r^{-1}(X)$, every connected component of~$G_r[V_t]$ is hence complete, as desired.

    \cref{item:MaxCliquesToMaxCliques}:
    Given a maximal clique~$\hat X$ of~$G_r$, \cref{lem:CliqueAndLocalCover}~\cref{item:CliquesProject} ensures that $X \coloneqq p_r(\hat X)$ is a clique of $G$.
    Let $Y$ be a maximal clique of $G$ containing $X$.
    By \cref{lem:CliqueAndLocalCover}~\cref{item:CliquesLift}, there is a lift $\hat Y$ of $Y$ to $G_r$.
    By potentially shifting $\hat Y$ with a deck transformation of $p_r$, we may assume that $\hat Y$ meets $\hat X$.
    Since distinct lifts of the clique $X$ are disjoint by \cref{lem:CliqueAndLocalCover}~\cref{item:CliquesMove}, $\hat Y$ must contain the whole lift $\hat X$ of $X$.
    In particular, since $\hat X$ is a maximal clique of $\hat G$, $\hat Y = \hat X$, and thus $X = Y$ is a maximal clique of $G$.
    Similarly, one proves that every clique of $G_r$ that projects to a maximal clique of $G$ is also maximal.
    So since distinct cliques of~$G_r$ that project to the same clique of~$G$ are disjoint by \cref{lem:CliqueAndLocalCover}~\cref{item:CliquesMove}, $\cT$ is into maximal cliques if and only if $\cH$ is into maximal cliques.
\end{proof}

We remark that both \cref{lem:FoldingIntoCliques,lem:CliqueAndLocalCover} and their proofs hold more generally for all coverings~$p: \hat G \to G$ which are $3/2$-ball preserving.

\subsection{Canonical graph-decompositions into cliques} \label{sec:CanGraphDecIntoCliques}

Recall the theorem giving the canonical graph-decomposition~$\cH_r(G)$:

\CanGraphDec*

To understand~\cref{thm:CanGraphDec}, let us detail the construction of the graph-decomposition~$\cH_r(G)$ given by~\cref{thm:CanGraphDec}.
We will not give the original construction from~\cite{canonicalGD}, but the most recent one from~\cite{computelocalSeps}, which builds on the notion of bottlenecks instead of tangles\footnote{As described in~\cref{sec:ConstructionN(G)}, sets of separations efficiently distinguishing two tangles form a bottleneck. Thus, the approach building on bottlenecks captures more than a tree of tangles. We refer the reader to~\cite{computelocalSeps}, in particular \S1.4, for a detailed comparison of the two constructions.}.

Let~$p_r : G_r \to G$ be the $r$-local covering of a locally finite, connected, $r$-locally chordal graph~$G$. 
By~\cref{BasicCharacterization}, $G_r$ is a locally finite, connected and chordal graph.
The `more precisely'-part of \cref{thm:canonicalchordalTD} then implies that $\cT(G_r) = \cT(N(G_r))$ is a canonical, regular, point-finite \td\ of~$G_r$.
We can thus invoke~\cref{const:TDFoldingToGD} to obtain the canonical \gd\ $\cH_r(G)$ of~$G$.
For a thorough discussion on how~$\cH_r(G)$ displays the~$r$-global structure of~$G$ and why it is the unique canonical such \gd\ of~$G$, we refer the reader to~\cite{canonicalGD}.

\begin{proof}[Proof of \cref{thm:Hr-int-cliques}]
    First, assume that $G_r$ is chordal.
    By \cref{thm:canonicalchordalTD}, $\cT(G_r)$ is into cliques, and by \cref{lem:FoldingIntoCliques}~\cref{item:CliquesToCliques}, $\cH_r(G)$ is thus also into cliques.

    Conversely, assume that $\cH_r(G)$ is into cliques.
    Then \cref{lem:FoldingIntoCliques}~\cref{item:CliquesToCliques} yields that the \td\ $\cT(G_r)$ of~$G$ from which $\cH_r(G)$ is obtained by folding via~$p_r$ is into cliques.
    Hence, this \td\ witnesses by \cref{prop:subtreeintersec-is-chordal} that the $r$-local cover $G_r$ of $G$ is chordal.
\end{proof}

We remark that the proof of \cref{thm:Hr-int-cliques} carries over to locally finite graphs, except for the existence of~$\cH_r(G)$, which is not guaranteed by \cref{thm:CanGraphDec} for locally finite graphs~$G$.
For the forward implication of \cref{thm:Hr-int-cliques}, if $G$ is a locally finite, connected, $r$-locally chordal graph, then $\cH_r(G)$ exists, since $\cT(G_r)$ then is a canonical \td\ of the locally finite, connected, chordal graph $G_r$ by~\cref{thm:canonicalchordalTD}.
The backward implication of \cref{thm:Hr-int-cliques} only remains valid if $\cH_r(G)$ exists for the locally finite graph $G$.
In particular, \cref{thm:Hr-int-cliques} holds for quasi-transitive, locally finite graphs $G$, where $\cH_r(G)$ exists by \cite[Theorem~5.5]{canonicalGD}.

\subsection{Graph-Decompositions into maximal cliques} \label{sec:GDsIntoMaximalCliques}

In this section, we discuss~\cref{thm:chordalTDmaximal} in the context of~$r$-locally chordal graphs.
Its main application lies in the proof of a further characterization of~$r$-locally chordal graphs; for this, a \gd $(H, \cV)$ of a graph~$G$ is \defn{$r$-acyclic} for an integer~$r \ge 0$ if for every set~$X$ of at most~$r$ vertices of~$G$, the union of the respective co-parts~$H_x$ with~$x \in X$ is acyclic.

\begin{theorem}{\cite[Theorem 4]{localGlobalChordal}} \label{mainresult:racycliccliquegraphs-iff-r-locally-chrodal-etc}
    Let $G$ be a locally finite graph and $r \geq 3$ an integer.
    Then the following are equivalent:
    \begin{enumerate}
        \item \label{locallyChordal:i} $G$ is $r$-locally chordal.
        \item \label{locallyChordal:ii} $G$ admits an $r$-acyclic \gd\ into maximal cliques.
        \item \label{locallyChordal:iii} $G$ admits a canonical $r$-acyclic \gd\ into cliques.
        \item \label{locallyChordal:iv} $G$ admits an $r$-acyclic \gd\ into cliques.
    \end{enumerate}
\end{theorem}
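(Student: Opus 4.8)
The plan is to prove the cycle of implications $(i) \Rightarrow (iii) \Rightarrow (iv) \Rightarrow (i)$ together with the side implication $(i) \Rightarrow (ii)$; this suffices, since $(iii) \Rightarrow (iv)$ and $(ii) \Rightarrow (iv)$ are immediate --- one forgets canonicity and uses that a \gd\ into maximal cliques is in particular one into cliques, while $r$-acyclicity is untouched in both cases --- so that all four statements become equivalent. After the routine reduction to connected~$G$ (decomposing $G$ component-wise, and using that the $r$-local cover of a graph is the disjoint union of those of its components), the real content lies in the two ``folding'' implications $(i) \Rightarrow (iii)$ and $(i) \Rightarrow (ii)$ and the ``unfolding'' implication $(iv) \Rightarrow (i)$; all three pass through the $r$-local cover~$G_r$ and~\cref{BasicCharacterization}, which identifies $r$-local chordality of~$G$ with chordality of~$G_r$.

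For $(i) \Rightarrow (iii)$ I would first invoke~\cref{BasicCharacterization} to see that~$G_r$ is connected, locally finite and chordal, so that~\cref{lem:cTN-is-TD} makes~$\cT(M(G_r))$ a canonical \td\ of~$G_r$ into cliques; folding it via~$p_r$ as in~\cref{const:TDFoldingToGD} yields~$\cH_r(G)$, which is into cliques by~\cref{lem:FoldingIntoCliques}~\cref{item:CliquesToCliques}, exactly as in the proof of~\cref{thm:Hr-int-cliques}. What remains is $r$-acyclicity of~$\cH_r(G)$. Here I would fix a set~$X$ of at most~$r$ vertices of~$G$, lift each~$x \in X$ to~$G_r$, and use that~$p_r$ is $r/2$-ball-preserving to argue that the co-part subtrees~$T_{\hat x}$ of the decomposition tree~$T$ of~$\cT(M(G_r))$ together span a subforest of~$T$ on which the quotient map~$T \to H = T / \Gamma(p_r)$ identifies nothing, so that its image~$\bigcup_{x \in X} H_x$ is a forest. (Alternatively, $r$-acyclicity is among the properties built into the statement that~$\cH_r(G)$ displays the $r$-global structure of~$G$; cf.~\cite{canonicalGD}.)

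The implication $(i) \Rightarrow (ii)$ follows the same template but starts from the stronger~\cref{thm:chordalTDmaximal}: the $r$-local covering~$p_r \colon G_r \to G$ is normal and~$G_r$ is chordal by~\cref{BasicCharacterization}, so~\cref{thm:chordalTDmaximal} provides a $\Gamma(p_r)$-canonical \td\ of~$G_r$ into its maximal cliques; folding this \td\ via~$p_r$ and applying~\cref{lem:FoldingIntoCliques}~\cref{item:MaxCliquesToMaxCliques} produces a \gd\ of~$G$ into maximal cliques, which is $r$-acyclic by the same ball-preservation argument as above.

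Finally, for $(iv) \Rightarrow (i)$ I would unfold. Given an $r$-acyclic \gd\ $(H, \cV)$ of~$G$ into cliques, I would form its pull-back along~$p_r$: dually to~\cref{const:TDFoldingToGD}, one reassembles copies of the bags and co-parts of~$(H, \cV)$ indexed by the fibres of~$p_r$ into a decomposition~$\hat\cT$ of~$G_r$. The crucial --- and, I expect, hardest --- point is that the model graph of~$\hat\cT$ is in fact a \emph{tree}, rather than a general graph: intuitively, a cycle in it would descend to a closed walk in~$H$ witnessed inside the union of at most~$r$ co-parts, which is precisely what $r$-acyclicity of~$(H,\cV)$ forbids, so that here the hypothesis combines with the $r/2$-ball-preservation of~$p_r$ to do exactly the work needed. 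Granting that~$\hat\cT$ is a \td\ of~$G_r$, each of its bags arises as a lift of a clique of~$(H, \cV)$ and is hence itself a clique of~$G_r$ by~\cref{lem:CliqueAndLocalCover}~\cref{item:CliquesLift}, so~$\hat\cT$ is into cliques; then~$G_r$ is chordal by~\cref{prop:subtreeintersec-is-chordal}, and~\cref{BasicCharacterization} yields that~$G$ is $r$-locally chordal. Making this unfolding construction precise --- building the model tree cleanly and verifying that the decomposition axioms survive the reassembly --- is the step I expect to demand the most care.
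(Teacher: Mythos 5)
The paper does not prove this theorem itself; it cites it from \cite{localGlobalChordal} and only explains where the present paper's results feature: \cref{thm:canonicalchordalTD} gives the input to \cref{const:TDFoldingToGD} for `\cref{locallyChordal:i} implies \cref{locallyChordal:iii}', and \cref{thm:chordalTDmaximal} does so for `\cref{locallyChordal:i} implies \cref{locallyChordal:ii}', the resulting \gd s being into (maximal) cliques by \cref{lem:FoldingIntoCliques}. Your treatment of these two implications matches that description exactly, and your observation that `\cref{locallyChordal:ii} implies \cref{locallyChordal:iv}' and `\cref{locallyChordal:iii} implies \cref{locallyChordal:iv}' are trivial is correct, so the overall logical skeleton is sound.

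The genuine gaps lie precisely where the paper also defers to \cite{localGlobalChordal}. For the $r$-acyclicity of the folded \gd s your lifting idea is the right intuition but is not carried out: a set~$X$ of at most~$r$ vertices of~$G$ need not lie in any $r/2$-ball, so one cannot simply lift it coherently; you would need to argue per connected component of $\bigcup_{x \in X} H_x$, choose the lifts $\hat x$ so that the subtrees $T_{\hat x}$ pairwise intersect whenever the $H_x$ do, and then invoke something like \cref{lem:CliqueAndLocalCover}~\cref{item:CliquesMove} (or \cref{lem:DeckTrafoActFreelyOnCliques}) to see that the quotient map $T \to T/\Gamma(p_r)$ is injective on the resulting subtree. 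The larger gap is `\cref{locallyChordal:iv} implies \cref{locallyChordal:i}': you correctly flag the unfolding of $(H,\cV)$ along~$p_r$ and the treeness of the pulled-back model graph as the crux, but you neither construct the pull-back, nor verify the decomposition axioms for it, nor establish treeness --- and this is exactly the substantive content that is missing. One alternative, possibly cleaner route here is to show directly that an $r$-acyclic \gd\ into cliques certifies \cref{BasicCharacterization}~\cref{basic:ii} (that $G$ is $r$-chordal and wheel-free) or \cref{basic:iv} (that every minimal $r$-local separator is a clique), thereby avoiding the unfolded decomposition altogether.
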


For the proof of~\cref{mainresult:racycliccliquegraphs-iff-r-locally-chrodal-etc}, we refer the reader to~\cite{localGlobalChordal}.
\cref{thm:canonicalchordalTD} features in the proof of `\cref{locallyChordal:i} implies \cref{locallyChordal:iii}' in that it gives the input to~\cref{const:TDFoldingToGD}, which then yields the desired canonical graph-decompositions into cliques by~\cref{lem:FoldingIntoCliques}~\cref{item:CliquesToCliques} (as in the proof of~\cref{thm:Hr-int-cliques}).
Similarly, \cref{thm:chordalTDmaximal} features in the proof of~`\cref{locallyChordal:i} implies \cref{locallyChordal:ii}' in that it gives the input to~\cref{const:TDFoldingToGD}, which then yields the desired graph-decompositions into maximal cliques by~\cref{lem:FoldingIntoCliques}~\cref{item:MaxCliquesToMaxCliques}.
As shown in~\cite{localGlobalChordal}, both these \gd s turn out to be also $r$-acyclic.

\section{Counterexamples to possible strengthenings of \texorpdfstring{\cref{thm:canonicalchordalTD}}{Theorem 2}} \label{sec:CounterEx}

\cref{thm:canonicalchordalTD} asserts that every locally finite, chordal graph admits a canonical \td\ into cliques.
In this section, we provide counterexamples to two possible strengthenings of~\cref{thm:canonicalchordalTD}.
First, \cref{ex:no-maximal-and-canonical} shows that we cannot obtain a canonical \td s into maximal cliques.
Secondly, \cref{ex:CountableNoCanonical} demonstrates that a (countable) graph need not admit a canonical \td\ into cliques, even if it admits non-canonical one.

\subsection{Canonical \td s into maximal cliques}

Recall that Halin's \td\ in \cref{thm:NoInifniteCliqueChordalIsTDIntoCliques} are not only into cliques but even into maximal cliques.
Formally, a \td\ $(T,\cV)$ of a graph $G$ is \defn{into its maximal cliques} if the map $t \mapsto V_t$ is a bijection from $V(T)$ to the set of maximal cliques of $G$.
It is natural to ask if \cref{thm:canonicalchordalTD} extends to canonical \td s into maximal cliques.
But there are even finite (connected) chordal graphs that do not admit a canonical \td\ into their maximal cliques:

\begin{example}\label{ex:no-maximal-and-canonical}
    The stars $K_{1,t}$ with $t \geq 3$ leaves are chordal graphs which do not admit a canonical \td\ into its maximal cliques.
\end{example}

\begin{proof}
    For every \td\ $(T, \cV)$ of $K_{1, t}$ into its maximal cliques, the bags $\cV$ are exactly the set of edges of $K_{1, t}$. For any pair of edges $e, f$ of $K_{1, t}$, there is an automorphism of $G$ mapping $e$ to $f$, so if $(T, \cV)$ is canonical, then decomposition graph $T$ must be vertex-transitive. But a finite, vertex-transitive tree has at most two vertices, since every finite tree with at least three vertices contains a vertex of degree one and a vertex of degree greater than one. Therefore, if $t \geq 3$, then the graph $K_{1, t}$ does not admit a canonical \td\ into its maximal cliques. 
\end{proof}

As a sanity check, we remark that the stars $K_{1,t}$ with $t \ge 3$ from~\cref{ex:no-maximal-and-canonical}, however, admit a canonical tree-decomposition into cliques:
take a $K_{1,t}$ as the decomposition tree and associate with each of its leaves the incident edge as its bag and for the center vertex, choose the bag as the singleton center vertex, which is a non-maximal clique.

Further, we remark that~\cref{ex:no-maximal-and-canonical} is still a counterexample if we drop the requirement of bijectivity from the definition of `into its maximal cliques', that is, if we allow the \td\ to have the same maximal clique as different bags.
Indeed, the nodes of the decomposition tree~$T$ with the same bag form a subtree of~$T$, and the edge-transitivity of~$K_{1,t}$ then translates into transitivity of all those subtrees of~$T$.
So after contracting each such subtree to a single node, we can conclude with the same argument as above.

In light of Diestel's characterisation of countable graphs that admit a \td\ into cliques via forbidden simplicial minors~\cite{CharacterisationOfCountableChordalGraphsAdmitTD}, it might be a natural goal to characterise the countable, chordal graphs that admit canonical \td s into cliques or characterise the countable, chordal graphs that admit \td s into its maximal cliques.

\subsection{Canonical tree-decomposition into cliques}

Does every graph admitting a \td\ into cliques also admit one that is canonical?
We answer this question in the negative, even for countable graphs.
Our example is inspired by \cite[Figure 1]{CanTreesofTDs}, who attribute their example to~\cite{dunwoody2015vertex}.

\begin{example} \label{ex:CountableNoCanonical}
    The graph $G$ depicted in \cref{fig:CEcanonicalTD} is a countable, chordal graph that admits a \td\ into cliques but not a canonical such \td .
\end{example}

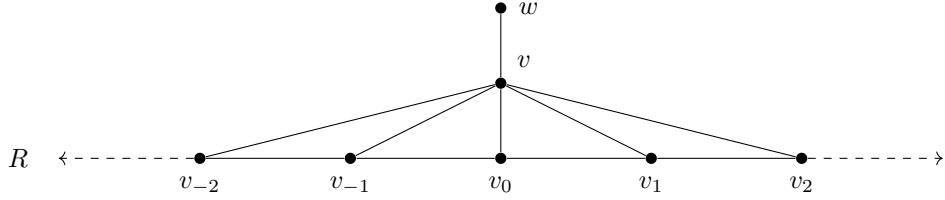
\begin{figure}
    \centering

    \begin{tikzpicture}
        \node[opacity=0] (left) at (0,0) {};
        \node[left=1pt of left] {$R$};
        \node[circle, fill=black, inner sep=1.5pt] (vminus2) at (2,0) {};
        \node[below=1pt of vminus2] {$v_{-2}$};
        \node[circle, fill=black, inner sep=1.5pt] (vminus1) at (4,0) {};
        \node[below=1pt of vminus1] {$v_{-1}$};
        \node[circle, fill=black, inner sep=1.5pt] (vzero) at (6,0) {};
        \node[below=1pt of vzero] {$v_{0}$};
        \node[circle, fill=black, inner sep=1.5pt] (vone) at (8,0) {};
        \node[below=1pt of vone] {$v_{1}$};
        \node[circle, fill=black, inner sep=1.5pt] (vtwo) at (10,0) {};
        \node[below=1pt of vtwo] {$v_{2}$};
        \node[opacity=0] (right) at (12,0) {}; 
        
        \node[circle, fill=black, inner sep=1.5pt] (v) at (6,1) {};
        \node[above right=1pt and 1pt of v] {$v$};
        \node[circle, fill=black, inner sep=1.5pt] (w) at (6,2) {};
        \node[right=1pt of w] {$w$};
        
        \draw[dashed,->] (vminus2) -- (left);
        \draw (vminus2) -- (vminus1);
        \draw (vminus1) -- (vzero);
        \draw (vzero) -- (vone);
        \draw (vone) -- (vtwo);
        \draw[dashed,->] (vtwo) -- (right);
        
        \draw (v) -- (vminus2);
        \draw (v) -- (vminus1);
        \draw (v) -- (vzero);
        \draw (v) -- (vone);
        \draw (v) -- (vtwo);
        
        \draw (w) -- (v);
    \end{tikzpicture}
    \caption{The graph~$G$ for \cref{ex:CountableNoCanonical} consists of a double-ray $R = \dots e_{-1}v_{-1}e_{0}v_0e_1v_1 \dots$ with an apex vertex $v$ and a further edge $e = vw$ attached to $v$.}
    \label{fig:CEcanonicalTD}
\end{figure}

\begin{proof}
    For a \td\ into cliques, we take the double-ray $R$ together with the edge $v_0 v$ as the decomposition tree.
    We let $V_v = \{v, w\}$ and $V_{v_i} = \{v, v_i, v_{i+1}\}$.
    It is easy to check that this is indeed a \td , and it is by the definition of~$G$ one into cliques.

    Suppose now that $(T,\cV)$ is a \td\ of $G$ into cliques which is even canonical.
    The two triangles $v_{-1} v_0 v$ and $v_0 v_1 v$ are both contained in bags of this \td , and these bags have to be distinct because $(T, \cV)$ is into cliques.
    Further, these two bags intersect in $\{v_0, v\}$, hence there is an edge $e_0 \in T$ with precisely $\{v_0, v\}$ as its adhesion set.
    Let $\phi$ be the (unique) automorphism of $G$ which translates the ray $R$ by one, i.e.\ $\phi(e_{i})=e_{i+1}$ and $\phi(v_i) = \phi (v_{i+1})$ for every $i \in \Z$, while $\phi(v) = v$ and $\phi(w) = w$.
    Repeated application of $\phi$ (and $\phi^{-1}$) implies together with the canonicity of $(T, \cV)$ that $e_0$ yields the existence of edges $e_i$ for all $i$ with adhesion set $\{v_i, v\}$.
    The separation induced by $e_i$ contains the edge $\{v, w\}$ on one side, and since~$(T, \cV)$ is canonical, all separations induced by some~$e_i$ must agree on this side as the repeated application of $\phi$ (and $\phi^{-1}$) shows.
    But since $(T, \cV)$ is a \td, this is not possible.
\end{proof}

\subsection{Tree-decompositions into maximal cliques}

Does every graph admitting a \td\ into cliques also admit one  into its maximal cliques?
We answer this question in the negative, even for countable graphs.
We will deduce this from the fact that a certain uncountable chordal graph~$G^*$ does not admit a \td\ into cliques, as Diestel showed in~\cite[Paragraphs after Theorem 1.4]{CounterexampleUncountableChordalAdmitTD}.
A tree~$T$ at a vertex $r$ naturally induces a partial order, the \defn{tree-order $\leq_T$}, $u \leq_T v$ on its vertices by $u$ lies on the unique $r$--$v$ path $rTv$ in $T$. 

\begin{example} \label{ex:CountableNoMaxCliques}
    Let $T_2$ be the infinite binary tree with root $r$, i.e.\ the root $r$ has degree $2$ and every other vertex of $T_2$ has degree $3$.
    We obtain a chordal graph $G$ from $T_2$ by adding for each two $\leq_{T_2}$-comparable vertices $u \leq v$ of $T_2$ the edge $uv$ to $G$.
    Then $G$ admits a (canonical) \td\ into cliques, but it does not admit one into its maximal cliques.
\end{example}

\begin{proof}
    For a \td\ of $G$ into cliques, we choose $T = T_2$ as the decomposition graph and for each $t \in T$, we let $V_t = \{v \in G : v \le_{T_2} t\}$, the vertex set of the unique $r$--$t$ path in $T_2$.
    It can be easily checked that this is indeed a \td , and its bags are cliques by the definition of $G$ from $T_2$.

    Suppose for a contradiction that $G$ admits a \td , $(T,\cV)$ into maximal cliques.
    Let $G^*$ be the graph obtained from $G$ by adding to each ray $R$ in the tree $T_2$ starting from $r$ a vertex $v_R$ and an edge~$uv_R$ to each vertex $u$ of $R$.
    Note that the maximal cliques of $G$ are precisely the vertex sets of the rays in~$T_2$ starting from $r$.
    Thus, for each such ray $R$, we add $v_R$ to the bag $V_{t_R}$ to obtain a \td\ $(T,\cV')$ of $G^*$ which is not only into cliques but even into maximal cliques.
    This contradicts that $G^*$ does not admit a \td\ into cliques, as Diestel showed in~\cite[Paragraphs after Theorem 1.4]{CounterexampleUncountableChordalAdmitTD}.
\end{proof}

\printbibliography

\end{document}